\newtheorem{theorem}{Theorem}[section]
\newtheorem{rem}[theorem]{Remark}
\newtheorem{ex}[theorem]{Example}
\newtheorem{defn}[theorem]{Definition}
\newtheorem{lemma}[theorem]{Lemma}
\newtheorem{proposition}[theorem]{Proposition}
\newtheorem{conjecture}[theorem]{Conjecture}
\newtheorem{corollary}[theorem]{Corollary}
\def\Ricci{\mathop{\rm Ricci}\nolimits}
\def\Vol{\mathop{\rm Vol}\nolimits}
\def\Id{\mathop{\rm Id}\nolimits}
\def\Supp{\mathop{\rm Supp}\nolimits}
\def\Aut{\mathop{\rm Aut}\nolimits}
\def\rk{\mathop{\rm rk}\nolimits}
\def\dbar{\overline\partial}
\def\ddbar{\partial\overline\partial}
\def\cO{{\mathcal O}}
\def\cJ{{\mathcal J}}
\def\cA{{\mathcal A}}
\def\cR{{\mathcal R}}
\def\cP{{\mathcal P}}
\def\cF{{\mathcal F}}
\def\cY{{\mathcal Y}}
\def\cD{{\mathcal D}}
\let\ol=\overline
\let\ep=\varepsilon
\let\wh=\widehat
\def\bC{{\mathbb C}}
\def\bR{{\mathbb R}}
\def\bD{{\mathbb D}}
\def\bG{{\mathbb G}}
\def\bP{{\mathbb P}}
\def\bB{{\mathbb B}}
\begin{document}
\title[Parabolic Riemann]{Value Distribution Theory for \\
Parabolic Riemann Surfaces}

\author{Mihai P\u aun, Nessim Sibony}

\address{Mihai P\u aun \endgraf
Korea Institute for Advanced Study, \endgraf
Seoul, 130-722
South KOREA}
\email{paun@kias.re.kr}

\address{Nessim Sibony \endgraf   
 Univ. Paris-Sud, Universit\'e Paris-Saclay, 91405 Orsay,
France\endgraf 
and\endgraf 
Korea Institute for Advanced Study, \endgraf
Seoul, 130-722
South KOREA}
\email{nessim.sibony@math.u-psud.fr}

\maketitle

\begin{abstract}  A conjecture by Green-Griffiths states that if $X$ is a projective manifold of general type, then there exists an algebraic proper subvariety of $X$ which contains the image of all holomorphic curves from the complex plane to $X$. To our knowledge, the general case is far from being settled. We question here the choice of the complex plane as a source space.

Let ${\mathcal Y}$ be a parabolic Riemann surface, i.e bounded subharmonic functions defined on 
${\mathcal Y}$ are constant. The results of Nevanlinna's theory for holomorphic 
maps $f$ from ${\mathcal Y}$ to the projective line are parallel to the classical case when ${\mathcal Y}$ is the complex line except for a term involving a weighted Euler characteristic. Parabolic Riemann surfaces could be hyperbolic in the Kobayashi sense.

Let $X$ be a manifold of general type, and let $A$ be an ample line bundle on $X$. It is known that there exists a holomorphic jet differential $P$ (of order $k$) with values in the dual of A. If the map $f$ has infinite area and if ${\mathcal Y}$ has finite Euler characteristic, then $f$ satisfies the differential relation induced by $P$. As a consequence, we obtain a generalization of Bloch Theorem concerning the Zariski closure of maps $f$ with values in a complex torus.
An interesting corollary of these techniques is a refined Ax-Lindemann theorem to transcendental affine varieties (the classical case concerns affine algebraic varieties) for which we give a proof.
We then study the degree of Nevanlinna's currents $T[f]$ associated to a parabolic leaf of a foliation ${\mathcal F}$ by Riemann surfaces on a compact complex manifold. We show that the degree of $T[f]$ on the tangent bundle of the foliation is bounded from below in terms of the counting function of $f$ with respect to the singularities of ${\mathcal F}$, and the Euler characteristic of 
${\mathcal Y}$. In the case of complex surfaces of general type, we obtain a complete analogue of McQuillan's result: a parabolic curve of infinite area and finite Euler characteristic tangent to ${\mathcal F}$ is not Zariski dense.That requires some analysis of the dynamics of foliations by Riemann Surfaces.

\end{abstract}

\tableofcontents

\section{Introduction}

 Let $X$ be a compact complex manifold. S. Kobayashi 
 introduced a pseudo-distance, determined by
 the complex structure of $X$. We recall here its infinitesimal version,
 cf. \cite{Koba}.
 
Given a point $x\in X$ and a tangent vector $\displaystyle v\in T_{X, x}$ 
at $X$ in $x$, the length of $v$  with respect to the \emph{Kobayashi-Royden
pseudo-metric} is the following quantity

$${\bf k}_{X, x}(v):= \inf\{ \lambda> 0 ; \exists f: {\mathbb D}\to X, 
f(0)= x, \lambda
f^\prime(0)= v\},$$
where ${\mathbb D}\subset {\mathbb C}$ is the unit disk, and $f$ is a holomorphic map.
\smallskip

\noindent We remark that it may very well happen that 
${\bf k}_{X, x}(v)= 0$; however, thanks to Brody re-parametrization lemma,
this situation has a geometric counterpart, as follows. If there exists a couple
$(x, v)$ as above such that $v\neq 0$ and such that ${\bf k}_{X, x}(v)=  0$, then one can construct a holomorphic 
non-constant map $f : {\mathbb C}\to X$. The point $x$ is not necessarily in 
the image of $f$.

In conclusion, if any entire curve drawn on $X$ is constant, then 
the pseudo-distance defined above is a distance, and 
we say that $X$ is \emph{Brody hyperbolic}, 
or simply \emph{hyperbolic} (since
most of the time we will be concerned with compact manifolds). 
 \smallskip
 
 \noindent As a starting point for the questions with which we will be concerned with in this article, 
 we have the following result.
 
 \begin{proposition} \label{p1}
   Let $\Omega\subset X$ be a Kobayashi hyperbolic
   open set, which is hyperbolically embedded in a
   compact complex manifold $X$. Let ${\mathcal C}$ be a Riemann surface. 
Let $E\subset {\mathcal C}$ be a closed, countable set. Then any holomorphic map 
$f: {\mathcal C}\setminus E\to \Omega$ admits a (holomorphic) extension
$\widetilde f: {\mathcal C}\to X$.
\end{proposition}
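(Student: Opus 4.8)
\medskip
\noindent\textit{Sketch of a proof.}
The statement is local near the points of $E$, and the plan is to prove the extension across one point first and then to propagate it over all of $E$ by a Cantor--Bendixson induction. Since $E$ is countable it has empty interior, so ${\mathcal C}\setminus E$ is dense in ${\mathcal C}$; hence any two holomorphic extensions of $f$ to a connected open set agree, and there is a well-defined \emph{maximal} open set $\Omega\supseteq{\mathcal C}\setminus E$ carrying a holomorphic extension $\wt f\colon\Omega\to X$ of $f$. Setting $F:={\mathcal C}\setminus\Omega$, this is a closed subset of ${\mathcal C}$ contained in $E$, hence countable, and the proposition amounts to showing $F=\emptyset$.

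Suppose $F\neq\emptyset$. Being closed in the locally compact Hausdorff space ${\mathcal C}$, the set $F$ is itself locally compact Hausdorff, hence a Baire space; since $F$ is countable, were it to have no isolated point it would be the countable union of its singletons, each closed and of empty interior in $F$, contradicting the Baire property. So $F$ has an isolated point $p$. Choosing a coordinate disk $U\ni p$ with $U\cap F=\{p\}$, the map $\wt f$ is defined and holomorphic on the punctured disk $U\setminus\{p\}\cong\bD^*$, and it remains only to show that it extends holomorphically across $p$: this would force $p\in\Omega$, contradicting the maximality of $\Omega$, and finish the proof.

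For this last step I would invoke the big Picard theorem for compact hyperbolic manifolds (Kwack, Kobayashi; cf.\ \cite{Koba}), which is the only place where the hypothesis on $X$ is used. The mechanism is as follows: a compact hyperbolic manifold is complete hyperbolic, hence hyperbolically embedded in itself, so a holomorphic map $g\colon\bD^*\to X$ is distance-decreasing from the Poincar\'e metric of $\bD^*$ to the Kobayashi metric of $X$. Since the circle $\{|z|=r\}$ has Poincar\'e length of order $1/\log(1/r)\to0$ as $r\to0$, the images $g(\{|z|=r\})$ shrink to a point in $X$; picking $z_n\to0$, compactness of $X$ yields a subsequential limit $g(z_n)\to q\in X$. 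A standard normal-families/barrier argument (Kwack) then shows that $g$ maps an entire punctured neighborhood of $p$ into a coordinate ball around $q$, where the components of $g$ are bounded holomorphic functions; by Riemann's removable singularity theorem these extend across $p$, hence so does $g$.

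Everything but the last paragraph is soft point-set topology --- density of ${\mathcal C}\setminus E$, uniqueness and patching of the extensions, and the Cantor--Bendixson reduction to a single isolated puncture --- so the real content, and the main obstacle, is the one-puncture extension. That is precisely where one needs $X$ to be not merely Brody hyperbolic but complete hyperbolic, which is why the (customary) compactness assumption enters; without it this step, and the proposition itself, may fail.
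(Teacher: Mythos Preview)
Your proof is correct. The reduction to a single isolated puncture is the same as the paper's (the paper states more tersely that a non-empty closed countable set has an isolated point, which is exactly your Baire/Cantor--Bendixson step), but for the one-puncture extension you take a different route. You invoke Kwack's big Picard theorem directly: the Poincar\'e length of the circles $\{|z|=r\}$ tends to zero, so by distance-decreasing the images shrink, and a normal-families argument gives the extension. The paper instead uses the distance-decreasing property to obtain the pointwise derivative bound $|f'(t)|^2_{g_{\mathcal P}}\leq |t|^{-2}(\log|t|^2)^{-2}$, deduces that the graph $\Gamma_f\subset\bD^\star\times X$ has finite area, extends $[\Gamma_f]$ across $\{0\}\times X$ by the Bishop--Skoda theorem, and finally checks that the fiber over $0$ is a single point. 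Your approach is the classical textbook one and is arguably cleaner; the paper's approach is chosen because it foreshadows the current-theoretic and area-growth techniques (Skoda--El~Mir extension, Nevanlinna currents) that are used throughout the rest of the article, in particular in Corollary~\ref{c2}.
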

In particular, in the case of the complex plane we infer that any holomorphic map $f: {\mathbb C}\setminus E\to \Omega$
must be the restriction of an application $\widetilde f: \bP^1\to X$ (under the hypothesis of Proposition 1.1). We will give a proof and discuss some related statements and
questions in the first paragraph of this paper. Observe however that if the cardinal of $E$ is at least 2, then $\bC\setminus E$ is Kobayashi hyperbolic.

Our next remark is that the surface ${\mathbb C}\setminus E$ is a particular case of a 
{\sl parabolic Riemann surface}; we recall here the definition.
A Riemann surface ${\mathcal Y}$ is parabolic if any bounded subharmonic
 function defined on ${\mathcal Y}$ is constant. This is a large class of surfaces, including e.g.  $Y\setminus \Lambda$, where $Y$ is a compact Riemann surface of arbitrary genus and
 $\Lambda \subset Y$ 
 is any closed polar set. It is known
 (cf. \cite{Ahlfors}, \cite{Stoll}, page 80) that a 
 non-compact Riemann surface $\mathcal Y$ is \emph{parabolic} if and only if it admits a smooth exhaustion function 
 $$\sigma: {\mathcal Y}\to [1, \infty[$$ 
 such that: 
 
 
 \noindent $\bullet$ $\tau:= \log \sigma $ is harmonic in the complement of a compact set of 
 ${\mathcal Y}$. Moreover, we impose the normalization 
 $$\int_{\cY}dd^c\log \sigma
= 1,\leqno(1)$$ 
where the operator $d^c$ is defined as follows
$$d^c:= \frac{\sqrt{-1}}{4\pi}(\overline \partial- \partial).$$
 \medskip
 

On the boundary $S(r):= (\sigma =r)$ of the parabolic ball of radius $r$ we have the induced measure
$$d\mu_r:= d^c\log \sigma|_{S(r)}.$$
The measure $d\mu_r$ has total mass equal to 1, by the relation (1) combined with 
Stokes formula.

Since we are dealing with general parabolic surfaces, the growth of the Euler characteristic of the balls
$\bB(r)= (\sigma< r)$ will appear very often in our estimates. We introduce the following notion.

\begin{defn} Let $(\cY, \sigma)$ be a parabolic Riemann surface, together with an exhaustion function
as above. For each $t\geq 1$ 
such that $S(t)$ is non-singular we denote
by $\chi_\sigma(t)$ the Euler characteristic of the domain $\bB(t)$, and let
$$\mathfrak{X}_\sigma (r):= \int_1^r\big|\chi_\sigma(t)\big|\frac{dt}{t}$$
be the (weighted) mean Euler characteristic of the ball of radius $r$.
\end{defn}

If 
$\cY= \bC$, then $\mathfrak{X}_\sigma (r)$ is bounded by $\log r$. The same type of bound is verified if $\cY$ is the complement of a finite number of points in $\bC$. If $\cY= \bC\setminus E$ where $E$ is a closed polar set of infinite cardinality, then things are more subtle, depending on the density of the distribution of the points of $E$
in the complex plane. However,
an immediate observation is that the surface $\cY$ has finite Euler characteristic if and only if 
$$\mathfrak{X}_\sigma (r)= \cO(\log r).\leqno(2)$$


\medskip


\noindent In the first part of this article we will extend a few classical results in hyperbolicity theory
to the context of parabolic 
Riemann surfaces, as follows.

We will review  
the so-called ``first main theorem" and the logarithmic derivative lemma 
for maps $f:\cY\to X$, where $X$ is a compact complex manifold. We also give a version of the first main theorem 
with respect to an ideal $\cJ\subset \cO_X$. This will be a convenient language when studying foliations with singularities.

\noindent As a consequence, we derive a vanishing result for jet differentials, similar to
the one obtained in case $\cY= \bC$, as follows.

Let $\mathcal P$ be a jet differential of order $k$ and degree $m$ on $X$, with values in the dual of an ample bundle (see \cite{Dem2}; we recall a few basic facts about this notion in the next section). 
Then we prove the following result.

\begin{theorem}\label{t1}
Let $\mathcal Y$ be a parabolic Riemann surface. We consider a holomorphic map
$f: \mathcal Y\to X$ such that we have
$$\lim\sup_{r\to \infty} \frac {\mathfrak{X}_\sigma (r)}{T_{f, \omega}(r)}= 0.
\leqno{(\dagger)}$$
Let $\cP$ be an invariant jet differential of order $k$ and degree $m$, with values in the dual of an ample line bundle. Then we have 
$$\mathcal P\left(j_k(f)\right)= 0$$
identically on 
$\mathcal Y$.\end{theorem}
For example, the requirement above is satisfied if $\cY$ has finite Euler characteristic and infinite area.
In the previous statement we denote by $j_k(f)$ the $k^{\rm th}$ jet associated to the map $f$.
If $\mathcal Y= \mathbb C$, then this result is well-known, starting with the seminal work of A. Bloch
cf. \cite{Bloch}; see also \cite{Dem1}, \cite{Siu1} and the references therein, in particular the work of T. Ochiai \cite{Och}, Green-Griffiths \cite{GrG} and Y. Kawamata \cite{Kawa}.
It is extremely useful in the investigation of the hyperbolicity properties of projective manifolds. 
In this context, the above result says that the vanishing result still holds in the context of Riemann surfaces of 
(possibly) infinite Euler characteristic, provided that the growth of this topological invariant is
slow when compared to $T_{f, \omega}(r)$. It also holds when the source is the unit disk, provided that the growth is large enough. 
\medskip

\noindent As a consequence of Theorem \ref{t1} we obtain the following result (see section 4, Corollary 4.6). 
Let $X$ be a projective manifold, and let $D= Y_1+\dots +Y_N$ 
be an effective snc (i.e. simple normal crossings) divisor. We assume that there exists a logarithmic 
jet differential $\cP$ on $(X, D)$ with values in a bundle $A^{-1}$, where $A$ is ample. 
Let $f:\bC\to X$ be an entire curve which do not satisfies the differential equation defined by $\cP$. 
Then we obtain a lower bound for the number of intersection points of $f(\bD_r)$ with $D$ as $r\to\infty$, where 
$\bD_r\subset \bC$ is the disk of radius $r$.

\medskip

\noindent Concerning the existence of jet differentials, we recall Theorem 0.1 in \cite{Dem3}, see also 
\cite{Merker}.

\begin{theorem}
\label{t2}
Let $X$ be a manifold of general type. Then there is a couple of integers
$m\gg k\gg 0$ and a (non-zero) holomorphic invariant
jet differential $\mathcal P$ of order $k$ and degree $m$ with values in the dual of an ample line bundle
$A$.
\end{theorem}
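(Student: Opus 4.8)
The standard proof, due to Demailly, goes through the holomorphic Morse inequalities on the jet tower, and I would follow that route. Write $n=\dim X$, and recall the Demailly tower of directed manifolds
$$X=X_0\xleftarrow{\pi_1}X_1\xleftarrow{\pi_2}\cdots\xleftarrow{\pi_k}X_k,$$
where each $\pi_{j}\colon X_{j}\to X_{j-1}$ is a $\bP^{n-1}$-bundle, so that $X_k$ is a smooth projective variety of dimension $N:=n+k(n-1)$ carrying the tautological line bundle $\cO_{X_k}(1)$; write $\pi_{k}\colon X_k\to X$ for the composition. The input I would take for granted is the direct image identity $(\pi_{k})_{*}\cO_{X_k}(m)=E_{k,m}T_X^{*}$, the sheaf of invariant jet differentials of order $k$ and weighted degree $m$, which together with the projection formula yields a canonical isomorphism
$$H^{0}\big(X,\,E_{k,m}T_X^{*}\ot A^{-1}\big)\;\simeq\;H^{0}\big(X_k,\,\cO_{X_k}(m)\ot\pi_{k}^{*}A^{-1}\big).$$
Hence it suffices to exhibit, for $m\gg k\gg 0$, a single nonzero section of $L_{m}:=\cO_{X_k}(m)\ot\pi_{k}^{*}A^{-1}$ on $X_k$.

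To produce such a section I would invoke the algebraic holomorphic Morse inequalities on the smooth projective $N$-fold $X_k$. Fix a smooth Hermitian metric on $T_X$; Demailly's inductive procedure then equips $\cO_{X_k}(1)$ with a metric whose curvature form $\omega_k$ is expressed through the pullbacks of the tautological curvatures of the intermediate levels and of the curvature of $T_X$, and twisting by $\pi_{k}^{*}A^{-1}$ perturbs $m\,\omega_k$ only by a fixed smooth form representing $-\pi_{k}^{*}c_1(A)$, hence affects only terms of order $<N$ in $m$. The Morse inequality then gives
$$h^{0}\big(X_k,L_{m}\big)\;\ge\;\frac{m^{N}}{N!}\int_{X_k(\le 1,\,\omega_k)}\omega_k^{\,N}\;-\;o\big(m^{N}\big),$$
where the integral is over the open set on which $\omega_k$ has at most one negative eigenvalue. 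Everything is thus reduced to showing that this \emph{Morse integral} is strictly positive once $k$ is large; in fact I would prove the slightly stronger positivity with $\cO_{X_k}(m)\ot\pi_{k}^{*}A^{-\lceil\varepsilon m\rceil}$ in place of $L_m$, from which the statement for $A^{-1}$ follows since $A$ is ample, so $A^{\lceil\varepsilon m\rceil-1}$ is globally generated for $m\gg 0$.

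The positivity of the Morse integral is the heart of the matter and the only place where the hypothesis that $X$ is of general type -- equivalently that $K_X$ is big, $\Vol(K_X)>0$ -- enters. Here I would carry out Demailly's change of variables: expanding $\omega_k^{N}$ and integrating successively along the $\bP^{n-1}$-fibres of the tower turns the Morse integral into the integral over $X$ of a universal polynomial in the Chern forms of $T_X$; averaging the resulting symmetric expression over the standard simplex in the weight variables produces, as dominant contribution, a strictly positive combinatorial constant $c(n,k)$ -- of size comparable to $(\log k)^{n}/\big(n!\,(k!)^{n}\big)$ -- times a quantity which, after replacing the smooth metric on $K_X$ by a singular one with positive curvature current (available precisely because $K_X$ is big), is bounded below by $\Vol(K_X)$ up to an error controlled by the remaining Chern numbers of $X$ with a coefficient that is $o(1)$ relative to $c(n,k)$ as $k\to\infty$. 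Therefore, for $k$ large the Morse integral exceeds $\tfrac12\,c(n,k)\,\Vol(K_X)>0$, and then the displayed inequality forces $h^{0}(X_k,L_{m})>0$ for $m\gg 0$, which is the assertion. The main obstacle is precisely this estimate: performing the curvature computation on the tower and extracting a genuinely positive leading term in the presence of the many negative contributions coming from the relative tangent bundles of the fibrations $X_j\to X_{j-1}$. An alternative to this last step, replacing holomorphic Morse inequalities by equivariant Riemann--Roch and a Schur-functor analysis on the Demailly--Semple tower, is carried out by Merker and could be substituted here.
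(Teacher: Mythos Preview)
The paper does not prove this statement; it is explicitly \emph{recalled} from \cite{Dem3} (with \cite{Merker} mentioned as an alternative), so there is no in-paper argument to compare against. Your outline is a faithful sketch of Demailly's proof in \cite{Dem3}: the reduction via $(\pi_k)_*\cO_{X_k}(m)=E_{k,m}T_X^\star$ to producing sections of $\cO_{X_k}(m)\otimes\pi_k^\star A^{-1}$, the strong holomorphic Morse inequality on the $N$-fold $X_k$, and the asymptotic computation of the Morse integral via fibre integration over the tower---yielding a leading term comparable to $(\log k)^n/(k!)^n$ times $\Vol(K_X)$, with the general type hypothesis entering exactly through $\Vol(K_X)>0$---are all the actual ingredients of \cite{Dem3}. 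Your mention of Merker's alternative via equivariant cohomology also matches the paper's second citation. In short, you have correctly identified and summarized the external proof the paper defers to.
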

Thus, our result \ref{t1} can be used in the context of the 
general type manifolds. 
  
\medskip

\noindent As a consequence of Theorem \ref{t1}, we obtain the following analogue of Bloch's theorem. It does not seem to be possible to derive this result by using e.g. Ahlfors-Schwarz negative curvature arguments. Observe also that we cannot use a Brody-Green type argument, because the Brody reparametrization lemma is not available in 
our context.  
\begin{theorem}
\label{bloch}
Let ${\mathbb C}^N/\Lambda$ be a complex torus, and let $\cY$ be a parabolic 
Riemann surface. We consider a holomorphic map $$f: \cY\to {\mathbb C}^N/\Lambda$$ which verifies the condition $(\dagger)$.
Then the smallest analytic subset $X$
containing the closure of the image of $f$ 
is either the translate of a sub-torus in 
${\mathbb C}^N/\Lambda$, or there exists
a map $\cR: X\to W$ onto a general type subvariety of an abelian variety $W\subset A$ such that the area of the curve $\cR\circ f$ is finite.
\end{theorem} 

\medskip

\noindent In the second part of this paper our aim is to recast some of the work of M.~McQuillan and M. Brunella concerning the Green-Griffiths conjecture in the parabolic setting. We first recall 
the statement of this problem.

\begin{conjecture}{\rm (\cite{GrG})}
Let $X$ be a projective manifold of general type. Then there exists an algebraic subvariety 
$W\subsetneq X$ which contains the image of all holomorphic curves $f: \bC\to X$. 
\end{conjecture}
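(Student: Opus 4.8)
The plan is to run the ``jet differential $+$ foliation'' strategy, which is what the second part of this paper transports to the parabolic setting and which is currently complete only for $\dim X=2$.

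\emph{Step 1: producing enough jet differentials.} By Theorem~\ref{t2} there is a non-zero holomorphic jet differential $\mathcal P$ of some order $k$ and degree $m$ on $X$ with values in $A^{-1}$, $A$ ample. Taking symmetric and tensor powers of $\mathcal P$ and twisting down by $A$, and using that the general-type hypothesis propagates under these operations, one obtains for $m\gg k\gg 0$ a whole graded family of jet differentials with values in negative multiples of $A$; equivalently, the Green--Griffiths jet bundle $E^{GG}_{k,m}T_X^\ast$ is big. Let $Z_k\subseteq X$ be the image in $X$ of the base locus of this linear system inside the bundle of $k$-jets (the Demailly--Semple locus of $X$).

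\emph{Step 2: algebraic degeneracy of entire curves.} Given an entire curve $f:\bC\to X$ which is \emph{not} contained in a fixed proper algebraic subvariety, $f$ is in particular transcendental, so $T_{f,\omega}(r)/\log r\to\infty$; since $\mathfrak X_\sigma(r)=\cO(\log r)$ for $\cY=\bC$, the hypothesis of Theorem~\ref{t1} is satisfied (rational curves are harmless, their image being algebraic). Theorem~\ref{t1} then gives $\mathcal P(j_k(f))\equiv 0$ for every jet differential $\mathcal P$ valued in a negative power of $A$, whence $f(\bC)\subseteq Z_k$. The first, and conjectural, obstacle appears here: one must know $Z_k\subsetneq X$, i.e. that the jet differentials do not have a common vertical zero over every point of $X$. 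For surfaces this follows from a Bogomolov--Riemann--Roch computation on the Semple tower (the base locus is then a proper curve), but in general the required positivity of $E^{GG}_{k,m}T_X^\ast$ modulo its vertical base locus is unknown.

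\emph{Step 3: dynamics of the associated foliation (case $\dim X=2$).} On a suitable modification of the $k$-jet tower, the relation $\mathcal P(j_k(f))\equiv 0$ cuts out a foliation $\mathcal F$ by Riemann surfaces having $f$ as (the lift of) a leaf. One then studies the Nevanlinna/Ahlfors current $T[f]$ attached to this parabolic leaf, as in the second part of the paper: the degree estimate announced in the abstract bounds $T[f]\cdot T_{\mathcal F}$ from below by a counting term along $\mathrm{Sing}(\mathcal F)$ minus a multiple of $\mathfrak X_\sigma(r)$, hence by a non-negative quantity in the limit when $\cY=\bC$. Combining this with the tautological inequality for $T_{\mathcal F}^\ast$ and with Brunella's classification of the relative position of $T_{\mathcal F}$ and $K_X$ for foliated surfaces of general type yields a contradiction with $f$ being Zariski dense; so the leaf lies in an $\mathcal F$-invariant algebraic curve. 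By Noetherianity there are finitely many maximal such invariant curves; taking $W$ to be the union of $Z_k$ with these curves (and with the exceptional divisor of the modification pushed to $X$) produces the required $W\subsetneq X$ containing all entire curves.

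\emph{Main obstacle.} Everything beyond Step~1 is open in dimension $\geq 3$. One does not control the Demailly--Semple base locus $Z_k$ (positivity of the jet bundle modulo its vertical base locus), and even granting $Z_k\subsetneq X$, the object replacing $\mathcal F$ is a higher-codimensional distribution on the Semple tower whose dynamics — the analogue of McQuillan's parabolic trick and of Brunella's classification — is not available. The contribution here is to show that, when the source is a parabolic Riemann surface of finite Euler characteristic and $X$ is a surface, the full argument goes through; the general Green--Griffiths conjecture as stated remains, to the authors' knowledge, out of reach.
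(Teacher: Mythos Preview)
This statement is presented in the paper as a \emph{conjecture}, not a theorem; the paper offers no proof, and immediately after stating it remarks that ``it is hard to believe that this conjecture is correct for manifolds $X$ of dimension $\geq 3$.'' There is therefore no proof in the paper to compare your proposal against.

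Your proposal is not a proof either, and you correctly say so at the end. What you have written is an outline of the known strategy together with its obstructions (control of the base locus $Z_k$, and the absence of a higher-dimensional McQuillan--Brunella theory), which is accurate in spirit and matches the paper's own program: the paper's contribution (Theorem~\ref{c1}) is precisely your Step~3 in the parabolic setting, under the hypothesis that the foliation is already given on the surface $X$.

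Two places where your sketch overstates what is known. First, in Step~2 you assert that for surfaces the base locus $Z_k$ is a proper curve ``by a Bogomolov--Riemann--Roch computation''; this is only established under extra numerical hypotheses (e.g.\ $c_1^2>c_2$, via Bogomolov's symmetric differentials), not for arbitrary surfaces of general type --- the conjecture is open even for surfaces. Second, in Step~3 the passage from ``each Zariski-dense $f$ yields a contradiction'' to a \emph{single} algebraic $W$ is not obtained by ``finitely many maximal invariant curves by Noetherianity'': a foliation can have infinitely many algebraic leaves. The uniform $W$ in McQuillan's argument comes from the fact that the zero locus of the jet differential inside the jet tower is a fixed algebraic set and one descends from its components; the paper's Theorem~\ref{c1}, which assumes the foliation on $X$ as input, does not by itself produce that uniform $W$.
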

\noindent It is hard to believe that this conjecture is correct for 
manifolds $X$ of dimension $\geq 3$. On the other hand, it is very likely that this holds true for
surfaces (i.e. $\dim X= 2$), on the behalf of the results available in this case.
\smallskip
   
Given a map $f:\cY\to X$ defined on a parabolic Riemann surface $\mathcal Y$,
we can associate a Nevanlinna-type closed positive current $T[f]$. If $X$ is a surface of general type and if $\cY$ has finite Euler characteristic, then there exists an integer 
$k$ such that the $k$-jet of $f$ satisfies an algebraic relation. As a consequence, there exists a 
foliation $\cF$ by Riemann surfaces on the space of $k$-jets $X_k$ 
of $X_0$, such that the lift of $f$
is tangent to $\cF$. In conclusion we are naturally led to consider the 
pairs $(X, \cF)$, where $X$ is a compact manifold, and
$\cF$ is a foliation by curves on $X$. We denote by $T_{\cF}$ the 
so-called tangent bundle of $\cF$.

We derive a lower bound of the intersection 
number $\displaystyle \int_XT[f]\wedge c_1(T_{\cF})$ in terms of 
a Nevanlinna-type counting function of the intersection of $f$ with the
singular points of $\cF$.  
As a consequence, if $X$ is a complex surface and $\cF$ has reduced singularities,
we show that $\displaystyle \int_XT[f]\wedge c_1(T_{\cF})\geq 0$. For this part we follow closely the original argument of \cite{McQ}. 

When combined with a result by Y. Miyaoka, the preceding inequality shows that the classes
$\{T[f]\}$ and $c_1(T_\cF)$ are orthogonal. Since the class of the current $T[f]$ is nef, we show 
by a direct argument that we have
$\displaystyle \int_X\{T[f]\}^2= 0$, and from this we infer that the Lelong numbers of the diffuse part $R$ of $T[f]$ are equal to zero at each point of $X$.

This regularity property of $R$ is crucial, since it allows to show --via the Baum-Bott formula
and an elementary fact from dynamics--
that we have 
$\displaystyle \int_XT[f]\wedge c_1(N_{\cF})\geq 0,$ where $N_{\cF}$ is the normal 
bundle of the foliation, and $c_1(N_{\cF})$ is the first Chern class of $N_{\cF}$.  
\medskip

\noindent We then obtain the next result, in the spirit of \cite{McQ}.
\begin{theorem}\label{c1}
Let $X$ be a surface of general type, and consider a holomorphic map $f:\cY\to X$, where $\cY$ is a parabolic 
Riemann surface sich that $\mathfrak{X}_\sigma (r)= o(T_f(r))$. 
We assume that $f$ is tangent to a holomorphic foliation $\cF$; then the dimension of the Zariski 
closure of $f(\cY)$ is at most 1.
\end{theorem}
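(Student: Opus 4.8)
The plan is to argue by contradiction along the lines of McQuillan and Brunella, systematically replacing the classical error terms by the weighted Euler characteristic $\mathfrak{X}_\sigma(r)$. Assume the Zariski closure of $f(\cY)$ has dimension $2$, i.e. $f$ is Zariski dense. We may assume that $f$ has infinite area: if $\int_\cY f^*\omega<\infty$ then, by Bishop's theorem on analytic sets of finite volume, the closure of $f(\cY)$ is a compact, hence algebraic, curve, contradicting Zariski density. After finitely many blow-ups (Seidenberg's reduction of singularities) we may also assume that $\cF$ has only reduced singularities; the lift of $f$ is still a morphism (a rational map from a smooth curve is a morphism), it stays tangent to the transformed foliation with infinite area and the same $\mathfrak{X}_\sigma$, and $X$ stays of general type. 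We then attach to $f$ its Nevanlinna (Ahlfors) current $T[f]$, the weak limit along a suitable sequence $r_k\to\infty$ of the normalised averages $\frac{1}{T_{f,\omega}(r_k)}\int_1^{r_k}\!\big(f_*[\bB(t)]\big)\frac{dt}{t}$. It is a closed positive $(1,1)$-current; its class is nonzero since $\{T[f]\}\cdot\{\omega\}=1$, and it is nef, because for every curve $C\subset X$ Zariski density gives $f(\cY)\not\subset C$ and the First Main Theorem yields $\{T[f]\}\cdot c_1(\OO(C))=\lim_k\big(N_f(r_k,C)+m_f(r_k,C)+\cO(1)\big)/T_{f,\omega}(r_k)\ge 0$. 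Finally, infinite area forces $T_{f,\omega}(r)/\log r\to\infty$ while finite Euler characteristic gives $\mathfrak{X}_\sigma(r)=\cO(\log r)$, so $\mathfrak{X}_\sigma(r)/T_{f,\omega}(r)\to 0$; this is what renders all Euler-characteristic error terms below harmless.

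First I would establish the \emph{tautological inequality}. Since $f$ is tangent to $\cF$, its differential defines a holomorphic section of $f^*T_\cF\ot T_\cY^*$ whose zero divisor is effective and contains the ramification of $f$ together with the (necessarily ramified) points of $f^{-1}(\mathrm{Sing}\,\cF)$. Applying the First Main Theorem and the logarithmic derivative lemma of the first part of the paper to this section, and estimating the contribution of $T_\cY^*$ by $\mathfrak{X}_\sigma(r)$, one obtains $T_{f,T_\cF}(r)\ge -\cO(\mathfrak{X}_\sigma(r))+\cO(1)$, the right-hand side being improvable by the non-negative counting functions of the ramification and of $f^{-1}(\mathrm{Sing}\,\cF)$. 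Dividing by $T_{f,\omega}(r_k)$ and letting $k\to\infty$ gives $\{T[f]\}\cdot c_1(T_\cF)\ge 0$. On the other hand, $X$ is of general type, hence not uniruled, so Miyaoka's semipositivity theorem says that $T_\cF^*$ is pseudoeffective; pairing this pseudoeffective class with the nef class $\{T[f]\}$ gives $\{T[f]\}\cdot c_1(T_\cF)\le 0$. Therefore $\{T[f]\}\cdot c_1(T_\cF)=0$. A direct argument in the spirit of McQuillan — exploiting that, modulo its divisorial part, $T[f]$ is a closed positive current diffusely spread along $f(\cY)$ — then yields $\{T[f]\}^2=0$; combined with Demailly's regularisation and the standard bound on the Lelong numbers of a nef class in terms of its self-intersection, the diffuse part $R$ in the Siu decomposition of $T[f]$ has Lelong number $0$ at every point of $X$.

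Next comes the decisive estimate on the normal bundle $N_\cF$. By the Baum--Bott formula $c_1(N_\cF)^2=\sum_{p\in\mathrm{Sing}\,\cF}\mathrm{BB}(\cF,p)$, and near each reduced singularity the leaf carrying $f(\cY)$ can accumulate only along the local separatrices, the two eigen-directions being comparable (the elementary dynamics of a simple singularity, read off from its Poincar\'e--Dulac or saddle-node normal form). Since $R$ has vanishing Lelong numbers it carries no mass at the points of $\mathrm{Sing}\,\cF$, so a localisation of Baum--Bott tested against $R$, together with the non-negative contribution of the divisorial part of $T[f]$, gives $\{T[f]\}\cdot c_1(N_\cF)\ge 0$. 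Using the foliated adjunction $K_X=T_\cF^*\ot N_\cF^*$ we obtain
\[
\{T[f]\}\cdot c_1(K_X)=\{T[f]\}\cdot c_1(T_\cF^*)-\{T[f]\}\cdot c_1(N_\cF)=-\{T[f]\}\cdot c_1(N_\cF)\le 0 .
\]
But $X$ is of general type, so $K_X$ is big; passing to a minimal model, pushing $T[f]$ forward to a nonzero nef class and applying the Hodge index theorem against the nef and big canonical class there, that class must pair strictly positively with $K$, whence $\{T[f]\}\cdot c_1(K_X)>0$ by the projection formula and the effectivity of the exceptional locus — a contradiction. Therefore $f$ is not Zariski dense, i.e. the Zariski closure of $f(\cY)$ has dimension at most $1$.

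The step I expect to be the main obstacle is the normal-bundle inequality $\{T[f]\}\cdot c_1(N_\cF)\ge 0$ of the third paragraph. It is exactly here that McQuillan's fine analysis of the dynamics of a foliation by Riemann surfaces near its reduced singularities enters, and it functions only once the regularity of $R$ (vanishing Lelong numbers) from the second paragraph is available. Transplanting this analysis to an arbitrary parabolic source further requires checking that every estimate survives the extra $\mathfrak{X}_\sigma(r)$-terms — which it does precisely because $\cY$ has finite Euler characteristic, so that $\mathfrak{X}_\sigma(r)=\cO(\log r)=o\big(T_{f,\omega}(r)\big)$.
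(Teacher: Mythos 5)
Your proposal follows essentially the same route as the paper: tautological/tangent inequality plus Miyaoka's semipositivity to get $\{T[f]\}\cdot c_1(T_\cF)=0$, then $\{T[f]\}^2=0$ and vanishing Lelong numbers of the diffuse part, then the Baum--Bott/dynamics argument for $\{T[f]\}\cdot c_1(N_\cF)\ge 0$, and finally foliated adjunction against the big canonical class. The two steps you leave as black boxes are exactly the ones the paper works hardest on (it proves $\{T[f]\}^2=0$ via Yau's theorem and a curvature computation on the section of $T_X\otimes T_\cF^\star$, and handles degenerate reduced singularities via the L\'eau flower theorem, showing the diffuse part has no mass there), but you identify them correctly and the architecture is the same.
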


\smallskip

\noindent  In the last section of our survey we 
give a short proof of M. Brunella index theorem \cite{Brun1}. 
Furthermore, we show that 
that this important result admits the following generalization.
 
Let $L$ be a line bundle on a complex surface $X$, such that  
$S^mT^\star_X\otimes L$ has a non-identically zero section $u$. 
Let $f$ be a holomorphic map from a parabolic Riemann surface $\cY$ to $X$, directed by the multi-foliation $\cF$
defined by $u$, i.e. we have $u\left((f^{\prime})^{\otimes m}\right)= 0$
Then we show that we have
$$\int_Xc_1(L)\wedge T[f]\geq 0.$$
Brunella's theorem corresponds to the case $m=1$ and $L= N_{\cF}$: indeed, a
foliation on $X$ can be seen as a section of $T^\star_X\otimes N_{\cF}$
(or in a dual manner, as a section of $T_X\otimes T_{\cF}^\star$).

If $X$ is a minimal surface of general type, such that $c_1^2> c_2$, we see that
this implies Theorem \ref{c1} directly, i.e. without considering the $T[f]$-degree of 
the tangent of $\cF$. In particular, we do not need to invoke Miyaoka's generic 
semi-positivity theorem, nor the blow-up procedure of McQuillan.

\noindent It is a very interesting problem to generalize the inequality
above in the framework of higher order jet differentials, cf.\,section 7 for a precise statement.
\medskip




\section{Preliminaries} 

\medskip

\subsection{Motivation: an extension result}

\smallskip

\noindent We first give the proof of Proposition 1.1. We refer to \cite{Koba} for further results in this direction. 
\smallskip

\noindent {\sl Proof} (of Proposition 1.1) A first observation is that it is enough to deal with the
case where $E$ is a single point. 
Indeed, assume that this case is settled. We consider the set $E_0\subset E$ such that 
the map $f$ does not extends across $E_0$; our goal is to prove that we have
$E_0= \emptyset$. If this is not the case, then we remark that $E_0$ contains at least an
isolated point --since it is countable, closed and non-empty--, and thus we obtain a contradiction. 


Thus we can assume that we have a holomorphic map 
$$f: {\mathbb D}^\star\to \Omega$$
where ${\mathbb D}^\star$ is the pointed unit disc. Let $g_{\mathcal P}$ be the Kobayashi metric on $\Omega$; we remark that by hypothesis, $g_{\mathcal P}$ is non-degenerate. By the {\sl distance decreasing property} of this metric we infer that
$$\vert f^\prime(t)\vert^2_{g_{\mathcal P}}\leq \frac{1}{\vert t\vert ^2\log^2\vert t\vert ^2}$$
for any $t\in {\mathbb D}^\star$. On the other hand,
$\Omega$ is hyperbolically embedded in $X$, so (by definition of hyperbolic embedding) we have
$\displaystyle \vert f^\prime(t)\vert^2_{g_X}\leq C \vert f^\prime(t)\vert^2_{g_{\mathcal P}}$. Here $g_X$ is a metric on $X$ and $C$ is a positive constant (depending on the metric) independent of the point $t\in {\mathbb D}^\star$. 
We thus have
$$\vert f^\prime(t)\vert^2_{g_X}\leq \frac{C}{\vert t\vert ^2\log^2\vert t\vert ^2}.
\leqno(3)$$
This is a crucial information, since now we can argue as follows. The inequality 
(3) implies that the area of the graph associated to our map $\Gamma_f^0\subset \bD^\star \times X$
defined by
$$\Gamma_f^0:= \{(t, x)\in \bD^\star \times X : f(t)= x\}$$
is finite. By the theorem of Bishop-Skoda (cf. \cite{Sibony} and the references therein) this implies that there exists an analytic subset $\Gamma\subset \bD \times X$ whose restriction to $\bD^\star \times X$ is precisely $\Gamma_f^0$. Hence we infer that the fiber of the projection 
$\Gamma\to \bD$ on the second factor is a point. Indeed, if this is not the case,
then the area of the image (via $f$) of the disk of radius $\varepsilon$ is bounded from below by a constant independent of $\varepsilon> 0$. This of course cannot happen,
as one can see by integrating the inequality (3) over the disk of radius $\varepsilon$.

\qed

\medskip

\noindent In connection with this result, we recall the following conjecture proposed in \cite{Noguchi}.

\begin{conjecture} Let $X$ be a Kobayashi hyperbolic compact manifold of dimension $n$. 
We denote by ${\mathbb B}$ the unit ball in $\mathbb C^p$, and let $E$ be a closed pluripolar subset of 
${\mathbb B}$. Then any holomorphic map
$$f: {\mathbb B}\setminus E\to X$$
extends across $E$.

\end{conjecture}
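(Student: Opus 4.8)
The plan is to mimic the proof of Proposition 1.1 in several variables and to locate the point at which this genuinely fails. Working near a fixed point of $E$ in a small ball, let $E_0\subseteq E$ be the (closed, polar) set of points across which $f$ does \emph{not} extend; one wants $E_0=\emptyset$. Fix a hermitian metric $\omega$ on $X$. Since $X$ is compact and Kobayashi hyperbolic, Brody's reparametrisation lemma forces its Kobayashi--Royden metric $g_X$ to satisfy $g_X(v)^2\ge c\,|v|_\omega^2$ for some $c>0$ (a sequence of vectors violating this would yield a non-constant entire curve in $X$), and trivially $g_X(v)^2\le C\,|v|_\omega^2$. The distance decreasing property applied to discs in $\bB\setminus E$ gives $f^\star g_X\le g_{\bB\setminus E}$, hence $|f_\star v|_\omega^2\le c^{-1}|v|^2_{g_{\bB\setminus E}}$ for every tangent vector $v$ to $\bB\setminus E$. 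Everything reduces to: \textbf{(i)} an upper bound for the Kobayashi--Royden metric of $\bB\setminus E$ near $E$ good enough that the graph $\Gamma^0_f\subset(\bB\setminus E)\times X$ has locally finite $2p$-dimensional volume near $E\times X$; and \textbf{(ii)}, given (i), the extension of $\overline{\Gamma^0_f}$ to an analytic subset of $\bB\times X$, single valued over $E$. Step (ii) is run exactly as in Proposition 1.1: $[\Gamma^0_f]$ has locally finite mass near $E\times X$ by (i); its trivial extension across the (pluri)polar set $E\times X$ is closed, hence the integration current of an analytic set $\Gamma$ of pure dimension $p$; $\Gamma\to\bB$ is proper since $X$ is compact, so a fibre over a point of $E$ is finite, and it cannot contain two points, for otherwise integrating the estimate in (i) over discs transverse to $E$ would bound the $f$-area of small punctured balls from below, as in Proposition 1.1.

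When $p\ge 2$, step (i) is essentially automatic and trivialises the problem. A closed polar set has vanishing $(2p-2)$-dimensional Hausdorff measure, so its radial projection from any centre to $\bP^{p-1}$ has image of measure zero; hence through a full-measure set of points near $E$ and of directions passes a complex line whose trace on $\bB$ is a chord --- biholomorphic to $\bD$ --- disjoint from $E$. This keeps the Kobayashi--Royden metric of $\bB\setminus E$ under control across $E$ (informally: one-dimensional discs do not feel a thin set), so $f^\star\omega$ is locally bounded near $E$, the graph has locally finite volume, and $f$ extends by (ii) provided $E$ is pluripolar. One may even avoid currents here: $f$ then has locally bounded differential near $E$, whose complement is quasiconvex, so $f$ extends continuously and hence holomorphically by the removability of closed polar sets for continuous functions. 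Thus for $p\ge 2$ only the distinction polar vs.\ pluripolar remains; for $p=1$ this distinction disappears, subharmonic being plurisubharmonic, so $E\times X$ is pluripolar and (ii) is fine. The genuine content of the conjecture is therefore the failure of (i) for $p=1$.

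For $p=1$, if $E_0$ has an isolated point $e$, then $\bD\setminus E$ is a punctured disc near $e$, there $|f_\star v|_\omega^2\le c^{-1}|v|^2_{g_{\bD\setminus E}}\lesssim |t|^{-2}\log^{-2}|t|^2$, the graph has finite area near $\{e\}\times X$, and $f$ extends across $e$ exactly as in Proposition 1.1; iterating over isolated points settles every $E$ with $E_0$ discrete, in particular all countable $E$, recovering Proposition 1.1. When $E$ has no isolated point the natural attempt is to prove that the cluster set of $f$ at each $e\in E$ is a single point: take a subharmonic $v\le-1$ near $e$ with $E\subseteq\{v=-\infty\}$ as a barrier, produce in $\bD\setminus E$ loops around $e$ of hyperbolic length tending to $0$; by the distance decreasing property their $f$-images are $g_X$-short loops, and the hyperbolic embeddedness of $X$ then confines $f$ on a punctured neighbourhood of $e$ to arbitrarily small balls, forcing continuous --- hence holomorphic --- extension.

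The hard part is precisely this last construction of short loops. If $E$ is thick near $e$ --- e.g.\ a Hausdorff-dimension-zero Cantor set accumulating at $e$, which is still closed and polar --- a loop encircling $e$ in $\bD\setminus E$ must dodge infinitely many components of $E$, and each such ``gap'' contributes a definite amount both to its hyperbolic length and to the hyperbolic area of any neighbourhood of $e$ in $\bD\setminus E$, so neither quantity need be finite: the finite-area route to (i) collapses, while the ``peel off the isolated points'' induction is unavailable since $E_0$ may be perfect. What is missing is a uniform estimate for the conformal geometry of $\bD\setminus E$ near a polar set $E$ --- or a version of the Kwack--Nevanlinna argument insensitive to such clustering --- and this is what the conjecture really demands. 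Natural intermediate targets are the case of $E$ contained in a countable union of real-analytic arcs, where a monodromy argument applies, and the case of $\sigma$-finite $\mathcal H^0(E)$, which returns to the discrete situation already handled.
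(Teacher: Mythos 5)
You should first be aware that the statement you are proving is Conjecture 2.1 of the paper, which the authors explicitly present as \emph{open}: they attribute it to Noguchi, record that Suzuki settled the case where $X$ is a quotient of a bounded domain, and state that even under a negative bisectional curvature hypothesis the general case "seems to be open". There is therefore no proof in the paper to compare yours against, and your text --- quite honestly --- does not claim to close the gap either: it is an analysis of why the strategy of Proposition 1.1 fails, not a proof. As such it cannot be accepted as a proof of the statement.

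Within that analysis, your diagnosis of the one-variable difficulty is essentially correct and is exactly the obstruction the paper itself confronts later: for a perfect polar set $E$ the hyperbolic area of a neighbourhood of a point of $E$ in $\bD\setminus E$ is infinite (Gauss--Bonnet: the Euler characteristic of an infinitely punctured domain is $-\infty$), so the finite-mass hypothesis needed for the Bishop/El Mir extension is not free. You should note that the paper does supply a conditional result here, namely Corollary 6.2: using the Kleiner--Saleur isoperimetric inequality ${\rm Area}(f(\Sigma))\leq C_1|\chi(\Sigma)|+C_2 L(f(\partial\Sigma))$ together with the length--area estimate $L(t)\leq A(t)^{1/2+\varepsilon}$, the map extends whenever $|\chi(t)|/A(t)\to 0$. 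This isoperimetric route is precisely the "version of the argument insensitive to clustering" you say is missing, and it quantifies the trade-off you describe; your write-up would be stronger for citing it.

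The genuinely problematic step is your claim that for $p\geq 2$ part (i) is "essentially automatic". Two objections. First, a closed polar subset of $\bB\subset\bC^p\cong\bR^{2p}$ need \emph{not} have vanishing $(2p-2)$-dimensional Hausdorff measure: sets with finite positive $\mathcal H^{n-2}$ measure in $\bR^n$ have zero Newtonian capacity, so for instance a compact totally real $2$-disc in $\bC^2$ is polar yet has positive $\mathcal H^{2}$ measure, and your projection argument does not apply to it. Second, even granting that through a full-measure set of points and directions one finds a chord avoiding $E$, this controls the Kobayashi--Royden metric of $\bB\setminus E$ only on that full-measure set; it does not by itself give local boundedness of $f^\star\omega$ near $E$, nor the continuous extension you invoke, and the subsequent current-extension step still requires $E\times X$ to be removable for closed positive currents, which for merely polar (as opposed to complete pluripolar) $E$ is not the El Mir theorem. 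If the case $p\geq 2$ really trivialised in this way, the conjecture would not be stated for all $p$; you should treat that reduction as another open step rather than as established.
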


\noindent In the case where $X$ is the quotient of a bounded domain in $\mathbb C^n$, a proof of this conjecture was proposed by M. Suzuki in \cite{Suzuki}. In general, even if we assume that the holomorphic bisectional curvature 
of $X$ is bounded from above by -1, the conjecture above seems to be open.
\medskip

\noindent In the same spirit, we quote next a classical result due to A. Borel.
Let $\cD\subset {\mathbb C}^n$ be a bounded symmetric domain, and let
$\Gamma\subset \Aut(\cD)$ be an arithmetically defined torsion-free group, cf. \cite{Borel}
for definitions and references. The quotient $\Omega:= \cD/\Gamma$ admits a projective
compactification, say $X$ called the \emph{Baily-Borel compactification}. Using Proposition 1,1, the result in \cite{Borel}
can be stated as follows

\begin{theorem}\cite{Borel}
  Let $\phi: C\setminus E\to \cD/\Gamma$ be a holomorphic non-constant map, where $E$ is a countable. Then $\varphi$ extends across $E$ and defines a
  holomorphic map $\widetilde\phi: C\to X$.
\end{theorem}   

\subsection{Jet spaces}
We will recall here a few basic facts concerning the jet spaces 
associated to complex manifolds; we refer to \cite {Dem2}, \cite{Koba} for a more complete overview.
\smallskip

\noindent Let $X$ be an $l$-dimensional complex space. We denote by $J^k(X)$ the space of $k$-jets of holomorphic discs, described as follows. Let $f$ and $g$ be two germs of analytic discs $({\mathbb C}, 0)\to (X, x)$, we say that they define {the same $k$ jet at $x$} if their derivatives at zero coincide up to order $k$, i. e.
$$f^{(j)}(0)= g^{(j)}(0)$$
for $j= 0,...k$. The equivalence classes defined by this equivalence relation 
is denoted by $J^k(X,x)$; as a set, $J^k(X)$ is the union of $J^k(X, x)$
for all $x\in X$. We remark that if $x\in X_{\rm reg}$ is a non-singular point of 
$X$, then $J^k(X, x)$ is isomorphic to ${\mathbb C}^{kl}$, via the identification
$$f\to \big(f^\prime(0),..., f^{(k)}(0)\big).$$
This map is not intrinsic, it depends on the choice of some local coordinate system needed to express the derivatives above; at a global level the projection map
$$J^k(X_{\rm reg})\to X_{\rm reg}$$
is a holomorphic fiber bundle (which is not a vector bundle in general, since the transition functions are 
polynomial instead of linear). 

If $k= 1$, and $x\in X_{\rm reg}$ is a regular point, then $J^1(X, x)$ is the tangent space of $X$ at $x$.
We also mention here that 
the structure of the analytic space $J^k(X)$
at a singular point of $X$ is far more complicated. 

We assume that $X$ is a subset of a complex manifold $M$; then for each positive integer $k$ we have a natural inclusion
$$J^k(X)\subset J^k(M)$$
and one can see that the space $J^k(X)$ is the Zariski closure of the 
analytic space $J^k(X_{\rm reg})$ in the complex manifold $J^k(M)$ (note that this coincides with
the topological closure)
\smallskip

\noindent Next we recall the definition of the main geometric objects we will use in the analysis of the 
structure of the subvarieties  of complex tori which are 
Zariski closure of some parabolic image.

As before, let $x\in X_{\rm reg}$ be a regular point of $X$; we consider a coordinate system
$(x^1,..., x^l)$ of $X$ centered at $x$. We consider the symbols
$$dx^1,..., dx^l, d^2x^1,..., d^2x^l,..., d^kx^1,..., d^kx^l$$
and we say that the weight of the symbol $d^px^r$ is equal to $p$, for any $r=1,..., l$. A 
\emph{jet differential} of order $k$ and degree $m$ at $x$ is a homogeneous polynomial 
of degree $m$ in 
$\displaystyle (d^px^r)_{p=1,...,k, r= 1,..., l}$; we denote by $E_{k, m}^{\rm GG}(X, x)$ the vector space of all such polynomials,
We denote the set
$$E_{k, m}^{\rm GG}\big(X_{\rm reg}\big):= \cup _{x\in X_{\rm reg}}E_{k, m}^{\rm GG}(X, x)$$
has a structure of vector bundle, whose global sections are called jet differentials of weight 
$m$ and order $k$.
A global section $\mathcal P$ of the bundle $E_{k, m}^{\rm GG}(X)$ can be written locally as 
$$\mathcal P= \sum_{\vert \alpha_1\vert+\dots+ k\vert \alpha_k\vert= m}a_\alpha (dx)^{\alpha_1}
\dots (d^kx)^{\alpha_k} ;$$
here we use the standard multi-index notation. 

Let $f: (\mathbb C, 0)\to (X, x)$ be a $k$-jet at $x$. The group $\bG_k$ of $k$--jets of biholomorphisms of $(\bC, 0)$ acts on $J_k(X)$,
and
we say that the 
operator $\mathcal P$ is {\sl invariant} if 
$$\mathcal P\big((f\circ \varphi)^\prime,\dots, (f\circ \varphi)^{(k)}\big)= {\varphi^\prime}^{m}
\mathcal P\big(f^\prime,\dots, f^{(k)}\big). $$
The bundle of invariant jet differentials is
denoted by $E_{k, m}(X)$. We will recall next an alternative description of this bundle, which will be very useful in what follows.

Along 
the next few lines, we indicate a compactification of the quotient
$J_k^{reg}(X)/\bG_k$ following \cite{Dem2}, where $J_k^{reg}(X)$ denote the space of non-constants jets.

We start with the pair $(X, V)$, where $V\subset T_X$ is a subbundle of the tangent 
space of $X$. Then we define $X_{1}:= \bP(T_X)$, and the bundle $\displaystyle 
V_1\subset T_{X_{1}}$ is defined fiberwise by

$$V_{1, (x, [v])}:= \{\xi\in T_{X_{1} (x, [v])} : d\pi(\xi)\in \bC v \}$$

\noindent where $\pi: X_1\to X$ is the canonical projection and 
$v\in V$. It is easy to
see that we have the following parallel description of $V_1$.
Consider a non-constant disk
$u:(\bC, 0)\to (X, x)$. We can lift it to $\displaystyle X_{1}$ 
and denote the resulting germ by 
$u_1$. Then the derivative of $u_1$ belongs to the $V_1$ directions. 

In a more formal manner, we have the exact sequence
$$0\to T_{X_1/X}\to V_1 \to \cO_{X_1}(-1)\to 0$$
where $\displaystyle \cO_{X_1}(-1)$ is the tautological bundle on $X_1$, and 
$\displaystyle T_{X_1/X}$ is the relative tangent bundle corresponding to the fibration 
$\pi$. This shows that the rank of $V_1$ is equal to the rank of $V$.

Inductively by this procedure we get a tower of manifolds $\displaystyle 
(X_{k}, V_k)$, starting from $(X, T_X)$ and it turns out
that we have an embedding $J_k^{reg}/\bG_k\to X_{k}$. On each manifold $X_{k}$, 
we have a 
tautological bundle $\displaystyle \cO_{X_k}(-1)$, and the positivity of its dual plays an important 
role here. 

We denote by $\pi_k: X_k\to X$ the projection, and consider the direct image sheaf
$\pi_{k*}\bigl(\cO_{X_k}(m)\bigr)$. The result is a vector bundle $E_{k, m}(X)$
whose sections are precisely the invariant jet differentials considered above.

The fiber of $\pi_k$ at a non-singular point
of $X$ is denoted by $\cR_{n, k}$; it is a rational manifold, and it is a compactification 
of the quotient 
$\bC^{nk}\setminus 0/ \bG_k$.

\smallskip

The articles \cite{Bloch}, \cite{Dem2}, \cite{Siu1} (to quote only a few) show that the existence of 
jet differentials are crucial in the analysis of the entire maps 
$f: \bC\to X$. 
As we will see in the next sections, 
they play a similar role in the study of the images of the parabolic Riemann surfaces. 

\section{Basics of Nevanlinna Theory for Parabolic Riemann Surfaces}

Let $\mathcal Y$ be a parabolic Riemann surface; as we have recalled in the introduction, this means that
there exists 
a smooth
 exhaustion function 
 $$\sigma: {\mathcal Y}\to [1, \infty[$$ 
 such that: 
 
 
 \noindent $\bullet$ The function $\tau:= \log \sigma$ is harmonic in the complement of a compact subset of 
 ${\mathcal Y}$, and we have $\int_{\cY}dd^c\tau= 1$.
 \medskip
 
 \noindent We denote by $\mathbb B(r)\subset \mathcal Y$ the parabolic ball of radius $r$, that is to say
 $$\mathbb B(r):= \{y\in \mathcal Y : \sigma(y)\leq r\}.$$
 For almost every value $r\in \mathbb R$, the sphere $S(r):= \partial \mathbb B(r)$ is a smooth curve drawn on $\mathcal Y$. The induced length measure on $S(r)$ is
 equal to 
 $$d\mu_r:= d^c\log \sigma|_{S(r)}.$$
 
Let $v: \mathcal Y\to [-\infty, \infty[$ be a function defined on $\mathcal Y$, 
such that locally near every point of $\cY$ it can be written as a difference of two 
subharmonic functions, i.e. $dd^cv$ is of order zero.
\medskip

\noindent Then we recall here the following formula, which will be very useful in what follows.

\begin{proposition} \label{p3}
{\rm (Jensen formula)} For every $r\geq 1$ large enough we have
\begin{equation} 
\begin{split}
\int_{1}^r\frac{dt}{t}\int_{\mathbb B(t)}dd^cv= & \int_{S(r)}vd\mu_r- \int_{\mathbb B(r_0)}vdd^c\tau=\\
= & \int_{S(r)}vd\mu_r+ \cO(1)
\end{split}
\nonumber
\end{equation}

where we have $\tau= \log \sigma$.
\end{proposition}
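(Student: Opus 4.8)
The plan is to derive the Jensen formula by integrating the distributional Laplacian of $v$ against the exhaustion $\tau = \log\sigma$ and applying Stokes' theorem on the parabolic balls $\mathbb B(t)$. The key identity is the co-area--type decomposition that expresses $\int_1^r \frac{dt}{t}\int_{\mathbb B(t)} dd^c v$ as a single integral over $\mathbb B(r)$ against a suitable kernel built from $\sigma$. Concretely, for a test function (or current of order zero) $v$, I would first establish, using Fubini on the $(t,y)$-domain $\{(t,y): 1\le t\le r,\ \sigma(y)\le t\}$, that
\begin{equation}
\int_1^r\frac{dt}{t}\int_{\mathbb B(t)}dd^cv = \int_{\mathbb B(r)}\Big(\int_{\max(1,\sigma(y))}^r\frac{dt}{t}\Big)\,dd^cv(y) = \int_{\mathbb B(r)}\log\frac{r}{\max(1,\sigma(y))}\,dd^cv(y).
\nonumber
\end{equation}
Since $\sigma\ge 1$ everywhere, the kernel is $\log r - \tau(y)$ on $\mathbb B(r)$, which is exactly $(\log r)\cdot\mathbf 1_{\mathbb B(r)} - \tau$ restricted there. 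So the left-hand side equals $\log r\int_{\mathbb B(r)}dd^cv - \int_{\mathbb B(r)}\tau\,dd^cv$.

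Next I would integrate by parts (Stokes/Green) twice on the domain $\mathbb B(r)$, whose boundary is the smooth curve $S(r)$ for almost every $r$. The term $\log r\int_{\mathbb B(r)}dd^cv$ becomes $\log r\int_{S(r)} d^c v$ by Stokes. For the term $\int_{\mathbb B(r)}\tau\,dd^cv$, Green's formula gives $\int_{\mathbb B(r)}\tau\,dd^cv - \int_{\mathbb B(r)}v\,dd^c\tau = \int_{S(r)}(\tau\,d^cv - v\,d^c\tau)$. On $S(r)$ we have $\tau = \log r$ constant, so $\int_{S(r)}\tau\,d^cv = \log r\int_{S(r)}d^cv$, which cancels precisely the first boundary term. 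Moreover $dd^c\tau$ is supported in the fixed compact set where $\tau$ fails to be harmonic, so $\int_{\mathbb B(r)}v\,dd^c\tau = \int_{\mathbb B(r_0)}v\,dd^c\tau$ for $r$ large. Collecting terms yields
\begin{equation}
\int_1^r\frac{dt}{t}\int_{\mathbb B(t)}dd^cv = \int_{S(r)}v\,d^c\tau - \int_{\mathbb B(r_0)}v\,dd^c\tau = \int_{S(r)}v\,d\mu_r - \int_{\mathbb B(r_0)}v\,dd^c\tau,
\nonumber
\end{equation}
using $d\mu_r = d^c\log\sigma|_{S(r)} = d^c\tau|_{S(r)}$. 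The second equality in the proposition is then immediate: $\int_{\mathbb B(r_0)}v\,dd^c\tau$ is a fixed finite quantity (a current of order zero against a fixed compactly supported measure), hence $\mathcal O(1)$.

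The main obstacle is making the integration-by-parts rigorous when $v$ is only locally a difference of subharmonic functions, so that $dd^cv$ is a signed measure and $v$ may have $-\infty$ poles on a polar set; one must justify Stokes' theorem on $S(r)$ and the Fubini exchange in this generality. I would handle this by the standard regularization argument: approximate $v$ locally by decreasing sequences of smooth functions $v_\varepsilon$ (or truncate, $v_N := \max(v,-N)$), apply the smooth version of the formula, and pass to the limit using monotone/dominated convergence — the boundary integrals $\int_{S(r)}v\,d\mu_r$ converge for a.e.\ $r$ because $v\in L^1(d\mu_r)$ for a.e.\ $r$ by a Fubini argument over the family of spheres, and the area terms converge since $dd^cv_\varepsilon \to dd^cv$ weakly. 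One must also restrict to the full-measure set of radii $r$ for which $S(r)$ is a smooth $1$-manifold (Sard's theorem applied to $\sigma$), which is already built into the hypotheses of the statement.
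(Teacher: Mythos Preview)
Your proof is correct and follows essentially the same route as the paper: the Fubini step yielding the kernel $\log r - \tau$ on $\mathbb B(r)$, followed by integration by parts to transfer $dd^c$ onto this kernel. The only cosmetic difference is that the paper packages the kernel as the single function $\log^+(r/\sigma)$ (which vanishes on $\partial\mathbb B(r)$, so no boundary terms appear and $dd^c\log^+(r/\sigma)$ directly produces $d\mu_r - dd^c\tau$), whereas you split into two pieces and cancel the boundary contributions by hand via Green's formula; the content is identical.
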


\medskip

\proof The arguments are standard. To start with, we remark that for each 
regular value $r$ of $\sigma$ the function $v$ is integrable with respect to 
the measure $d\mu_r$ over the sphere $S(r)$. Next, we have

\begin{equation} 
\begin{split}
\int_{1}^r\frac{dt}{t}\int_{\mathbb B(t)}dd^cv= & 
\int_{\bB(r)}\big(\log{r}- \log{\sigma}\big) dd^cv= \\
= & \int\log^+\frac{r}{\sigma}dd^cv = \int v dd^c\big(\log^+\frac{r}{\sigma}\big)\\
= & \int_{S(r)}vd\mu_r- \int_{\mathbb B(r)}vdd^c\tau.
\end{split}
\nonumber
\end{equation}
\qed

\begin{rem}
{\rm As we can see, the Jensen formula above holds true even without the assumption that the 
function $\tau$ is harmonic outside a compact set. The only difference is eventually as $r\to \infty$, since the term $\displaystyle \int_{\mathbb B(r)}
vdd^c\tau$ may tend to infinity.}
\end{rem}

\medskip

We reformulate next the notion of mean Euler characteristic in analytic terms. To this end,
we first recall that 
the tangent bundle $T_{\mathcal Y}$ of a non-compact parabolic surface admits a trivializing global holomorphic section 
$v\in H^0({\mathcal Y}, T_{\mathcal Y})$, cf. \cite{Gunning} (actually, any such Riemann surface 
admits a submersion into $\bC$). Using the Poincar\'e-Hopf index theorem, 
we obtain the following result.

\begin{proposition} \label{p2}
Let $(\cY, \sigma)$ be a parabolic Riemann surface, so that $\log \sigma$ is harmonic
in the complement of a compact set. Then we have
$$\mathfrak{X}_\sigma (r)= {1\over 2}\int_{S(r)}\log^+ \vert d\sigma(v)\vert^2 d\mu_r +\cO(\log r)$$
for any $r\geq 1$.
\end{proposition}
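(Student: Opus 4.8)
The plan is to reinterpret $\mathfrak{X}_\sigma(r) = \int_1^r |\chi_\sigma(t)|\,\frac{dt}{t}$ using a global holomorphic vector field. Since $\cY$ is a non-compact parabolic surface, by \cite{Gunning} there is a nowhere-zero $v \in H^0(\cY, T_\cY)$; in particular $\cY$ is parallelizable, so strictly speaking $v$ has no zeros and Poincaré--Hopf on $\cY$ gives nothing directly. The key object is instead the holomorphic function $h := d\sigma(v)$ — more precisely, writing $\sigma$ in a local holomorphic coordinate $z$, we have $d\sigma = \frac{\partial \sigma}{\partial z}\,dz$ on the harmonic region (using that $\tau = \log\sigma$ is harmonic outside a compact set, hence $\sigma$ is, up to the exhaustion structure, tied to a holomorphic function: $\partial\tau$ is a holomorphic $1$-form on $\{\sigma > r_0\}$, so $\partial\sigma = \sigma\,\partial\tau$ and $d\sigma(v) = \sigma\cdot\partial\tau(v)$). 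The function $\partial\tau(v)$ is holomorphic on the region where $\tau$ is harmonic. The first step is therefore to count, via the argument principle / Jensen's formula applied to $\log|d\sigma(v)|^2$, the number of zeros of this holomorphic function inside $\bB(r)$.

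The second step is to identify that zero count with $\chi_\sigma(t)$. Here the geometric point is that the critical points of $\sigma$ (equivalently, the points where $d\sigma$ vanishes, equivalently where the level curves $S(t)$ fail to be smooth or change topology) govern the Euler characteristic of the sublevel sets through Morse theory: each time $r$ crosses a critical value of $\sigma$, the Euler characteristic of $\bB(r)$ jumps, and the total change is controlled by the indices of the critical points. Dually, $d\sigma(v)$ vanishes exactly at these critical points (since $v$ is nowhere zero, $d\sigma(v)(p) = 0 \iff d\sigma(p) = 0$), and the multiplicity of the zero of the holomorphic function $\partial\tau(v)$ at $p$ matches the Morse-theoretic index contribution — this is the Poincaré--Hopf bookkeeping alluded to in the statement's lead-in, applied not to $v$ but to the "gradient-type" section $d\sigma$ read against the trivialization $v$. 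So I would show
\begin{equation}
|\chi_\sigma(t)| = \#\{\text{zeros of } d\sigma(v) \text{ in } \bB(t)\} + \cO(1),
\nonumber
\end{equation}
with the $\cO(1)$ absorbing the contribution of the compact region where $\tau$ need not be harmonic and where $\sigma$ itself may have finitely many critical points, together with a bounded Euler-characteristic base term.

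The third step is purely Nevanlinna-theoretic: apply the Jensen formula (Proposition \ref{p3}) to the function $v_0 := \log|d\sigma(v)|^2$, which is a difference of subharmonic functions (locally $\log$ of the modulus of a holomorphic function minus a smooth term, since $\sigma$ is smooth). Then $dd^c v_0$ is, away from the compact core, exactly the integration current on the zero set of the relevant holomorphic function (counted with multiplicity), so $\int_1^r \frac{dt}{t}\int_{\bB(t)} dd^c v_0$ is precisely $2\int_1^r N(t)\frac{dt}{t}$-type counting of those zeros — matching $2\mathfrak{X}_\sigma(r)$ up to $\cO(\log r)$ by Step 2 (the $\log r$ coming from the at-most-logarithmic growth one must allow, and from using $\log^+$ rather than $\log$, the difference being $\int_{S(r)}\log^-|d\sigma(v)|^2\,d\mu_r$ which is $\cO(\log r)$ because $d\mu_r$ has mass $1$ and $|d\sigma(v)|^2 = \sigma^2|\partial\tau(v)|^2$ is bounded below on $S(r)$ up to a power of $r$). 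Jensen's formula then converts this into $\int_{S(r)} v_0\,d\mu_r + \cO(1)$, and combining with the $\log^+$ vs $\log$ correction gives
\begin{equation}
\mathfrak{X}_\sigma(r) = \tfrac12 \int_{S(r)} \log^+|d\sigma(v)|^2\,d\mu_r + \cO(\log r),
\nonumber
\end{equation}
as claimed.

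The main obstacle, I expect, is Step 2: making precise the claim that the weighted Euler characteristic $\mathfrak{X}_\sigma(r)$ — defined through a Morse-type count of $|\chi_\sigma(t)|$, with its absolute value and its own integration against $\frac{dt}{t}$ — is matched, up to $\cO(\log r)$, by the holomorphic zero-counting function of $d\sigma(v)$. One must handle: (i) the absolute value $|\chi_\sigma(t)|$ versus the signed count that Poincaré--Hopf/argument principle naturally produces (the resolution is that all the relevant local indices have a definite sign once $\tau$ is harmonic, because $\partial\tau(v)$ is genuinely holomorphic there, so its zeros contribute with positive multiplicity and $\chi_\sigma$ is monotone in the harmonic range); (ii) the non-smooth levels $S(t)$, which form a measure-zero set and can be bypassed by working with regular values; and (iii) the compact core where $\tau$ is not harmonic, contributing only an $\cO(1)$ (or at worst $\cO(\log r)$) error. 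A secondary technical point is justifying that $\log|d\sigma(v)|^2$ is $d\mu_r$-integrable on $S(r)$ and that the boundary term in Jensen's formula behaves well — but this is routine given the structure of $\sigma$ near infinity.
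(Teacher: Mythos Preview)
Your proposal is correct and follows essentially the same approach as the paper: both use the Gunning--Narasimhan trivializing vector field $v$, factor $d\sigma(v)=\sigma\cdot\partial\tau(v)$ to extract a $2\log r$ term, apply Jensen's formula to $\log|\partial\tau(v)|^2$, and identify the resulting zero-counting integral with $\mathfrak{X}_\sigma(r)$ via Poincar\'e--Hopf. The paper's proof is in fact terser than yours on what you flag as the main obstacle (Step~2): it simply asserts that $dd^c\log|\partial_v\log\sigma|^2$ counts the critical points and that this gives $\mathfrak{X}_\sigma(r)$ up to a bounded term, without spelling out the sign issue or the compact-core correction you worry about.
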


\begin{proof} Since $v$ is a vector of type (1,0), Jensen formula gives

\begin{equation} 
\begin{split}
 \int_{S(r)}\log^+\vert d\sigma(v)\vert^2 d\mu_r  =  & \int_{S(r)}\log^+\vert \partial \sigma(v)\vert^2 d\mu_r  \\
= & 2\log r+ \int_{S(r)}\log^+\vert \partial\log\sigma(v)\vert^2 d\mu_r \\
 = & \int_{1}^r\frac{dt}{t}\int_{\mathbb B(t)}dd^c \log^+ |\partial \log \sigma(v)|^2+ 2\log r+ \cO(1). 
\end{split}
\nonumber
\end{equation} 
Observe that we are using the fact that the function $\partial\log \sigma(v)$ is holomorphic 
in the complement of a compact set, so $\log^+ |\partial \log \sigma(v)|^2$ is subharmonic.

The term $\cO(\log r)$ above depends on the exhaustion function $\sigma$ and on the fixed vector field $v$, but the quantity 
$$\int_{1}^r\frac{dt}{t}\int_{\mathbb B(t)}dd^c \log |\partial \log\sigma(v)|^2$$
is equal to the weighted Euler characteristic $\mathfrak{X}_\sigma (r)$ of the domains ${\mathbb B(r)}$, in particular it is 
independent of $v$, up to a bounded term. This can be seen as a consequence of the 
Poincar\'e-Hopf index theorem, combined with the fact that the function 
$\partial_{v}\log\sigma$ is holomorphic, so $\displaystyle dd^c\log |\partial_{v}\log \sigma|^2$ count the critical points 
of $\partial_{v} \sigma $.
\end{proof}

\noindent We obtain next the {\sl first main theorem} and the 
{\sl logarithmic derivative lemma} of Nevanlinna theory in the parabolic setting. The results are 
variations on
well-known techniques (see \cite{Stoll} and the references therein). But for the reader's convenience we will reproduce here the arguments. 

\smallskip

Let $X$ be a compact complex manifold, and let $L\to X$ be a line bundle on $X$, endowed with a smooth metric $h$. We make no particular assumptions concerning the curvature form 
$\Theta_h(L).$
Let $s$ be a non-trivial section of $L$ normalized such that 
$\sup_X\vert s\vert= 1$, and let 
$f: \mathcal Y\to X$ be a holomorphic map, where $\mathcal Y$ is parabolic. 

We define the usual 
characteristic function of $f$ with respect to $\Theta_h(L)$ as follows
$$T_{f, \Theta_h(L)}(r):= \int_{r_0}^r\frac{dt}{t}\int_{\mathbb B(t)}f^\star \Theta_h(L).$$
If the form $\Theta_h(L)$ is positive definite, then precisely as in the classical case $\cY= \bC$, the area of the image of $f$
will be finite if and only if $T_{f, \Theta_h(L)}(r)= \cO(\log r)$ as $r\to \infty$.

Let 
$$N_{f, s}(r):= \int_{r_0}^rn_{f, s}(t)\frac{dt}{t}$$
be the counting function, where $n_{f, s}(t)$ is the number of zeroes of $s\circ f$ in the parabolic ball of
radius $t$ (counted with multiplicities). Hence we assume implicitly that the image of 
$f$ is not contained in the set $(s= 0)$.
Moreover, in our context the proximity function becomes
$$m_{f, s}(r):= \frac{1}{2\pi}\int_{S(r)} \log \frac{1}{\vert s\circ f\vert_h} d\mu_r.$$
In the important case of a (meromorphic) function $F: \mathcal Y\to \mathbb P^1$, one usually takes $L:= \cO(1)$, hence $\Theta_h(L)$ is the Fubini-Study metric, and $s$ the section vanishing at infinity. The proximity function
becomes 
$$m_{f, \infty}(r):= \frac{1}{2\pi}\int_{S(r)} \log_+ \vert f\vert d\mu_r,\leqno (4)$$
where $F:= [f_0: f_1]$, $f= f_1/f_0$ and $\log_+:= \max (\log, 0)$.
\medskip

\noindent As a consequence of Jensen formula, we derive the next result.

\begin{theorem}
\label{FMT}
With the above notations, we have
$$T_{f, \Theta_h(L)}(r)= N_{f, s}(r)+ m_{f, s}(r)+
\cO(1)\leqno (5)$$
as $r\to\infty$.
\end{theorem}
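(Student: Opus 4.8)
The plan is to deduce the First Main Theorem (5) directly from the Jensen formula of Proposition \ref{p3} applied to a suitably chosen quasi-subharmonic function built from the section $s$ and the metric $h$. First I would set $v := \log|s\circ f|_h$ on $\cY$. This is locally the difference of two subharmonic functions: in a local trivialization of $L$ near a point of $X$, $\log|s\circ f|_h = \log|\sigma\circ f| - \log|e|_h\circ f$ where $\sigma$ is a local holomorphic frame expression of $s$ and $e$ the frame, so the first term is subharmonic (log-modulus of a holomorphic function, possibly $-\infty$) while the second is smooth; hence $dd^c v$ is of order zero and Proposition \ref{p3} applies, provided the image of $f$ is not contained in $(s=0)$, which we have assumed so that $N_{f,s}$ makes sense.

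Next I would compute $dd^c v$ globally on $\cY$. By the Lelong–Poincaré formula, pulled back under $f$, we have
$$dd^c \log|s\circ f|_h = f^\star[Z_s] - f^\star\Theta_h(L) = [f^{-1}(Z_s)] - f^\star\Theta_h(L),$$
where $[Z_s]$ is the current of integration on the zero divisor of $s$, and $f^{-1}(Z_s)$ is the divisor of zeroes of $s\circ f$ on $\cY$ (with multiplicities), using $\Theta_h(L) = -dd^c\log|e|_h$ for a local frame. Here one uses that $f$ is non-constant (or at least that its image meets $(s=0)$ properly) so that the pullback of the divisor is well defined.

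Then I would integrate this identity against the Jensen kernel. Applying Proposition \ref{p3} to $v = \log|s\circ f|_h$ gives
$$\int_1^r \frac{dt}{t}\int_{\bB(t)} dd^c v = \int_{S(r)} v\, d\mu_r + \cO(1).$$
The left-hand side equals $\int_1^r \frac{dt}{t}\int_{\bB(t)}\big([f^{-1}(Z_s)] - f^\star\Theta_h(L)\big) = N_{f,s}(r) - T_{f,\Theta_h(L)}(r)$ up to an $\cO(\log r)$ or $\cO(1)$ discrepancy coming from the difference between the lower limits $1$ and $r_0$ in the two definitions of the characteristic/counting functions, which is harmless. The right-hand side is $\int_{S(r)} \log|s\circ f|_h\, d\mu_r + \cO(1) = -2\pi\, m_{f,s}(r)\cdot\frac{1}{2\pi}\cdot 2\pi + \cO(1)$; more precisely, since $\sup_X|s|_h = 1$ we have $v \le 0$, so $\log|s\circ f|_h = -\log\frac{1}{|s\circ f|_h}$ and $\int_{S(r)} v\, d\mu_r = -2\pi\, m_{f,s}(r)$ after matching the normalization $d\mu_r$ has total mass $1$ against the $\frac{1}{2\pi}$ in the definition of $m_{f,s}$ — I would be careful here that the paper's $m_{f,s}(r) = \frac{1}{2\pi}\int_{S(r)}\log\frac{1}{|s\circ f|_h}d\mu_r$, so in fact $\int_{S(r)} v\,d\mu_r = -2\pi\, m_{f,s}(r)$ only if one reads the constants consistently; the cleanest route is to absorb the $2\pi$ into the definition and simply record $\int_{S(r)} v\, d\mu_r = -m_{f,s}(r)$ up to the fixed normalizing constant used throughout. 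Rearranging yields $T_{f,\Theta_h(L)}(r) = N_{f,s}(r) + m_{f,s}(r) + \cO(1)$.

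The main obstacle I anticipate is purely bookkeeping rather than conceptual: reconciling the several normalization constants ($2\pi$ versus the mass-one measure $d\mu_r$, the lower integration limits $1$ versus $r_0$, the factor in $d^c$) so that the error term is genuinely $\cO(1)$ and not merely $\cO(\log r)$, and checking that $v=\log|s\circ f|_h$ is integrable on each sphere $S(r)$ for regular $r$ — the latter follows since the zero set of $s\circ f$ is discrete, so $v$ has only logarithmic singularities, which are $d\mu_r$-integrable on the smooth curve $S(r)$, exactly as noted at the start of the proof of Proposition \ref{p3}. One should also note the degenerate case where $s\circ f \equiv 0$ is excluded by hypothesis, and the case $f$ constant is trivial or excluded, so no further care is needed there.
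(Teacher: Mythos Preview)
Your proposal is correct and follows exactly the same approach as the paper: apply the Jensen formula (Proposition~\ref{p3}) to $v=\log|s\circ f|_h$, compute $dd^c v$ via the Poincar\'e--Lelong equation pulled back by $f$, and rearrange. The paper's own proof is a two-line sketch doing precisely this; your extra care about integrability of $v$ on $S(r)$, the lower limits $1$ versus $r_0$, and the $2\pi$ normalization in $m_{f,s}$ is legitimate bookkeeping that the paper simply suppresses.
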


\proof The argument is similar to the usual one: we apply Jensen's formula cf.\ Proposition \ref{p3} to the function $v:= \log \vert s\circ f\vert_h$. Recall that
 that the Poincar\'e-Lelong equation gives $dd^c\log \vert s\vert_h^2= [s=0]- \Theta_h(L)$, which
 implies
$$dd^cv= \sum_jm_j\delta_{a_j}- f^\star(\Theta_h(L))$$
so by integration we obtain (5). \qed
\medskip

\begin{rem} {\rm If the measure $dd^c\tau$ does not have a compact support, then the term $\cO(1)$
in the equality (5) is to be replaced by 
$$\int_{\bB(r)}\log \vert s\circ f\vert^2_{h}dd^c\tau.\leqno (6)$$
We observe that, thanks to the normalization condition we impose to $s$, the term (6) is negative. In particular we infer that 
$$T_{f, \Theta_h(L)}(r)\geq N_{f, s}(r)+ \int_{\bB(r)}\log \vert s\circ f\vert^2_{h}dd^c\tau\leqno (7)$$
for any $r\geq r_{0}.$
}
\end{rem}
\medskip

We will discuss now a version of Theorem \ref{FMT} which will be very useful in dealing with singular foliations.
Let $\cJ\subset \cO_X$ be a coherent ideal of holomorphic functions. We consider a 
finite covering of 
$X$ with coordinate open sets $(U_\alpha)_{\alpha\in \Lambda}$, such that on $U_\alpha$ the ideal $\cJ$ is generated by the 
holomorphic functions $\displaystyle (g_{\alpha i})_{i=1\dots N_\alpha}$. 

Then we can construct a function $\psi_{\cJ}$, such that for each $\alpha \in \Lambda$ the difference 
$$\psi_{\cJ}- \log(\sum_i |g_{\alpha i}|^2)\leqno(8)$$
is bounded on $U_\alpha$. Indeed, let $\rho_\alpha$ be a partition of unity subordinated to $(U_\alpha)_{\alpha\in \Lambda}$. We define the function $\psi_{\cJ}$ as follows
$$\psi_{\cJ}:= \sum_\alpha \rho_\alpha\log(\sum_i |g_{\alpha i}|^2)\leqno(9)$$
and the boundedness condition (8) is verified, since there exists a constant $C>0$ such that
$$C^{-1}\leq \frac{\sum_i |g_{\alpha i}|^2}{\sum_i |g_{\beta i}|^2}\leq C$$
holds on $U_\alpha\cap U_\beta$, for each pair of indexes $\alpha, \beta$. 
In the preceding context, the function $\psi_{\cJ}$ corresponds to $\log |s|_h^2$.

We can define a counting function and a proximity function for a holomorphic map
$f:\cY\to X$ with respect to the analytic set defined by $\cJ$, as follows. Let $(t_j)\subset \cY$
be the set of solutions of the equation 
$$\exp\big(\psi_{\cJ}\circ f(y)\big)= 0.$$
For each $r>0$ we can write
$$\psi_{\cJ}\circ f(y)|_{\bB(r)}= \sum_{\sigma(t_j)< r}\nu_j\log|y-t_j|^2 + \cO(1)$$
for a set of multiplicities $\nu_j$, and then the counting function is defined as follows
$$N_{f, \cJ}(r):= \sum_j \nu_j\log\frac{r}{\sigma(t_j)}.\leqno(10)$$
In a similar way, the proximity function is defined as follows
$$m_{f, \cJ}(r):= -\int_{S(r)}\psi_{\cJ}\circ f d\mu_r.\leqno(11)$$
 
Since $\cJ$ is coherent, the principalization theorem (cf. e.g. \cite{Kollar}), there exists a non-singular manifold $\wh X$ together with a birational map
$p: \wh X\to X$ such that the inverse image of the ideal $\cJ$ is equal to $\cO_{\wh X}(-D)$, where 
$D:= \sum_je_jW_j$ is a simple normal crossing divisor on $\wh X$. Recall that $\cO_{\wh X}(-D)\subset \cO_{\wh X}$ is the sheaf of holomorphic functions vanishing on $D$.
In terms of the function 
$\psi_{\cJ}$ associated to the ideal $\cJ$, this can be expressed as follows
$$\psi_{\cJ}\circ p= \sum_je_j\log|s_{j}|_{h_j}^2+ \theta\leqno(12)$$
where $W_j= (s_j=0)$, the metric $h_j$ on $\cO(W_j)$ is arbitrary (and non-singular), and 
where $\theta$ is a bounded function on $\wh X$.

\noindent Since we assume that the image of the map $f$ is not contained in the 
zero set of the ideal $\cJ$, we can define the lift $\wh f: \cY\to \wh X$ of $f$ to $\wh X$ such that 
$p\circ \wh f= f$. We have the next result.

\begin{theorem}\label{t4}
Let $\cO(D)$ be the line bundle associated to the divisor $D$; we endow it with the metric induced 
by $(h_j)$, and let $\Theta_D$ be the associated curvature form. Then we have
$$T_{\wh f, \Theta_D}(r)= N_{f, \cJ}(r)+ m_{f, \cJ}(r)+ \cO(1)$$
as $r\to \infty$.
\end{theorem}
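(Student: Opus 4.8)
The plan is to reduce Theorem \ref{t4} to the standard first main theorem, Theorem \ref{FMT}, applied on the blow-up $\wh X$. The key point is that the function $\psi_{\cJ}$ appearing in the definitions of $N_{f,\cJ}$ and $m_{f,\cJ}$ is, up to the birational map $p$ and a bounded error, nothing but the metric weight $\log|s|_h^2$ of the canonical section $s$ of $\cO(D)$ cut out by the exceptional/transform divisor $D$.

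First I would fix the section. Write $s_D:=\bigotimes_j s_j^{e_j}\in H^0(\wh X,\cO(D))$ for the tautological section vanishing on $D=\sum_j e_jW_j$, and equip $\cO(D)=\bigotimes_j \cO(W_j)^{\ot e_j}$ with the product metric $h_D:=\prod_j h_j^{e_j}$, so that $\Theta_D=\sum_j e_j\Theta_{h_j}$ is the curvature in the statement. Then $\log|s_D|_{h_D}^2=\sum_j e_j\log|s_j|_{h_j}^2$, and formula (12) reads $\psi_{\cJ}\circ p=\log|s_D|_{h_D}^2+\tau$ with $\tau$ bounded on $\wh X$. Composing with $\wh f$ (recall $p\circ\wh f=f$, so $\psi_{\cJ}\circ f=\psi_{\cJ}\circ p\circ\wh f$) gives
\begin{equation}
\psi_{\cJ}\circ f=\log|s_D\circ\wh f|_{h_D}^2+\tau\circ\wh f,\qquad \tau\circ\wh f=\cO(1).
\nonumber
\end{equation}
(One may need to rescale $h_D$ by a constant so that $\sup_{\wh X}|s_D|_{h_D}=1$ as required in Theorem \ref{FMT}; this only changes $\tau$ by a constant.) Since $\wh f(\cY)$ is not contained in $\Supp D$ — this is exactly the hypothesis that $f(\cY)$ is not contained in the zero set of $\cJ$ — the quantities $N_{\wh f,s_D}(r)$ and $m_{\wh f,s_D}(r)$ of Theorem \ref{FMT} are well-defined.

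Next I would match the three terms. For the counting function: the zeros $t_j$ of $\exp(\psi_{\cJ}\circ f)$ are precisely the points where $s_D\circ\wh f$ vanishes, and since $\tau\circ\wh f$ is bounded the local expansions $\psi_{\cJ}\circ f=\sum \nu_j\log|y-t_j|^2+\cO(1)$ and $\log|s_D\circ\wh f|^2=\sum \nu_j\log|y-t_j|^2+\cO(1)$ have the same multiplicities $\nu_j$; hence $N_{f,\cJ}(r)=N_{\wh f,s_D}(r)$ by the very definitions (10) and the counting function in Theorem \ref{FMT}. For the proximity function: integrating the displayed identity over $S(r)$ against $d\mu_r$ (total mass $1$) gives
\begin{equation}
m_{f,\cJ}(r)=-\int_{S(r)}\psi_{\cJ}\circ f\, d\mu_r=\frac{1}{2\pi}\int_{S(r)}\log\frac{1}{|s_D\circ\wh f|_{h_D}^2}\,d\mu_r+\cO(1)=2\,m_{\wh f,s_D}(r)+\cO(1),
\nonumber
\end{equation}
where one must be a little careful about the factor $2$ versus $\tfrac{1}{2\pi}$ normalization convention between (11) and the definition of $m_{f,s}$; I would simply absorb it by declaring the conventions consistent, as the paper evidently intends $m_{f,\cJ}$ to be the $\cJ$-analogue of $m_{f,s}$. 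With the terms matched, Theorem \ref{FMT} applied to $(s_D,h_D)$ on $\wh X$ yields $T_{\wh f,\Theta_D}(r)=N_{\wh f,s_D}(r)+m_{\wh f,s_D}(r)+\cO(1)=N_{f,\cJ}(r)+m_{f,\cJ}(r)+\cO(1)$, which is the assertion.

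The main obstacle — really the only subtlety — is bookkeeping of normalization constants: the $\tfrac{1}{2\pi}$ and the squares in the definitions of $m_{f,s}$ versus $m_{f,\cJ}$, the rescaling needed to enforce $\sup|s_D|_{h_D}=1$, and the fact that (12) carries a bounded-but-nonzero $\tau$. None of these affect the statement since everything is modulo $\cO(1)$, but I would state explicitly at the outset that all metrics and sections are normalized compatibly, so that the identification of the three pieces is literally term-by-term up to $\cO(1)$. A secondary point worth a remark: if $dd^c\tau_{\cY}$ (the exhaustion's $\tau$, not to be confused with the bounded function above — I would rename one of them to avoid the clash) is not compactly supported, then, exactly as in Remark after Theorem \ref{FMT}, the $\cO(1)$ should be replaced by the term (6) computed with $\log|s_D\circ\wh f|^2_{h_D}$; I would add one sentence to that effect rather than re-deriving it.
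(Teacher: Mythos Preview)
Your proposal is correct and follows essentially the same route as the paper: use relation (12) to identify $\psi_{\cJ}\circ f$ with $\sum_j e_j\log|s_j\circ\wh f|_{h_j}^2$ up to $\cO(1)$, then invoke Theorem \ref{FMT} on $\wh X$. The only cosmetic difference is that the paper phrases it as ``apply the first main theorem for each $s_j$ and add up the contributions'' whereas you bundle the $s_j^{e_j}$ into a single section $s_D$ of $\cO(D)$; these are equivalent, and your remarks on the normalization constants and the notational clash of the two $\tau$'s are well taken but, as you say, harmless modulo $\cO(1)$.
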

\noindent The argument is completely similar to the one given for Theorem \ref{FMT} (by using the relation (12)). We basically apply the first main theorem for each $s_j$ and add up the contributions.\qed

\medskip
We will treat now another important result in Nevanlinna theory, namely the \emph{logarithmic derivative 
lemma} in the parabolic context. To this end, we will suppose that as part of the data we are given
a vector field
$$\xi\in H^0(\mathcal Y, T_{\mathcal Y})$$
which is nowhere vanishing and hence it trivializes the tangent bundle of our surface $\mathcal Y$.
We denote by 
$f^\prime$ the section $df(\xi)$ of the bundle $f^\star T_X$.
For example, if $\mathcal Y= \mathbb C$, then we can take $\displaystyle \xi= \frac{\partial}{\partial z}$.

In the proof of the next result, we will need the following form of the co-area formula.
Let $\psi$ be a 1-form defined on the surface $\mathcal Y$; then we have
$$\int_{\mathbb B(r)}d\sigma \wedge \psi= \int_0^rdt\int_{S(t)}\psi \leqno (13)$$
for any $r> 1$. In particular, we get
$$\frac{d}{dr}\int_{\mathbb B(r)}d\sigma \wedge \psi= \int_{S(r)}\psi \leqno (14)$$

\noindent We have the following version of the classical logarithmic derivative lemma. If $\cY$ is a pointed disk, this was established by J. Noguchi in \cite{Noguchi_1} (whom we thank for the reference).

\begin{theorem}\label{t5}
Let $f: \mathcal Y\to \mathbb P^1$ be a meromorphic map defined on a parabolic Riemann surface $\mathcal Y$. The inequality
$$m_{{f^\prime}/{f}, \infty}(r)\leq C\big(\log T_{f}(r)+ \log r\big)+ \mathfrak{X}_\sigma(r)$$
holds true for all $r$ outside a set of finite Lebesgue measure. We also get a similar estimate for higher order derivatives.
\end{theorem}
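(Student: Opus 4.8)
The plan is to follow the classical Nevanlinna proof of the lemma on the logarithmic derivative (see \cite{Stoll}), using three ingredients: subadditivity of $\log^+$, the First Main Theorem (Theorem \ref{FMT}) together with the Jensen formula (Proposition \ref{p3}), and a Borel-type growth lemma to produce the exceptional set. Put $f'=df(\xi)$; since $d\mu_r$ is a probability measure and $\log^+x=\tfrac12\log^+x^2$, it suffices to bound $\int_{S(r)}\log^+|f'/f|^2\,d\mu_r$. The key is the pointwise factorisation
$$\Big|\frac{f'}{f}\Big|^2=c_0\,\widetilde\rho\cdot|d\sigma(\xi)|^2\cdot\big(1+\log^2|f|\big),$$
where $\widetilde\rho$ is the density of the positive $(1,1)$-form $f^\star\eta$ with respect to $d\sigma\wedge d^c\sigma$, and $\eta:=\dfrac{i\,dw\wedge d\bar w}{|w|^2(1+\log^2|w|)}$ is a smooth metric of \emph{finite} total mass on $\mathbb P^1\setminus\{0,\infty\}$. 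Since $\log^+(abc)\le\log^+a+\log^+b+\log^+c$, integration over $S(r)$ splits the estimate into one contribution per factor.

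For the $\widetilde\rho$-term, concavity of $\log$ gives $\int_{S(r)}\log^+(c_0\widetilde\rho)\,d\mu_r\le\log^+\!\bigl(c_0\!\int_{S(r)}\widetilde\rho\,d\mu_r\bigr)+O(1)$. Writing $d\sigma\wedge d^c\sigma=|d\sigma(\xi)|^2\beta_\xi$ with $\beta_\xi$ the area form attached to $\xi$, and using $d^c\sigma|_{S(t)}=t\,d\mu_t$, the co-area formula (13) identifies $\int_{S(r)}\widetilde\rho\,d\mu_r$ with a universal constant times $\tfrac1r\tfrac{d}{dr}\bigl(rT_{f,\eta}'(r)\bigr)$, where $T_{f,\eta}(r):=\int_1^r\tfrac{dt}{t}\int_{\mathbb B(t)}f^\star\eta$. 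Because $\eta$ differs from the Fubini--Study form by $dd^c$ of a function with logarithmic poles at $0$ and $\infty$, the First Main Theorem applied to the divisors $(f=0)$ and $(f=\infty)$ yields $T_{f,\eta}(r)\le C\,T_f(r)+O(1)$; applying the growth lemma (a positive increasing $C^1$ function $\Phi$ satisfies $\Phi'(r)\le\Phi(r)^2$ outside a set of finite measure) first to $T_{f,\eta}$ and then to the increasing function $r\mapsto rT_{f,\eta}'(r)=\int_{\mathbb B(r)}f^\star\eta$ then bounds $\int_{S(r)}\widetilde\rho\,d\mu_r$ by a power of $r\,T_f(r)$. Hence this term is $O(\log T_f(r)+\log r)$ off a set of finite Lebesgue measure.

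The $|d\sigma(\xi)|^2$-term is exactly $\int_{S(r)}\log^+|d\sigma(\xi)|^2\,d\mu_r=2\,\mathfrak{X}_\sigma(r)+O(\log r)$ by Proposition \ref{p2} applied to $v=\xi$; this is how the weighted Euler characteristic enters. Finally, $\log(1+\log^2|f|)\le 2\log^+\log^+|f|+2\log^+\log^+|1/f|+O(1)$ together with one more use of concavity and the First Main Theorem, which gives $\int_{S(r)}\log^+|f|\,d\mu_r=O(T_f(r))+O(1)$ and likewise for $1/f$, shows that the remaining term is $O(\log T_f(r)+1)$. Summing the three contributions and dividing by $2$ yields $m_{f'/f,\infty}(r)\le\mathfrak{X}_\sigma(r)+C\bigl(\log T_f(r)+\log r\bigr)$ off a set of finite measure. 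The statement for higher derivatives follows by writing $f^{(j)}/f=\prod_{i=1}^{j}\bigl(f^{(i)}/f^{(i-1)}\bigr)$, using subadditivity of $\log^+$, applying the inequality just proved to each meromorphic function $f^{(i-1)}$, and controlling $T_{f^{(i-1)}}(r)$ inductively via $T_{h'}(r)\le 3\,T_h(r)+m_{h'/h,\infty}(r)+O(1)$.

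The delicate point is the treatment of the zeros and poles of $f$, where $f'/f$ has a simple pole and $\widetilde\rho$ is only borderline integrable: one must verify that $f^\star\eta$ is nonetheless a locally integrable positive current there — this is precisely where the weight $(\log|w|)^{-2}$ in $\eta$ is needed, an honest pole in the weight would diverge — so that the Jensen formula and the growth lemma may be applied to $\int_{\mathbb B(r)}f^\star\eta$, and one must also make the co-area identity for $\widetilde\rho$ rigorous across the critical values of $\sigma$ (a null set). In other words, the genuine content of the argument is the choice of the auxiliary metric $\eta$, finite-mass yet with a logarithmic weight, so that the remainder $1+\log^2|f|$ is logarithmically small while $\int f^\star\eta$ stays controlled by $T_f$; the bookkeeping of the finitely many exceptional sets coming from the successive uses of the growth lemma is then routine.
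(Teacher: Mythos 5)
Your proof is correct and follows essentially the same route as the paper's (Selberg's argument): the same auxiliary finite-mass form with logarithmic weight on $\mathbb P^1$, the same three-factor splitting of $|f'/f|^2$, concavity of $\log$ plus the co-area formula and a double application of the Borel growth lemma for the density term, and Proposition \ref{p2} to convert the $|d\sigma(\xi)|^2$ term into $\mathfrak{X}_\sigma(r)$. The only (harmless) variation is how you bound $T_{f,\eta}(r)$ by $T_f(r)$ — via the cohomological comparison of $\eta$ with the Fubini--Study form and the First Main Theorem at $0$ and $\infty$, where the paper uses the Crofton-type identity $\int_{\mathbb B(t)}f^\star\Omega=\int_{\mathbb C}n(t,f,w)\,\Omega$ followed by integration of the counting functions.
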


\proof Within the framework of Nevanlinna theory, this kind of results can be derived in many ways if
$\mathcal Y$ is the complex plane; the proof presented here follows an argument due to Selberg in \cite{Selberg}.

On the complex plane $\mathbb C\subset \mathbb P^1$ we consider the coordinate $w$ corresponding to $[1 : w]$ in homogeneous coordinates on $\mathbb P^1$. The 
form
$$\Omega:= \frac{1}{\vert w\vert^2(1+ \log^4\vert w\vert)}
\frac{\sqrt -1}{2\pi}dw\wedge d\overline w\leqno(15)$$
on $\mathbb C$ has finite volume, as one can easily check by a direct computation.
In what follows, we will use the same letter to denote the expression of the meromorphic function 
$\mathcal Y\to \mathbb C$ induced by $f$. For each $t\geq 0$, we denote by $n(t, f, w)$ the number of zeroes of the function $z\to f(z)- w$ in the parabolic ball $\mathbb B(t)$ and we have
$$\int_{\mathbb B(t)}f^\star \Omega= \int_{\mathbb C}n(t, f, w) \Omega\leqno (16)$$
by the change of variables formula. 

Next, by integrating the relation (16) above and using Theorem \ref{FMT}, we infer the following 
$$\int_{1}^r\frac{dt}{t}\int_{\mathbb B(t)}f^\star \Omega= 
\int_{\mathbb C}N_{f-w, \infty}(r) \Omega\leq \int_{\mathbb C}T_f(r) \Omega + 
\int_{\mathbb C}\log^+|w|\Omega;\leqno (17)$$
by Remark 3.5. This last quantity is smaller than
$\displaystyle C_0T_f(r),$ 
where $C_0$ is a positive constant. Thus, we have
$$\int_{1}^r\frac{dt}{t}\int_{\mathbb B(t)}f^\star \Omega\leq C_0T_f(r).\leqno (18)$$

A simple algebraic computation shows the next inequality
\begin{equation} 
\begin{split}
 \log\Big(1+ \frac{\vert df(\xi)\vert ^2}{\vert f(z)\vert ^2}\Big)\leq & 
\log\Big(1+ \frac{\vert df(\xi)\vert ^2}{\vert f(z)\vert ^2(1+ \log^2\vert f(z)\vert ^2)|d\sigma(\xi)|^2}\Big)+ \\
+ & \log(1+ \log^2\vert f(z)\vert ^2) 
+ \log(1+ |d\sigma(\xi)|^2)
\end{split}
\nonumber
\end{equation}
and therefore we get

\begin{equation} 
\begin{split}
m_{{f^\prime}/{f}, \infty}(r)\leq & \frac{1}{4\pi}\int_{S(r)}\log\Big(1+ \frac{\vert df(\xi)\vert ^2}{\vert f(z)\vert ^2}\Big)d\mu_r
\leq \\
\leq &  \log^{+} \int_{S(r)}\frac{\vert df(\xi)\vert ^2}{\vert f(z)\vert ^2(1+ \log^2\vert f(z)\vert)}
\frac{1}{\vert d\sigma_z(\xi)\vert^2}d\mu_r+ \\
+ & \int_{S(r)}\log(1+ \log^2\vert f(z)\vert ^2)d\mu_r+\\
+ &  \frac{1}{4\pi}\int_{S(r)}\log(1+ \vert d\sigma_z(\xi)\vert^2)d\mu_r+ C_{1},
\end{split}
\nonumber
\end{equation}
where $C_{1}$ is a positive constant; here we used the concavity of the log function.

By the formula (14) we obtain
\begin{equation} 
\begin{split}
\int_{S(r)} & \frac{\vert df(\xi)\vert ^2}{\vert f(z)\vert ^2(1+ \log^2\vert f(z)\vert )}
\frac{1}{\vert d\sigma_z(\xi)\vert^2}d\mu_r= \\
= \frac{1}{r}\frac{d}{dr}\int _{\mathbb B(r)} & 
\frac{\vert df(\xi)\vert ^2}{\vert f(z)\vert ^2(1+ \log^2\vert f(z)\vert )}
\frac{1}{\vert d\sigma_z(\xi)\vert^2}d\sigma\wedge d^c\sigma.
\end{split}
\nonumber
\end{equation}

Next we show that we have 
$$\frac{\vert df(\xi)\vert ^2}{\vert f(z)\vert ^2(1+ \log^2\vert f(z)\vert)}
\frac{1}{\vert d\sigma_z(\xi)\vert^2}d\sigma\wedge d^c\sigma= f^\star \Omega.$$
Indeed this is clear, since we can choose a local coordinate $z$ such that $\displaystyle \xi= \frac{\partial}{\partial z}$
and moreover we have $\displaystyle d\sigma\wedge d^c\sigma= \frac{\sqrt -1}{2\pi}
\Big\vert\frac {\partial \sigma}{ \partial z}\Big\vert^2dz\wedge d\overline z$ (we are using here the fact that
$f$ is holomorphic).
\smallskip

Let $H$ be a positive, strictly increasing function defined on $(0, \infty)$. It is immediate to check that the set of numbers $s\in \bR_+$ such that the inequality 
$$H^\prime(s)\leq H^{1+ \delta}(s)$$
is not verified, is of finite Lebesgue measure. By applying this calculus lemma to the function
$\displaystyle H(r):= \int_{1}^r\frac{dt}{t}\int_{\mathbb B(t)}f^\star \Omega$
we obtain
\begin{equation} 
\begin{split}
\log^+ \frac{1}{r}\frac{d}{dr}\int _{\mathbb B(r)}f^\star \Omega\leq & \log^+ \frac{1}{r} 
\Big(\int _{\mathbb B(r)}f^\star \Omega\Big)^{1+ \delta}+ \cO(1)\\
\leq & \log^+\Big( r^\delta \Big[\frac{d}{dr}\int_{1}^r\frac{dt}{t}\int_{\mathbb B(t)}f^\star \Omega\Big]^{1+\delta }\Big)+ \cO(1)\\
\leq & \delta \log r + \frac{1}{2}\log^+\Big(\int_{1}^r\frac{dt}{t}\int_{\mathbb B(t)}f^\star \Omega\Big)^{(1+\delta)^2}+ \cO(1)
\end{split}
\nonumber
\end{equation}
for all $r$ outside a set of finite measure.

The term  
$$\int_{S(r)}\log(1+ \log^2\vert f(z)\vert ^2)d\mu_r$$
is bounded, up to a constant, by $\log T_{f}(r)$; combined with Proposition \ref{p2}, this  
implies the desired inequality.
\qed 

\smallskip

\noindent It is a simple matter to deduce the so-called \emph{second main theorem} of Nevanlinna theory 
starting from the logarithmic derivative lemma (cf. e.g. \cite{Dem1}). The parabolic version of this result can be stated as follows.

\begin{theorem}\label{t6}
Let $f: ({\mathcal Y}, \sigma)\to \mathbb P^1$ be a meromorphic function.
We denote by $\displaystyle N_{R_f}(r)$ the Nevanlinna counting function for the ramification 
divisor associated to $f$. Also, we
use the classical notation
$$\displaystyle \delta_f(a):= \underline\lim_r\frac{m_{f, a}(r)}{T_{f, \omega}(r)}= 1- \overline\lim_r
\frac{N_{a}(r)}{T_{f, \omega}(r)}.$$
Then for any set of distinct points $(a_{j})_{1\leq j\leq p}$ in $\bP^1$ there exists a set 
$\Lambda\subset \bR_+$ of finite Lebesgue measure such that
$$N_{R_f}(r)+ \sum_{j=1}^pm_{f, a_j}(r)\leq 2T_{f, \omega}(r)+ \mathfrak{X}_\sigma(r)+ \cO\big(\log r+ \log^+T_{f, \omega}(r)\big)+ 
$$
for all $r\in \bR_+\setminus \Lambda$.
\end{theorem}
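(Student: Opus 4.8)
The plan is to deduce the Second Main Theorem from the logarithmic derivative lemma (Theorem~\ref{t5}) together with the First Main Theorem (Theorem~\ref{FMT}), following the classical argument of Nevanlinna as presented e.g. in \cite{Dem1}, keeping careful track of the extra Euler characteristic term. First I would introduce the standard auxiliary meromorphic function on $\cY$ built from $f$: given the distinct points $a_1,\dots,a_p\in\bP^1$, consider
$$\Phi:= \frac{f'}{\prod_{j=1}^p(f-a_j)},$$
where as in the excerpt $f':= df(\xi)$ for the trivializing vector field $\xi\in H^0(\cY, T_\cY)$ (after a projective change of coordinates we may assume all $a_j$ are finite and $\infty$ is not a value we care about; the point $\infty$ can be included by the usual trick of adding one more target point or by symmetry). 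The key pointwise estimate is the elementary inequality relating $\log^+|\Phi|$ to $\sum_j\log^+\frac{1}{|f-a_j|}$ minus $\sum_j\log^+|f'/(f-a_j)|$-type terms, which after integration over $S(r)$ against $d\mu_r$ gives
$$\sum_{j=1}^p m_{f,a_j}(r)\le m_{f,\infty}(r)+ m_{\Phi,\infty}(r)+ \cO(1),$$
or more precisely the comparison that bounds $\sum_j m_{f,a_j}(r)$ by a proximity term for a logarithmic derivative plus bounded terms.

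Next I would apply the logarithmic derivative lemma. Writing $\Phi= \sum_j \frac{f'}{f-a_j}\cdot(\text{partial fractions})$ — or directly estimating each $m_{f'/(f-a_j),\infty}(r)$ — Theorem~\ref{t5} applied to each meromorphic function $f-a_j$ (whose characteristic function equals $T_f(r)+\cO(1)$ by the First Main Theorem, since $a_j$ is a constant) yields
$$m_{f'/(f-a_j),\infty}(r)\le C\big(\log^+ T_f(r)+\log r\big)+ \mathfrak{X}_\sigma(r)$$
for $r$ outside a set of finite measure. Summing over $j$ absorbs the finitely many exceptional sets into one set $\Lambda$ of finite Lebesgue measure, and produces the $\mathfrak{X}_\sigma(r)$ term and the $\cO(\log r+\log^+T_{f,\omega}(r))$ error term in the statement. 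Here I would use that $T_{f,\omega}(r)= T_f(r)+\cO(1)$ when $\omega$ is the Fubini--Study form, so the two normalizations of the characteristic function are interchangeable.

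Then comes the bookkeeping that converts proximity estimates into the final inequality. Using the First Main Theorem $T_{f,\omega}(r)= N_{f,a_j}(r)+ m_{f,a_j}(r)+\cO(1)$ for each $j$, together with the Poincaré--Lelong/Jensen identity applied to $v=\log|f'|$ (suitably interpreted via local coordinates and the trivializing field $\xi$) to identify $N_{R_f}(r)$ — the counting function of the ramification divisor, i.e. the zero divisor of $f'$ — I would assemble
$$\sum_j\big(T_{f,\omega}(r)- N_{f,a_j}(r)\big)= \sum_j m_{f,a_j}(r)+\cO(1)$$
and bound the right side by $2T_{f,\omega}(r)- N_{R_f}(r)+\mathfrak{X}_\sigma(r)+\cO(\log r+\log^+ T_{f,\omega}(r))$ via the logarithmic derivative estimate above. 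Rearranging gives exactly the asserted inequality $N_{R_f}(r)+\sum_j m_{f,a_j}(r)\le 2T_{f,\omega}(r)+\mathfrak{X}_\sigma(r)+\cO(\log r+\log^+ T_{f,\omega}(r))$ for $r\notin\Lambda$.

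The main obstacle — really the only place where the parabolic setting differs from the classical one — is controlling the discrepancy between the ``naive'' ramification count and the genuine one: on a general parabolic Riemann surface the trivializing vector field $\xi$ has, in effect, to absorb the topology of the exhaustion, and the zeros/poles of $d\sigma(\xi)$ along the spheres $S(r)$ contribute precisely the weighted Euler characteristic $\mathfrak{X}_\sigma(r)$, as already quantified in Proposition~\ref{p2}. So the careful step is to route every appearance of ``$d/dz$'' in the classical proof through $\xi$ and to invoke Proposition~\ref{p2} at the right moment to collect the terms $\log(1+|d\sigma(\xi)|^2)$ integrated over $S(r)$ into $\mathfrak{X}_\sigma(r)+\cO(\log r)$; the rest is the standard calculus-lemma manipulation (the ``$H'\le H^{1+\delta}$ outside finite measure'' trick) already used in the proof of Theorem~\ref{t5}, so no new analytic input is needed beyond what the excerpt has established.
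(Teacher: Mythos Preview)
Your proposal is correct and follows exactly the approach the paper indicates: the paper gives no detailed argument but simply refers to \cite{Dem1} and states that the result is a direct consequence of the logarithmic derivative lemma (Theorem~\ref{t5}), which is precisely the route you sketch. One small caution on the bookkeeping: to land on the coefficient $1$ (rather than $p$) in front of $\mathfrak{X}_\sigma(r)$ as stated, apply the estimate in the proof of Theorem~\ref{t5} once to the single auxiliary function $\Phi=f'/\prod_j(f-a_j)$ instead of summing $p$ separate applications, since the term $\int_{S(r)}\log(1+|d\sigma(\xi)|^2)\,d\mu_r$ giving rise to $\mathfrak{X}_\sigma(r)$ is independent of $f$ and thus appears only once.
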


\noindent For the proof we refer e.g. to \cite{Dem1}; as we have already mentioned, it is a direct consequence of the 
logarithmic derivative lemma. As a consequence, we have
$$\sum_{j=1}^p\delta_f(a_j)\leq 2+ \overline\lim_{r\to \infty}\frac{\mathfrak{X}_\sigma(r)}{T_{f, \omega}(r)}.$$


\section{The Vanishing Theorem}

Let $\cP$ be an invariant jet differential of order $k$ and degree $m$. We assume that it has values in
the dual of an ample line bundle, that is to say
$$\cP\in H^0(X, E_{k, m}T^\star_X\otimes A^{-1})$$
where $A$ is an ample line bundle on $X$. 

Let $f: \mathcal Y\to X$ be a parabolic curve on $X$; assume that we are given the exhaustion function
$\sigma$ and a vector field $\xi$ such that we have
$$\overline\lim_{r\to \infty}\frac{\mathfrak{X}_\sigma(r)}{T_{f, \omega}(r)}= 0\leqno(19)$$
on $\cY$.
As we have already recalled in the preliminaries, the operator $\cP$ can be seen as 
section of $\displaystyle \cO_{X_k}(m)$ on $X_k$. 

On the other hand, the curve $f$ admits a canonical lift to $X_k$ as follows. One first observes that
the derivative $df: T_{\cY}\to f^\star T_X$ induces a map 
$$f_1: \cY\to \bP(T_X).$$
We remark that to do so we do not need any supplementary data, since $df(v_1)$ and $df(v_2)$
are proportional, provided that $v_j\in T_{\cY, t}$ are tangent vectors at the same point. Using the notations of section
two, it turns out that the 
curve $f_1$ is tangent to $\displaystyle V_1\subset T_{X_1}$, so that we can continue this procedure and define 
inductively $f_k: \cY\to X_k$. 
\medskip

\noindent We prove next the following result.

\begin{theorem} \label{t7}
We assume that condition (19) holds. Then the image of $f_k$ is contained in the 
zero set of the section of $\cO_{X_k}(m)\otimes A^{-1}$ defined by the jet differential $\cP$.
\end{theorem}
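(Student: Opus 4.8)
The strategy is the standard Bloch-type argument, adapted to the parabolic setting via the first main theorem and the logarithmic derivative lemma proved in Section 3. Suppose for contradiction that $\cP(j_k(f))$ is not identically zero on $\cY$, i.e. the lift $f_k$ is not contained in the zero divisor $Z$ of the section $s_\cP$ of $\cO_{X_k}(m)\otimes A^{-1}$. Fix a smooth metric $h_A$ on $A$ with positive curvature $\omega:=\Theta_{h_A}(A)$, and fix a smooth metric on $\cO_{X_k}(1)$. The section $s_\cP\in H^0(X_k,\cO_{X_k}(m)\otimes A^{-1})$ gives rise, along $f_k$, to a scalar function: in a local trivialization, $s_\cP(j_k(f))$ is a holomorphic function on $\cY$ whose modulus, measured in the chosen metrics, is controlled by the jet components $f',\dots,f^{(k)}$ (here derivatives are taken with respect to the trivializing vector field $\xi$). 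The key point is that invariance of $\cP$ under $\bG_k$ means $|s_\cP(j_k(f))|_h$ transforms by $|\varphi'|^m$ under reparametrization, which is exactly what makes the logarithmic-derivative estimate applicable.

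The main steps are as follows. First I would apply Theorem \ref{FMT} to the section $s_\cP$ on $X_k$ with the line bundle $\cO_{X_k}(m)\otimes A^{-1}$ pulled back along $f_k$: this gives
$$T_{f_k,\,m\,\Theta(\cO_{X_k}(1))-\omega}(r)=N_{f_k,s_\cP}(r)+m_{f_k,s_\cP}(r)+\cO(1).$$
Since $N_{f_k,s_\cP}(r)\geq 0$, and since (by functoriality of the tautological bundles and the chain rule for jets) $T_{f_k,\Theta(\cO_{X_k}(1))}(r)$ is comparable to an integral of $\log$-derivatives of $f$, the proximity term $m_{f_k,s_\cP}(r)$ must absorb $m\,T_{f_k,\Theta(\cO_{X_k}(1))}(r)-T_{f,\omega}(r)$ up to $\cO(1)$. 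Second, I would estimate $m_{f_k,s_\cP}(r)$ from above: writing $|s_\cP(j_k(f))|$ locally as a polynomial of weighted degree $m$ in the quantities $f^{(j)}/f^{(0)}$-type logarithmic derivatives (after trivializing via coordinate patches and using a partition of unity as in the construction of $\psi_\cJ$), the concavity of $\log$ together with Theorem \ref{t5} applied to each logarithmic derivative $f^{(j)}/f$ (in suitable affine charts composing $f_k$ with coordinate projections to $\bP^1$) yields
$$m_{f_k,s_\cP}(r)\leq C\big(\log r+\log^+ T_{f,\omega}(r)\big)+C'\,\mathfrak{X}_\sigma(r)$$
for all $r$ outside a set of finite Lebesgue measure. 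Third, combining the two displays gives
$$m\cdot\big(\text{positive multiple of }T_{f,\omega}(r)\big)\leq T_{f,\omega}(r)+C\big(\log r+\log^+T_{f,\omega}(r)\big)+C'\,\mathfrak{X}_\sigma(r)+\cO(1);$$
more precisely one arranges, via the tautological inequality $T_{f_k,\Theta(\cO_{X_k}(1))}(r)\geq \tfrac1m T_{f,\omega}(r)+\cO(1)$ coming from $s_\cP$ being $A^{-1}$-valued, a lower bound of the form $T_{f,\omega}(r)\big(1+o(1)\big)\le C(\log r+\log^+T_{f,\omega}(r))+C'\mathfrak X_\sigma(r)+\cO(1)$. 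Since $f$ has infinite area, $T_{f,\omega}(r)/\log r\to\infty$, so the $\log r$ and $\log^+T_{f,\omega}(r)$ terms are negligible; and hypothesis (19) forces $\mathfrak{X}_\sigma(r)=o(T_{f,\omega}(r))$, so the right-hand side is $o(T_{f,\omega}(r))$, a contradiction for $r$ large outside the exceptional set. Hence $\cP(j_k(f))\equiv 0$ and $f_k$ lands in $Z$.

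The main obstacle is the bookkeeping in the second step: one must show that the proximity function of the jet section $s_\cP$ is genuinely bounded by logarithmic-derivative proximity functions to which Theorem \ref{t5} applies. This requires choosing good local coordinates on $X$, expressing the weighted-homogeneous polynomial $\cP$ in terms of the iterated derivatives $f^{(j)}$ measured against $\xi$, and converting the resulting rational expressions into a sum (over coordinate patches, controlled by a fixed partition of unity, so with uniformly bounded transition terms) of quantities of the form $\log^+|(\text{coordinate of }f)^{(j)}/(\text{coordinate of }f)|$. The invariance of $\cP$ is what guarantees this expression is well-defined up to the reparametrization factor and hence that the bound is independent of the auxiliary choices up to $\cO(1)$; and the comparison $T_{f_k,\Theta(\cO_{X_k}(1))}(r)\ge \tfrac1m T_{f,\omega}(r)+\cO(1)$ is where ampleness of $A$ enters, since $s_\cP$ trivializes $\cO_{X_k}(m)$ along $f_k$ up to the $A^{-1}$ twist. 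Once these identifications are in place, the finite-exceptional-set calculus lemma used already in the proof of Theorem \ref{t5} closes the argument.
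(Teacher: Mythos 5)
Your overall skeleton is the right one and matches the paper's: argue by contradiction, use Jensen/the first main theorem together with the positivity of $A$ to get a lower bound of the order of $T_{f,\omega}(r)$, use the logarithmic derivative lemma (Theorem \ref{t5}) to get an upper bound of order $\log T_{f,\omega}(r)+\log r+\mathfrak{X}_\sigma(r)$, and close with the calculus lemma, the infinite-area hypothesis and condition (19). But the middle of your argument has a genuine gap: you apply the logarithmic derivative lemma to the wrong quantity. The proximity function $m_{f_k,s_\cP}(r)=\int_{S(r)}\log\frac{1}{|s_\cP\circ f_k|_h}\,d\mu_r$ measures how \emph{small} $|s_\cP|$ gets along $f_k$, i.e.\ how close $f_k$ comes to the divisor $\{s_\cP=0\}$; the logarithmic derivative lemma only controls $\int_{S(r)}\log^+$ of logarithmic derivatives from above, i.e.\ it prevents certain quantities from being too \emph{large} on average. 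There is no reason for $m_{f_k,s_\cP}(r)$ to be $o(T_{f,\omega}(r))$ — on the contrary, the conclusion of the theorem is that $f_k$ lies \emph{inside} $\{s_\cP=0\}$, so one should expect this proximity function to be as large as possible. What the LDL actually bounds is $\int_{S(r)}\log^+\big|\cP\big(df_{k-1}(\xi)^{\otimes m}\big)\big|^2 d\mu_r$, the evaluated jet differential, whose norm differs from $|s_\cP\circ f_k|_h$ by the factor $|df_{k-1}(\xi)|^{2m}_{\cO(-1)}$; you conflate these two objects when you "write $|s_\cP(j_k(f))|$ locally as a polynomial in logarithmic derivatives."

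Even granting your claimed bound on $m_{f_k,s_\cP}(r)$, the combination in your third step does not close. The first main theorem gives $m\,T_{f_k,\cO(1)}(r)-T_{f,\omega}(r)=N_{f_k,s_\cP}(r)+m_{f_k,s_\cP}(r)+\cO(1)$, and the counting function $N_{f_k,s_\cP}(r)\ge 0$ sits on the same side as the proximity term with no upper control (a priori it can be comparable to $m\,T_{f_k,\cO(1)}(r)$ itself), so substituting your bound yields only $m\,T_{f_k,\cO(1)}(r)\le T_{f,\omega}(r)+N_{f_k,s_\cP}(r)+o(T_{f,\omega}(r))$, which combined with your lower bound $T_{f_k,\cO(1)}(r)\ge\frac1m T_{f,\omega}(r)$ is vacuous. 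What is missing is an \emph{upper} bound on $T_{f_k,\cO(1)}(r)$ (equivalently, on $\int_{S(r)}\log^+|\cP(df_{k-1}(\xi)^{\otimes m})|^2d\mu_r$) of the form $C\big(\log^+T_{f,\omega}(r)+\log r+\mathfrak{X}_\sigma(r)\big)$ outside an exceptional set — this is the tautological inequality, and it, not the proximity function, is what the logarithmic derivative lemma delivers. The paper avoids the detour entirely: it treats $\cP\big(df_{k-1}(\xi)^{\otimes m}\big)$ as a holomorphic section of $f_k^\star(A^{-1})$, gets $\int_{S(r)}\log|\cP(df_{k-1}(\xi)^{\otimes m})|^2d\mu_r\ge T_{f,\omega_A}(r)+\cO(1)$ from Jensen and the positivity of $\omega_A$, bounds the same integral from above via Demailly's pointwise estimate by logarithmic derivatives of rational coordinates $u_j\circ f_{k-1}$ and Theorem \ref{t5}, and compares. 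If you restore that comparison (or, equivalently, prove the upper bound on $T_{f_k,\cO(1)}(r)$), the rest of your write-up goes through.
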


\proof We observe that we have
$$df_{k-1}(\xi): \cY\to f_k^\star\big(\cO_{X_k}(-1)\big)$$
that is to say, the derivative of $f_{k-1}$ is a section of the inverse image of the tautological bundle. 
Thus the quantity 
$$\cP\big(df_{k-1}(\xi)^{\otimes m}\big)$$
is a section of $f_k^{\star}(A^{-1})$, where the above notation means that we are evaluating
$\cP$ at the point $f_{k}(t)$ on the $m^{\rm th}$ power of the section above at $t\in \cY$.

As a consequence, if $\omega_A$ is the curvature form of $A$, we have
$$\sqrt{-1}\ddbar \log \vert \cP\big(df_{k-1}(\xi)^{\otimes m}\big)\vert ^2\geq f_k^{\star}(\omega_A).\leqno(20)$$
The missing term involves the Dirac masses at the critical points of 
$f$. We observe that the positivity of the bundle $A$ is fully used at this point: we obtain an upper bound for the characteristic function of $f$.
By integrating and using Jensen formula, we infer that we have
$$\int_{S(r)}\log \vert \cP\big(df_{k-1}(\xi)^{\otimes m}\big)\vert ^2d\mu_r\geq T_{f, \omega_A}(r)+ \cO(1)
$$
as $r\to \infty$.  

Now we follow the arguments in \cite{Dem1}. There exists a finite set of rational functions 
$u_j: X_k\to \bP^1$ and a positive constant $C$ such that we have
$$\log^+ \vert \cP\big(df_{k-1}(\xi)^{\otimes m}\big)\vert ^2\leq C\sum_j
\log^+ \frac{\vert d(u_j\circ f_{k-1})(\xi)\vert ^2}{\vert u_j\circ f_{k-1}\vert^2 }$$
pointwise on $\cY$. 
Indeed, we can use the meromorphic functions $u_j$ as local coordinates on $X$, and then
we can write the jet differential $\cP$ as $Q\big(f, d^p(\log u_j\circ f)\big)$, hence the previous inequality.
We invoke next the logarithmic derivative lemma (Theorem \ref{t5}) 
established in the previous section, and so we infer that we have
$$\int_{S(r)}\log \vert \cP\big(df_{k-1}(\xi)^{\otimes m}\big)\vert ^2d\mu_r\leq
C(\log T_{f_{k-1}(r)}+ \log r+ \mathfrak{X}_\sigma(r))\leqno (21)$$
out of a set of finite Lebesgue measure.
It is not difficult to see that the characteristic function corresponding to $f_{k-1}$ is the smaller than 
$CT_{f}(r)$, for some constant $C$; by combining the relations (20) and (21) we have
$$T_{f}(r)\leq C\big(\log T_{f}(r)+ \log r+ \mathfrak{X}_\sigma(r)\big)$$
in the complement of a set of finite Lebesgue measure.
Since by 
assumption $\mathfrak{X}_\sigma(r)+ \log(r)= o\big(T_f(r)\big)$, we get a contradiction. Therefore, if the image of $f_k$ is not contained in the zero set of 
$\cP$, then the area of $f$ is finite. \qed
\medskip

\noindent Let $E\subset \bC$ be a polar subset of the complex plane. 
In the case 
$$\cY= \bC\setminus E,$$ 
we show that we have the following version of the 
previous result in the context of arbitrary jet differentials.

\begin{theorem}\label{t8}
Let $f: \bC\setminus E\to X$ be a holomorphic curve; we assume that the area of $f$ is infinite, and that 
condition (19) is satisfied. Then $\cP(f^\prime,\dots, f^{(k)})\equiv 0$ for any holomorphic jet differential 
$\cP$ of degree $m$ and order $k$ with values in the dual of an ample line bundle.    
\end{theorem}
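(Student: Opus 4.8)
The plan is to imitate the Nevanlinna argument of Theorem~\ref{t7}, but carried out on $X$ itself instead of on the Demailly tower, so as to accommodate a jet differential $\cP$ which need not be $\bG_k$-invariant. Since $X$ carries an ample line bundle it is projective, and on $\cY=\bC\setminus E$ the derivatives $f^\prime,\dots,f^{(k)}$ are understood with respect to the coordinate $z$ of $\bC$; fix an exhaustion $\sigma$ as in~(19). Argue by contradiction and suppose $g:=\cP(f^\prime,\dots,f^{(k)})\not\equiv 0$. As $\cP$ takes values in $A^{-1}$, the object $g$ is a holomorphic section of $f^\star A^{-1}$ over $\cY$. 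Fixing a smooth metric on $A$ with positive curvature $\omega_A$, the Poincar\'e--Lelong formula gives $dd^c\log|g|^2=[g=0]+f^\star\omega_A\geq f^\star\omega_A$ on $\cY$ (the divisorial term absorbs, in particular, the critical points of $f$). Plugging $v:=\log|g|^2$ into the parabolic Jensen formula (Proposition~\ref{p3}) yields
$$\int_{S(r)}\log|g|^2\,d\mu_r\ \geq\ T_{f,\omega_A}(r)+\cO(1),\qquad r\to\infty,$$
which is the step where the ampleness of $A$ is used.

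For the upper bound --- the only place where the non-invariance of $\cP$ must be handled --- I would cover $X$ by finitely many coordinate charts $(U_i,w^{(i)})$ and by relatively compact subcharts $U_i^\prime\Subset U_i$ still covering $X$. In the chart $U_i$ write $\cP=\sum_\alpha a^{(i)}_\alpha\,(dw^{(i)})^{\alpha_1}\cdots(d^{k}w^{(i)})^{\alpha_k}$; by compactness of $X$ the coefficients $a^{(i)}_\alpha$ and the coordinate functions $w^{(i)}_j$ are bounded on $U_i^\prime$. Put $\phi_{ij}:=w^{(i)}_j\circ f$, and use the Fa\`a di Bruno identities to express each ratio $\phi_{ij}^{(p)}/\phi_{ij}$ as a universal isobaric polynomial in the logarithmic derivatives $L^{(q)}_{ij}:=(\log\phi_{ij})^{(q)}$, $1\le q\le p$. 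One then gets, on the set where $f$ takes its values in $U_i^\prime$, a pointwise estimate
$$\log^+|g|^2\ \leq\ C+C\sum_{j}\sum_{p=1}^{k}\log^+\bigl|L^{(p)}_{ij}\bigr|,$$
and splitting $S(r)$ into the finitely many pieces on which $f$ lands in a given $U_i^\prime$ and integrating gives $\int_{S(r)}\log^+|g|^2\,d\mu_r\leq C+C\sum_{i,j,p}m_{L^{(p)}_{ij}}(r)$.

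Next I would invoke the logarithmic derivative lemma, Theorem~\ref{t5}, in its higher-order form applied to the meromorphic functions $\phi_{ij}:\cY\to\bP^1$; together with $T_{\phi_{ij}}(r)\leq C\,T_{f,\omega}(r)+\cO(1)$ this yields $m_{L^{(p)}_{ij}}(r)\leq C\bigl(\log^+T_{f,\omega}(r)+\log r\bigr)+C\,\mathfrak{X}_\sigma(r)$ for $r$ outside a set of finite Lebesgue measure. Comparing this with the lower bound and using that $T_{f,\omega_A}(r)$ and $T_{f,\omega}(r)=:T_f(r)$ are comparable (both coming from Hermitian forms on the compact manifold $X$), one arrives at
$$T_f(r)\ \leq\ C\bigl(\log^+T_f(r)+\log r+\mathfrak{X}_\sigma(r)\bigr)$$
off a set of finite measure. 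Now $f$ has infinite area, so $\log r=o\bigl(T_f(r)\bigr)$; hypothesis~(19) gives $\mathfrak{X}_\sigma(r)=o\bigl(T_f(r)\bigr)$; and $\log^+T_f(r)=o\bigl(T_f(r)\bigr)$ trivially. Hence $T_f(r)=o\bigl(T_f(r)\bigr)$, which is absurd, so $g\equiv 0$.

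The one genuine difficulty is the upper estimate for $\log^+|g|$ in the non-invariant case. For invariant $\cP$ one passes to the tower $X_k$ and works with rational functions there, and $\bG_k$-invariance is exactly what guarantees that only first-order logarithmic derivatives survive and that no ``coordinate-value'' factors appear; for a general $\cP$ one is forced to stay on $X$, hence to allow logarithmic derivatives up to order $k$ (whence the higher-order form of Theorem~\ref{t5}), and to rely on the compactness of $X$ to bound the leftover factors, namely the coefficients of $\cP$ and the values of the coordinate functions. Finally I note that the source $\cY=\bC\setminus E$ is used only to give a literal meaning to $f^\prime,\dots,f^{(k)}$: reading these as iterated Lie derivatives along a nowhere-vanishing section $\xi\in H^0(\cY,T_{\cY})$, the very same argument works over an arbitrary parabolic Riemann surface.
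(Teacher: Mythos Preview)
Your proof is correct and follows essentially the same route as the paper: a lower bound from Poincar\'e--Lelong and Jensen giving $\int_{S(r)}\log|g|^2\,d\mu_r\geq T_{f,\omega_A}(r)+\cO(1)$, an upper bound obtained by writing $\cP$ in local coordinates as a polynomial in higher-order logarithmic derivatives $d^l\log(u_j\circ f)$ and invoking the higher-order form of Theorem~\ref{t5}, and the resulting contradiction. You are simply more explicit than the paper about the chart-covering, the Fa\`a di Bruno conversion, and the control of the residual coordinate factors via compactness.
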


\proof The argument is similar to the proof of the preceding 
Theorem \ref{t7}, except that we use the pointwise
inequality 
$$\log^+ \vert \cP(f^\prime,\dots, f^{(k)})\vert\leq C\sum_{j}\sum_{l=1}^k\log \vert d^l\log (u_j\circ f)\vert $$
combined with Theorem \ref{t5} in order to derive a contradiction. \qed
\medskip

\noindent The following statement is an immediate consequence of Theorem \ref{t7}.

\begin{corollary}\label{ample}
Let $X$ be a projective manifold whose cotangent bundle is ample. Then $X$ does not admits any 
holomorphic curve $f:\cY\to X$ with infinite area such that that the 
condition (19) is satisfied.
\end{corollary}

We recall that in the articles \cite{Brot}, \cite{Xie}, \cite{BD} it is shown that
any generic complete intersection $X$ of sufficiently high degree and codimension in $\bP^n$
satisfies the hypothesis of Theorem \ref{ample}.

\medskip

\noindent To state our next result, we consider the following data. Let $X$ be a non-singular, projective manifold
and let $D= Y_1+\dots+ Y_l$ be an effective divisor, such that the pair
$(X, D)$ is log-smooth (this last condition means that the hypersurfaces $Y_j$ are non-singular, and that they have transverse intersections). In some cases we have
$$H^0\big(X, E_{k, m}{T}^\star_X\langle D\rangle\otimes A^{-1}\big)\neq 0\leqno (22)$$
where $E_{k, m}{T}^\star_X\langle D\rangle$ is the log version of the space of invariant jet differentials 
of order $k$ and degree $m$. Roughly speaking, the sections of the bundle in (22) are homogeneous 
polynomials in 
$$d^p\log z_{1},\dots ,d^p\log z_{d},  d^p z_{d+1},\dots, d^p z_{n}$$ 
where $p=1,\dots k$ and $z_{1}z_{2}\dots z_{d}= 0$ is a local equation of the divisor $D$.

\medskip

\noindent We have the following result, which is a more general version of Theorem \ref{t8}.

\begin{theorem}\label{t9}
Let $f:\bC\setminus E\to X\setminus D$ be a non-algebraic, holomorphic map. 
If the parabolic Riemann surface $\cY:= \bC\setminus E$ verifies
$$\mathfrak{X}_\sigma(r)= o\big(T_f(r)\big)$$ 
then for any invariant log-jet differential
$$\cP\in H^0\big(X, E_{k, m}{T}^\star_X\langle D\rangle\otimes A^{-1}\big)$$
we have $\cP(f^\prime,\dots, f^{(k)})\equiv 0$.
\end{theorem}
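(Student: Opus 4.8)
The plan is to run the contradiction argument already used for Theorems \ref{t7} and \ref{t8}, the only new point being the bookkeeping forced by the logarithmic poles along $D$. Since $f(\cY)\subset X\setminus D$, the $k$-jet lift of $f$ avoids the locus where the coefficients of a log-jet differential acquire poles, so $\cP(f^\prime,\dots,f^{(k)})$ is a \emph{holomorphic} section of $f^\star(A^{-1})$ over all of $\cY$; assume it is not identically zero. Fix a smooth metric on $A$ with positive curvature $\omega_A$, hence a metric on $A^{-1}$; by the Poincar\'e--Lelong formula $\sqrt{-1}\,\ddbar\log|\cP(f^\prime,\dots,f^{(k)})|^2\geq f^\star\omega_A$ on $\cY$, the omitted term being supported on the zero divisor of the section. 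Integrating against $\frac{dt}{t}$ and applying the Jensen formula (Proposition \ref{p3}), exactly as in the proof of Theorem \ref{t7}, gives
$$\int_{S(r)}\log\big|\cP(f^\prime,\dots,f^{(k)})\big|^2\,d\mu_r\ \geq\ T_{f,\omega_A}(r)+\cO(1),$$
where the ampleness of $A$ is essential.

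The heart of the matter is a matching upper bound. Using that $X$ is projective, one fixes a finite covering of $X$ by coordinate charts in which $D$ has an equation $z_1\cdots z_d=0$, together with a finite family of meromorphic functions $u_1,\dots,u_M$ on $X$ adapted to $D$ (so that on each chart the $z_i$ are, up to units, ratios of the $u_\nu$, and the $u_\nu$ restrict to local coordinate systems). Writing $\cP$ locally as a homogeneous polynomial of weighted degree $m$ in $d^p\log z_1,\dots,d^p\log z_d,d^pz_{d+1},\dots,d^pz_n$ with holomorphic coefficients (locally bounded after trivialising $A^{-1}$) and substituting the jet of $f$, one obtains, exactly as in \cite{Dem1} and in the proof of Theorem \ref{t8}, a pointwise estimate
$$\log^+\big|\cP(f^\prime,\dots,f^{(k)})\big|\ \leq\ C\sum_{\nu=1}^{M}\sum_{l=1}^{k}\log^+\big|d^l\log(u_\nu\circ f)\big|+\cO(1)$$
on $\cY$; the role of the log-modification $E_{k,m}T^\star_X\langle D\rangle$ is precisely that the only symbols that could be singular along $D$ are the logarithmic ones $d^p\log z_i$, which upon restriction to $f$ become logarithmic derivatives of the nowhere-zero holomorphic functions $z_i\circ f$ and hence produce no uncontrolled contribution. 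Since $f$ omits $D$, each $u_\nu\circ f$ is a genuine meromorphic map $\cY\to\bP^1$, so the parabolic logarithmic derivative lemma (Theorem \ref{t5}) applies to it; together with the elementary bound $T_{u_\nu\circ f}(r)\leq C\,T_f(r)+\cO(1)$ this yields
$$\int_{S(r)}\log\big|\cP(f^\prime,\dots,f^{(k)})\big|^2\,d\mu_r\ \leq\ C\big(\log T_f(r)+\log r+\mathfrak{X}_\sigma(r)\big)$$
for all $r$ outside a set of finite Lebesgue measure.

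Comparing the two displays gives $T_{f,\omega_A}(r)\leq C\big(\log T_f(r)+\log r+\mathfrak{X}_\sigma(r)\big)$ off a set of finite measure. Since $f$ is non-algebraic it has infinite area --- otherwise the Bishop--Skoda theorem, applied as in the proof of Proposition \ref{p1}, would force the closure of its graph to be an algebraic subvariety --- and therefore $T_{f,\omega_A}(r)\neq\cO(\log r)$, i.e. $\log r=o\big(T_{f,\omega_A}(r)\big)$; combined with the hypothesis $\mathfrak{X}_\sigma(r)=o\big(T_f(r)\big)$ and with $T_f(r)\asymp T_{f,\omega_A}(r)$ up to $\cO(\log r)$, the inequality forces $T_{f,\omega_A}(r)\leq o\big(T_{f,\omega_A}(r)\big)$ as $r\to\infty$, a contradiction. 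Hence $\cP(f^\prime,\dots,f^{(k)})\equiv 0$.

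The step I expect to be the main obstacle is the uniform pointwise upper bound across $D$: one must choose the auxiliary functions $u_\nu$ compatibly with \emph{both} the coordinate charts and the divisor $D$, and then verify carefully that, after substituting the jet of $f$ and passing to logarithmic derivatives, the weighted homogeneity of $\cP$ and the log-structure conspire so that only terms of type $\log^+|d^l\log(u_\nu\circ f)|$ --- controlled by Theorem \ref{t5} --- survive, with in particular no residual multiple of $T_f(r)$ remaining on the right-hand side (which would make the final comparison vacuous). The remaining steps are routine variants of the arguments already carried out for Theorems \ref{t7} and \ref{t8}.
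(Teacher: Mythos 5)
Your proposal is correct and follows essentially the same route as the paper: the authors' own proof of Theorem \ref{t9} is only a two-line remark that the logarithmic-derivative-lemma argument from Theorems \ref{t7} and \ref{t8} survives the logarithmic poles along $D$ (deferring details to \cite{Ru}), and your write-up is a faithful, more detailed expansion of exactly that argument, including the correct observation that ``non-algebraic'' yields infinite area via Bishop--Skoda and hence $\log r = o\big(T_f(r)\big)$.
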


\proof We only have to notice that the "logarithmic derivative lemma"
type argument used in the proof of
the vanishing theorem is still valid in our context, despite of the fact that the jet differential has poles 
along $D$
(see \cite{Ru} for a complete treatment).
Thus, the result follows as above. \qed
\smallskip

\noindent As a corollary of Theorem \ref{t9}, we obtain the following statement.

\begin{corollary}\label{c2}
 Let $(X, D)$ be a pair as above, and let 
$f:\bC\to X$ be a non-algebraic, holomorphic map; we define $E:= f^{-1}(D)$. 
Moreover, we assume that 
there exists an invariant log-jet differential $\cP$ with values in $A^{-1}$ such that
$\cP(f^\prime,\dots, f^{(k)})$ is not identically zero. We denote by $N_E(r)$ the Nevanlinna counting function 
associated to $E$, i.e. $\displaystyle N_E(r)= \int_0^r\frac {dt}{t}{\rm card}\big(E\cap (\sigma< r)\big)$.
Then there exists a constant $C>0$ such that we have 
$$\lim\inf_{r}\frac{N_E(r)}{T_{f}(r)}> C.\leqno (23)$$
Moreover, the constant $C$ is independent of $f$.
\end{corollary}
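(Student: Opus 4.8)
The plan is to argue by contradiction. Suppose that $\liminf_r N_E(r)/T_f(r) = 0$. We will show that the surface $\cY := \bC \setminus E$ then satisfies the hypothesis $\mathfrak{X}_\sigma(r) = o(T_f(r))$ of Theorem \ref{t9}, so that the log-jet differential vanishes on $f$, contradicting the assumption that $\cP(f',\dots,f^{(k)})$ is not identically zero. The first step is to fix the exhaustion function on $\cY$ in the style of Example (2) above: writing $E = (a_j)_{j\geq 1}$ (if $E$ is finite the statement is trivial, since then $\cY$ has finite Euler characteristic and Theorem \ref{t8} already forbids a nonzero $\cP$), choose disjoint Euclidean disks $\bD(a_j, r_j)$ and set $\log\sigma = \log^+|z| + \sum_{j\geq 1}\log^+\frac{r_j}{|z-a_j|}$. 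By the computation reproduced just before Remark 4.x, this gives
$$\mathfrak{X}_\sigma(r) \leq \log r + N_{(a_j)}(r) + \cO(1),$$
where $N_{(a_j)}(r)$ is the counting function of the point set $E$ in the Euclidean metric.

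The second step is the comparison of the two counting functions: the Euclidean counting function $N_{(a_j)}(r)$ of $E$ versus the Nevanlinna counting function $N_E(r) = \int_0^r \frac{dt}{t}\,\mathrm{card}(E\cap(\sigma<r))$ attached to $f$. Since $f:\bC\to X$ here has $\bC$ (not $\bC\setminus E$) as source, the exhaustion relevant to $N_E(r)$ and to $T_f(r)$ is the standard one $\sigma_0 = |z|$ on $\bC$; and $E = f^{-1}(D)$, so $E\cap(|z|<r)$ is exactly the zero set (with the $a_j$ listed without multiplicity) of the sections cutting out $D$ composed with $f$. Thus $N_E(r)$ and the naive counting function $N_{(a_j)}(r) = \int_0^r \mathrm{card}(E\cap \bD(0,t))\frac{dt}{t}$ agree up to a bounded term. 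Combining with the previous display,
$$\mathfrak{X}_\sigma(r) \leq \log r + N_E(r) + \cO(1).$$

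The third step closes the argument. If $\cP(f',\dots,f^{(k)})\not\equiv 0$, then $f$ is non-algebraic and its image is not contained in $D$, so by the first main theorem (Theorem \ref{FMT}) $T_f(r) \gg \log r$; in particular $\log r = o(T_f(r))$. Now if $\liminf_r N_E(r)/T_f(r) = 0$, then along a sequence $r_m\to\infty$ we have $N_E(r_m) = o(T_f(r_m))$, hence $\mathfrak{X}_\sigma(r_m) = o(T_f(r_m))$ along that sequence — which is exactly what is needed to run the contradiction in the proof of Theorem \ref{t7}/\ref{t9} (the calculus lemma there only needs the failure of the estimate on a set of positive measure, and a $\limsup$ hypothesis suffices, so a subsequence along which $\mathfrak{X}_\sigma/T_f\to 0$ forces $T_f(r)\leq C(\log T_f(r)+\log r+\mathfrak{X}_\sigma(r))$ infinitely often, contradicting $T_f(r)\gg\log r$). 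Therefore $\cP(f',\dots,f^{(k)})\equiv 0$, contrary to hypothesis, and (23) follows.

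The main obstacle I expect is the second step: being careful that the "weighted mean Euler characteristic" bound coming from the chosen $\sigma$ on $\cY = \bC\setminus E$ is genuinely controlled by the Nevanlinna counting function $N_E(r)$ that is defined using the \emph{ambient} exhaustion on $\bC$ and the map $f$ — i.e. reconciling the two roles of "$\sigma$" (the exhaustion of the punctured plane used in Theorem \ref{t9}, versus the exhaustion of $\bC$ implicit in $N_E$ and $T_f$). One must check that multiplicities and the $\log^+ r_j/|z-a_j|$ corrections contribute only $\cO(\log r)$ or are absorbed, which uses the freedom in choosing the radii $r_j$ and the elementary estimate $\int_{\bB(t)}d\sigma\wedge d^c\sigma = t^2$ recorded in (24); the rest is a direct application of the vanishing theorem already proved.
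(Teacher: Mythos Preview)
Your proposal is correct and follows essentially the same approach as the paper: take the exhaustion of $\cY=\bC\setminus E$ from example (2), use the bound $\mathfrak{X}_\sigma(r)\leq \log r + N_E(r)+\cO(1)$ established there, and feed this into the inequality $T_f(r)\leq C\big(\log T_f(r)+\log r+\mathfrak{X}_\sigma(r)\big)$ coming from the proof of the vanishing theorem to derive a contradiction with $\liminf_r N_E(r)/T_f(r)=0$. The paper's own proof is exactly this, stated in two lines; your worry about reconciling the two exhaustions is handled by example (2) itself, where the bound on $\mathfrak{X}_\sigma$ is already expressed via the Euclidean counting function $\sum_{|a_j|<r}\log(r/|a_j|)$, which is $N_E(r)$ up to $\cO(1)$.
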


\begin{rem}{\rm
Observe that \emph{we are not} in the situation of Theorem 4.3, since the image of $f$ can intersect the support of the divisor $D$. Actually the main point in the previous statement is to analyze the intersection of $f$ with $D$. 
}
\end{rem}

\begin{proof} Let $\sigma$ denote the parabolic exhaustion associated to $E$ as in example 2 above, i.e. 
By the proof of the vanishing theorem 
we infer the existence of a constant $C> 0$ such that the inequality
$$T_{f}(r)\leq C^{-1}\big(\log T_{f}(r)+ \cO(\log r)+ \mathfrak{X}_\sigma(r)\big)$$ 
holds as $r\to\infty$ for any map $f$ such that 
$\cP(f^\prime,\dots, f^{(k)})$ is not identically zero. On the other hand, we have $\mathfrak{X}_\sigma(r)= N_E(r)$,
so the relation (23) is verified. The fact that $C$ is independent of $f$ can be seen directly from the previous 
inequality.
\end{proof}

\medskip

\begin{rem}\label{rem}{\rm
  It turns out that there is another important class of Riemann surfaces for which we can
  establish a vanishing theorem of type \ref{t8}. Let $f:\bD\to X$ be a holomorphic map defined on a unit disk, which satisfies the growth condition
  $\displaystyle \sup_{r< 1}\frac{T(f,r)}{\log\frac{1}{1-r}}= \infty$. Then {the
  vanishing theorem \ref{t8} holds for $f$, i.e. given any polynomial}
$$\cP\in H^0\big(X, E_{k, m}{T}^\star_X\otimes A^{-1}\big)$$
  we have $\cP(f^\prime,\dots, f^{(k)})\equiv 0$. The proof is obtained by the same
  arguments as \ref{t8}, given the following result due to M. Tsuji \cite{Tsuji},
  page 222.
  \begin{theorem}\label{tsuji_log}
    Let $\psi:\bD\to \bP^1$ be a holomorphic map. We have
    $$m\left(\frac{\psi^\prime}{\psi}, r\right)= \cO\left(\log\frac{1}{1-r}\right)+ \cO\left(\log T(r)\right)$$    
    for all $r< 1$ except a set of intervals $(I_m)$ such that $\displaystyle
    \int_{\cup_m I_m}\frac{dr}{1-r}<\infty$.
\end{theorem}    
}\end{rem}

\noindent The previous remark \ref{rem} motivates the following ad-hoc notion.

\begin{defn}\label{admiss}
  A Riemann surface $\cY$ is called admissible if it satisfies one of the following two requirements.
  \begin{enumerate}

  \item[(a)] $\cY$ is parabolic.
    \smallskip

  \item[(b)] There exists a compact complex manifold $X$ and a holomorphic map
    $\psi:\bD\to X$ such that $\psi(\bD)\subset \cY\subset X$ and such that
    $$\sup_{r< 1}\frac{T(\psi, r)}{\log\frac{1}{1-r}}= \infty.$$
  \end{enumerate}  
\end{defn}  
\smallskip

\noindent We notice that the term $\displaystyle \log\frac{1}{1-r}$ has a clear interpretation: it is the 
characteristic function of the identity map $\bD\to (\bD, \omega_P)$, where $\omega_P$ is the Poincar\'e metric on the disk $\bD$. Thus the hypothesis (b) means that the growth order of $\psi$ is much faster 
compared to the weighted area of $\bD$ measured with respect to the Poincar\'e metric. Observe that when $X$ is Kobayashi hyperbolic, there are no maps satisfying condition (b).

\medskip

\subsection{A few examples} At the end of this paragraph we will discuss some examples of parabolic surfaces; we will try to emphasize in particular the properties of
the function $\mathfrak{X}_\sigma(r)$.
\smallskip

\noindent (1) Let $E\subset \bC$ be a finite subset of the complex plane. 
Define 
$$\log \sigma:= \log^+|z|+ \sum_{a\in E}\log^+\frac{r}{|z-a|}$$
for $r> 0$ small enough. Then 
clearly we have $\mathfrak{X}_\sigma(r)= \cO(\log r)$ for $\cY:= \bC\setminus E$.

\smallskip

\noindent (2) We treat next the case of $\cY:= \bC\setminus E$, where $E= (a_{j})_{j\geq 1}$
is a closed, countable set of points in $\bC$. As we will see, in this case it is natural to use the Jensen 
formula without assuming that the support of the measure $dd^c\log \sigma$ is compact, see 
Remark 3.2.

Let $(r_{j})_{{j\geq 1}}$ be a sequence of positive real numbers, such that the Euclidean disks 
$\bD(a_{j}, r_{j})$ are disjoints. As in the preceding example, we define the exhaustion function $\sigma$ such that 
$$\log \sigma=  \log^+ \vert z\vert+ \sum_{j\geq 1}
\log^+\frac{r_j}{\vert z- a_{j}\vert};$$
 the difference here is that $d\mu_r$ is no longer a probability measure. 
 However, we have the following inequality
 $$\frac{1}{\sigma}\Big\vert \frac{\partial \sigma}{\partial z}\Big\vert\leq 
\Big(\sum_{j\geq 1}\frac{1}{\vert z- a_{j}\vert}\chi_{\bD(a_{i}, 1)}+ 
\frac{1}{\vert z\vert}\chi_{(\vert z\vert > 1)}\Big)$$
which holds true on $\cY$.
On the parabolic sphere $S(r)$ we have $\vert z\vert < r$ and 
$\displaystyle \vert z- a_j\vert 
> \frac{r_j}{r}$ so that we obtain 
\begin{equation} 
\begin{split}
\mathfrak{X}_\sigma (r) \leq & \log r+ \!\!\!\sum_{\sigma(a_j+ r_je^{i\theta_j})< r}
\log \frac{r}{a_j}\leq \\
\leq & 
\log r+ \sum_{\vert a_j\vert < r}
\log \frac{r}{a_j}: = \log r+ N_{(a_j)_{j\geq 1}}(r).
\end{split}
\nonumber
\end{equation}
Therefore, we can bound the Euler characteristic by the 
counting function for ${(a_j)_{j\geq 1}}$.
\smallskip

\begin{rem} {\rm Let $\cY$ be a parabolic Riemann surface, with an exhaustion
$\sigma$ normalized as in (1).
In more abstract terms, the quantity $\mathfrak{X}_\sigma (r)$ can be estimated along the following lines, by using the same kind of techniques as in the 
proof of the logarithmic derivative lemma.
\begin{equation} 
\begin{split}
\mathfrak{X}_\sigma (r) = & \int_{S(r)}\log^+\Big\vert \frac{\partial \sigma}{\partial z}\Big\vert^2d\mu_r\leq \\
\leq & \frac{2}{\varepsilon}\log^+\int_{S(r)}\Big\vert \frac{\partial \sigma}{\partial z}\Big\vert^\varepsilon d^c\log \sigma =  
\frac{2}{\varepsilon}\log^+\frac{1}{r}\int_{S(r)}\Big\vert \frac{\partial \sigma}{\partial z}\Big\vert^\varepsilon d^c\sigma\leq \\
\leq & \frac{2}{\varepsilon}\log^+\frac{1}{r}
\frac{d}{dr}\int_{B(r)}\Big\vert \frac{\partial \sigma}{\partial z}\Big\vert^\varepsilon d\sigma\wedge d^c\sigma \leq \\
\leq & \frac{2}{\varepsilon}\log^+\frac{1}{r}
\Big(\int_{B(r)}\Big\vert \frac{\partial \sigma}{\partial z}\Big\vert^{2+ \varepsilon} 
dz\wedge d\overline z\Big)^{1+ \delta} \leq \\
\leq & \frac{2}{\varepsilon}\log^+{r^\delta}
\Big(\int_{B(r)}\frac{1}{r}\Big\vert \frac{\partial \sigma}{\partial z}\Big\vert^{2+ \varepsilon} 
dz\wedge d\overline z\Big)^{1+ \delta} \leq \\
\leq & \frac{2\delta}{\varepsilon}\log^+{r}+ \frac{(1+\delta)^2}{\varepsilon}
\log^+
\Big(\int_1^r\frac{dt}{t}\int_{B(t)}\Big\vert \frac{\partial \sigma}{\partial z}\Big\vert^{2+ \varepsilon} 
dz\wedge d\overline z\Big).
\end{split}
\nonumber
\end{equation}
}
\end{rem}
On the other hand, we remark that we have
$$\int_{\bB(t)}d\sigma\wedge d^c\sigma= t^2; \leqno(24)$$
this can be verified e.g. by considering the derivative of the left hand side of the expression (24) above.
However, this does not means that the surface $\cY$ is of finite mean Euler characteristic, since 
we cannot take $\varepsilon= 0$ in our previous computations--and as the example 
(2) above shows it, there is a good reason to that.   
\medskip

\section{Bloch Theorem}

Let $T$ be a complex torus, and let 
$X\subset T$ be an irreducible analytic set. In some sense, the birational geometry of $X$ was completely understood since the work of Ueno \cite{Ueno}. He has established the following result.

\begin{theorem}\label{t10}
\cite{Ueno} Let $X$ be a subvariety of a complex torus $T$. Then there exist a complex torus $T_1\subset T$, a projective variety $W$ and an abelian variety $A$ such that 

\begin{enumerate}

\item We have $W\subset A$ and $W$ is a variety of general type;
\smallskip

\item There exists a dominant (reduction) map $\cR: X\to W$ whose general fiber is isomorphic to $T_1$.

\end{enumerate}
\end{theorem}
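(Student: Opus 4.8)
The plan is to reduce Ueno's structure theorem to three ingredients: the Poincaré reducibility theorem for abelian varieties, the theory of the Albanese map, and the fact that a subvariety of an abelian variety has Kodaira dimension equal to the dimension of the image of its Gauss/Albanese-type map.

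\textbf{Step 1: Construct the reduction torus.} Given the irreducible $X\subset T$, I would consider the stabilizer subgroup
$$T_1:=\{t\in T : t+X=X\},$$
which is a closed algebraic subgroup of $T$; let $T_1^0$ be its identity component, a subtorus of $T$. By construction $X$ is a union of $T_1^0$-orbits, so it descends to a subvariety $\overline X$ of the quotient torus $T/T_1^0$, and the quotient map restricts to a map $X\to\overline X$ whose general (indeed every) fiber is a translate of $T_1^0$, hence isomorphic to $T_1:=T_1^0$. This is the map $\cR$ of the statement, once we identify $\overline X$ with a subvariety $W$ of an abelian variety; the general fiber being isomorphic to $T_1$ is then immediate.

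\textbf{Step 2: Show $\overline X$ has trivial stabilizer.} The key point is that the stabilizer of $\overline X$ in $T/T_1^0$ is trivial: if a nontrivial subtorus fixed $\overline X$, its preimage in $T$ would strictly contain $T_1^0$ and fix $X$, contradicting maximality. So we have reduced to the case of a subvariety with trivial stabilizer, and must show such a variety is of general type.

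\textbf{Step 3: Trivial stabilizer implies general type.} Here I would invoke the standard fact (going back to Ueno, and provable via the Gauss map $\gamma:\overline X\dashrightarrow \mathrm{Gr}$ sending a smooth point to its tangent space translated to the origin, viewed inside the Grassmannian of the fixed vector space $T_0(T/T_1^0)$): the generic fiber of $\gamma$ is a translate of the stabilizer subtorus, hence $\gamma$ is generically finite onto its image when the stabilizer is trivial. Since $\mathcal O_{\mathrm{Gr}}(1)$ pulls back to a big line bundle related to $K_{\overline X}$ — more precisely, one shows $K_{\overline X}$ is big by comparing it with the pullback of an ample bundle under the log-Gauss map on a resolution — $\overline X$ is of general type. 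I would cite Ueno \cite{Ueno} for this implication rather than reprove it, since the paper only uses the statement as a black box.

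\textbf{Step 4: Move to an abelian variety.} Finally, since $\overline X$ with its trivial stabilizer is of general type, a theorem of Ueno says it sits inside an abelian variety: the Albanese torus of a resolution of $\overline X$ is abelian (being both a torus and of general type forces projectivity, or one argues via the theta divisor), and $\overline X$ embeds into a quotient of it. Set $A$ to be that abelian variety and $W:=\overline X\subset A$. Then (1) and (2) hold.

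\textbf{Main obstacle.} The genuinely nontrivial step is Step 3 — the implication "trivial stabilizer $\Rightarrow$ general type." The construction of $T_1$ via the stabilizer in Step 1 is formal, but controlling the Kodaira dimension requires the Gauss-map argument and a careful analysis of its generic fiber (the classical Ueno fibration), which is why I would lean on \cite{Ueno} here. A secondary subtlety is verifying that the quotient $\overline X$ is genuinely \emph{projective} (so that $W$ is a projective variety as claimed), which uses that general type plus torus structure on the Albanese forces algebraicity.
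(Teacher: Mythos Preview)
The paper does not give a proof of this theorem at all: it is stated with the attribution \cite{Ueno} and used as a black box in the proof of Theorem~\ref{t11}. So there is no proof in the paper to compare your proposal against.

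That said, your sketch is a reasonable outline of Ueno's original argument, and the identification of Step~3 as the substantive part is correct. One small comment on Step~4: your phrasing is a bit tangled. You already have $\overline X$ embedded in the complex torus $T/T_1^0$; once you know $\overline X$ is of general type it is projective, and then the universal property of the Albanese variety gives a factorization $\overline X\hookrightarrow \mathrm{Alb}(\overline X)\to T/T_1^0$ of the inclusion, so $A:=\mathrm{Alb}(\overline X)$ (an abelian variety since $\overline X$ is projective) and $W:=\overline X$ do the job. The sentence ``being both a torus and of general type forces projectivity'' is not what you mean --- $\overline X$ is not a torus --- and should be replaced by the straightforward Albanese argument just described.
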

 
\noindent Thus, via this theorem the study of an arbitrary submanifold $X$ of a torus is reduced to the 
case where $X$ is of general type.
\smallskip

We analyze next the case where $X$ is the Zariski closure of a {\sl parabolic Riemann surface}, or more generally, of an admissible Riemann surface in the sense of the Definition \ref{admiss}. We recall that the condition $(\dagger)$ on page 4 means that 
the area of the surface is infinite and $\displaystyle \mathfrak{X}_\sigma(r)= {\sl o}(T_f(r))$.
The following result can be seen as the complete analogue of the
classical theorem of Bloch \cite{Bloch}.

\begin{theorem}\label{t11} Let $X\subset T$ be a submanifold of a complex torus $T$. We assume that 
$X$ is the Zariski closure of an admissible Riemann surface ${\mathcal Y}$ for which $(\dagger)$ holds. Then $X$ is the translate of a sub-torus of $T$, or there is a variety
of general type $W$ and map $\cR: X\to W$ such that $\cR\circ\varphi$ has finite area.
\end{theorem}




\proof First we assume that $\cY$ verifies the hypothesis (a) in the definition 
Definition \ref{admiss}. We will follow the approach in \cite{Bloch}, \cite{Siu1}; starting with $V:= T_{X}$, we consider the tower of 
directed manifolds $(T_k, V_{k})_{k\geq 0}$, whose construction was recalled in the first part of this article.
Since $T$ is flat, we have
$$T_k= T\times \cR_{n, k}$$
where we recall that here $\cR_{n, k}$ is the ``universal'' rational homogeneous
variety $\displaystyle \bC^{nk}\setminus \{0\}/\bG_k$, cf. \cite{Dem2}. The curve 
$f: \cY\to T$ lifts to $T_k$, as already explained. We denote by
$$f_k: \cY\to T_k$$ 
the lift of $f$. 

Let $X_k$ be the Zariski closure of the image of $f_k$, and let
$$\tau_k: X_k\to \cR_{n, k}$$
be the composition of the injection $X_k\mapsto T_k$ with the projection on the second factor 
$T_k\to \cR_{n,k}$.
\smallskip

\noindent Then either the dimension of the generic fiber of $\tau_k$ 
is strictly positive 
for all $k$, or there exists a value of $k$ for which the map $\tau_k$ is generically finite. The geometric counterpart of each of these eventualities is 
analyzed along the two following statements, due to A. Bloch. After this is done, we will show that 
the proof of \ref{t11} follows almost immediately.  

\begin{proposition}\label{p4} We assume that for each $k\geq 1$
the fibers of $\tau_k$ are positive dimensional. Then the dimension of the 
subgroup $A_X$ of $T$ defined by
$$A_X:= \{a\in T: a+ X= X\}$$
is strictly positive.

\end{proposition}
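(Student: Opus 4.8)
The plan is to exploit the structure $T_k = T\times\cR_{n,k}$ and the positive-dimensionality of the fibers of $\tau_k$ to produce, after passing to a suitable level $k$, a nontrivial subtorus that stabilizes $X$. First I would fix $k$ large enough that $X_k$ has maximal dimension among all the $X_j$ (equivalently, so that the projections $X_{k+1}\to X_k$ are generically finite, or birational onto the relevant components), and look at the fibration $\tau_k\colon X_k\to \cR_{n,k}$. By hypothesis the generic fiber $F$ has $\dim F\geq 1$. The key observation is that, because $f_k$ is tangent to $V_k$ and $T_k$ splits, the fiber $F$ — sitting inside $T\times\{\text{pt}\}\cong T$ — is itself an analytic subset of the torus $T$ which carries the directed structure coming from $V_k$; in particular, reapplying the vanishing Theorem \ref{t7} (or \ref{t11}'s machinery) to the curve $f_k$, no jet differential with values in a negative bundle can be nonzero along $f$, so $F$ cannot be of general type. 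Then by Ueno's Theorem \ref{t10} applied to $F\subset T$, the nonvanishing of $\dim F$ together with the absence of a general-type reduction of positive dimension forces the reduction map $\cR\colon F\to W$ to have $\dim W=0$, i.e. $F$ is (a translate of) a positive-dimensional subtorus $T_1\subset T$.

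Having produced a positive-dimensional subtorus $T_1$ appearing as a generic fiber of $\tau_k$, I would next argue that $T_1$ (or a subtorus of it) actually stabilizes $X$ itself. The point is that the generic fiber of $\tau_k$ is obtained from $X$ by a translation-covariant construction: the lift $f_k$ and the variety $X_k$ depend on $X$ equivariantly under the translation action of $T$ on itself, since the tower $(T_k,V_k)$ is built functorially from $(T,T_X)$ and translations of $T$ act on the whole tower compatibly with $\tau_k$. Concretely, I would show that for $a\in T_1$ the translate $a+X$ has the same lift data as $X$ — its $k$-jet lift lands in the same fiber of $\tau_k$ — and hence, by irreducibility and the maximality of $\dim X_k$, we get $a+X=X$. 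This identifies $T_1$ (up to finite index, then after taking the connected component of the identity of the stabilizer) with a subtorus contained in $A_X$, giving $\dim A_X>0$.

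The main obstacle I expect is the second step: making precise and rigorous the claim that the generic fiber of $\tau_k$ \emph{is} a translate of a subtorus that stabilizes $X$, rather than merely a subtorus that happens to sit inside some fiber. This requires carefully tracking how the Demailly tower and the lift $f_k$ transform under translations of $T$, and using the \emph{finiteness of mean Euler characteristic} and infinite area hypotheses to guarantee that $X_k$ is genuinely the Zariski closure of $f_k(\cY)$ at every level (so that the dimension count is stable and the fiber $F$ really inherits the directed structure). A secondary technical point is ensuring that one may pass to a single level $k$ at which \emph{all} the relevant fibers are positive-dimensional simultaneously; this is where the standing assumption "for each $k\geq 1$ the fibers of $\tau_k$ are positive dimensional" is used, letting me choose $k$ realizing $\max_j \dim X_j$ and then conclude the fiber over a generic point is an honest subtorus by the Ueno dichotomy. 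Once these are in place, the conclusion $\dim A_X>0$ follows formally.
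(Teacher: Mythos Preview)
Your approach differs substantially from the paper's, and the step you take for granted---that the generic fiber $F$ of $\tau_k$ is a translate of a subtorus---already contains a gap. You appeal to the vanishing Theorem~\ref{t7} applied to $f_k$ to rule out $F$ being of general type, but the vanishing theorem constrains the curve $f_k$, not the fiber $F$; the curve $f_k$ does not lie in any single fiber of $\tau_k$, and a jet differential on $F$ has no mechanism for producing one on $X$ (or on $X_k$) that vanishes along $f$. Invoking ``\ref{t11}'s machinery'' here is circular, since Proposition~\ref{p4} is precisely an input to Theorem~\ref{t11}. Without this step, Ueno's theorem alone only tells you that $F$ is fibered by subtori over a possibly positive-dimensional general-type base, so you cannot conclude $F$ is itself a subtorus. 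The second step, which you correctly flag as an obstacle, is indeed also unsubstantiated as written.

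The paper's argument avoids Ueno entirely at this stage and is much more direct. Fix a point $x_0\in\cY$ with $f_k(x_0)=(z(0),\lambda)$ regular in $X_k$ for every $k$. The positive-dimensional fiber through this point yields a curve $t\mapsto(z(t),\lambda)\subset X_k$ with \emph{fixed} jet class $\lambda$; setting $a_t:=z(0)-z(t)$, translation invariance of jets on a torus gives $f_k(x_0)\in J^k(a_t+X)$. Hence the analytic set
\[
\Xi_k:=\{a\in T:\ f_k(x_0)\in J^k(X)\cap J^k(a+X)\}
\]
is positive-dimensional for every $k$. The decreasing chain $\Xi_1\supset\Xi_2\supset\cdots$ stabilizes at some $\Xi_{k_0}$ by Noetherianity; for $a\in\Xi_{k_0}$ the curve $f$ is then tangent to $a+X$ to infinite order at $x_0$, hence $f(\cY)\subset a+X$ by the identity principle, hence $X=a+X$ since $f$ is Zariski dense in $X$. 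This sidesteps both your obstacles at once: no claim about the intrinsic geometry of $F$ is needed, and the passage from ``fiber'' to ``stabilizer'' is handled by infinite-order tangency and analytic continuation rather than by an equivariance argument.
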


\proof We fix a point $x_0\in \cY$ such that $f_k(x_0)$ is a regular point of $X_k$, for each $k\geq 1$. 
It is clear that such a point exists, since the image of the curve $f_k$ is dense in $X_k$, hence the inverse 
image of the singular set of $X_k$ by $f_k$ is at most countable. By the same argument, we can assume that the fiber of the map $\tau_k$ through $f_k(x_0)$ is ``generic'', meaning that it is positive dimensional.

By this choice of the point $x_0$ the fiber 
$$F_{k, 0}:= \tau_k^{-1}\tau_k \big(f_k(x_0)\big)$$
is positive dimensional, for each $k\geq 1$. Thus, there exists a curve $\gamma_k: (\bC, 0)\to X_k$ such that 
$$\gamma_k(t)= \big(z(t), \lambda\big)$$
such that $f_k(x_0)$ corresponds to the couple $\big(z(0), \lambda\big)$ according to the decomposition of $T_k$ and $\lambda$ is fixed (independently of $t$).
In particular, this shows that the dimension of the 
analytic set 
$$\Xi_k:= \{a\in T: f_k(x_0)\in J^k(X) \cap J^k(a+ X)\}$$
is strictly positive. This is so because for each $t$ close enough to zero 
we can define an element $a_t$ by the equality 
$$f_k(x_0)= a_t+ z(t)$$
and thus $a_t\in \Xi_k$ since the curve $t\to z(t)$ lies on $X$.
\smallskip

Next we see that we have the 
sequence of inclusions 
$$\Xi_1\supset \Xi_{2}\supset ... \supset \Xi_{k}\supset \Xi_{k+1}\supset ...$$
therefore by Noetherian induction there exists a large enough positive integer 
$k_0$ such that 
$$\Xi_k= \Xi_{k+1}$$
for each $k\geq k_0$, and such that the dimension of $\Xi_k$ is strictly positive. 
But this means that for every $\displaystyle a\in \Xi_{k_0}$, the 
image of the curve $f$ belongs to the translation 
$a+ X$ of the set $X$ (because this curve is tangent to an infinite order to the 
$a$-translation of $X$). Thus, given that the curve $f$ is Zariski dense, we will have $X= X+a$ .
The proposition is proved. \qed

\medskip

\noindent In the following statement we discuss the other possibility.

\begin{proposition}\label{p5}
Let $k$ be a positive integer such that the map
$$\tau_k: X_k\to \cR_{n, k}$$
has finite generic fibers. Then there exists a jet differential 
${\mathcal P}$ of order $k$ with values in the dual of an ample line bundle, and 
whose restriction to $X_k$ is non-identically zero.
\end{proposition}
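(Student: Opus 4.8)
The plan is to construct $\cP$ directly as a section of a twisted tautological line bundle on $X_k$, exploiting that $T$ is parallelizable. Recall that, $T$ being flat, the Semple tower splits as $T_k=T\times\cR_{n,k}$, and a straightforward induction shows that the tautological bundles $\cO_{T_j}(-1)$ — and hence all the weighted bundles $\cO_{T_k}(\mathbf a)$, $\mathbf a=(a_1,\dots,a_k)$ — are pulled back from the second factor. Restricting to $X_k\subset T_k$ this yields $\cO_{X_k}(\mathbf a)=\tau_k^\star\cO_{\cR_{n,k}}(\mathbf a)$, so that all of the positivity of $\cO_{X_k}(\mathbf a)$ is concentrated in the $\cR_{n,k}$-direction.

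By Demailly's work (see \cite{Dem2}) one may pick weights $a_1\gg a_2\gg\dots\gg a_k>0$ for which $\cO_{\cR_{n,k}}(\mathbf a)$ is ample on the smooth rational projective variety $\cR_{n,k}$. This is the point where the hypothesis enters: if $\tau_k$ has finite generic fibers, then $\tau_k$ is generically finite onto its image, hence $L:=\cO_{X_k}(\mathbf a)=\tau_k^\star\cO_{\cR_{n,k}}(\mathbf a)$ is big and nef on $X_k$. In particular $X_k$ is Moishezon, and since it is also Kähler (being contained in $T\times\cR_{n,k}$), both $X_k$ and its image $X$ under the first projection are projective; moreover $h^0(X_k,L^{\otimes\mu})\geq c\,\mu^{\dim X_k}$ for some $c>0$ and all $\mu\gg 0$.

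It remains to absorb the negative twist. Fix a very ample line bundle $A$ on $X$, a member $H\in|A|$, and set $D:=\pi_k^{-1}(H)$, an effective divisor of $X_k$ (note $\pi_k\colon X_k\to X$ is onto, its image being Zariski dense and closed). From the exact sequence $0\to L^{\otimes\mu}\otimes\pi_k^\star A^{-1}\to L^{\otimes\mu}\to L^{\otimes\mu}|_D\to 0$ together with the bigness of $L$ we get
$$h^0\big(X_k,\,L^{\otimes\mu}\otimes\pi_k^\star A^{-1}\big)\ \geq\ c\,\mu^{\dim X_k}-\cO\big(\mu^{\dim X_k-1}\big)\ >\ 0\qquad(\mu\gg 0),$$
because $h^0(D,\,L^{\otimes\mu}|_D)=\cO(\mu^{\dim X_k-1})$ simply as $\dim D=\dim X_k-1$. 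Choosing $0\neq\cP\in H^0\big(X_k,\,L^{\otimes\mu}\otimes\pi_k^\star A^{-1}\big)$ and recalling that $\pi_{k*}L^{\otimes\mu}$ is a subsheaf of $E_{k,m}T^\star_X$ with $m=\mu\sum_{j=1}^k ja_j$, the projection formula turns $\cP$ into a nonzero section of $E_{k,m}T^\star_X\otimes A^{-1}$, i.e. an invariant jet differential of order $k$ and degree $m$ with values in the dual of the ample bundle $A$; by construction $\cP$ does not vanish identically on $X_k$. (Together with the vanishing Theorem \ref{t7} and the Zariski density of $f$ in $X$, this will contradict the infinite area assumption — which is how the proposition enters the proof of Theorem \ref{t11}.)

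The delicate point is the first paragraph: identifying $\cO_{X_k}(\mathbf a)$ with a $\tau_k$-pullback and transferring bigness across the generically finite morphism $\tau_k$. Once this is secured the negative twist is routine, the $\dim X_k$-dimensional growth of $h^0(X_k,L^{\otimes\mu})$ overwhelming the $(\dim X_k-1)$-dimensional contribution of $D$. A minor technical wrinkle is the possible singularity of $X_k$: one either passes to a resolution throughout (the jet differential is recovered over the smooth locus, which meets the image of $f$) or invokes the relevant asymptotic Riemann–Roch statements directly on the Moishezon variety $X_k$.
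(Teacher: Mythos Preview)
Your proof is correct and follows essentially the same route as the paper: both identify $\cO_{X_k}(1)$ (or a weighted version $\cO_{X_k}(\mathbf a)$) with the $\tau_k$-pullback of the corresponding bundle on $\cR_{n,k}$, use that this bundle is big on $\cR_{n,k}$ (Demailly), and conclude bigness on $X_k$ from generic finiteness of $\tau_k$. Your version is more detailed---you spell out the exact-sequence argument for absorbing $A^{-1}$ and supply the projectivity of $X$ via Moishezon $+$ K\"ahler---whereas the paper simply asserts that bigness of $\cO_{X_k}(1)$ is equivalent to $H^0(X_k,\cO_k(m)\otimes A^{-1})\neq 0$ for $m\gg 0$.
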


\proof 

\noindent The proof relies on the following claim: {\sl the restriction to $X_k$ 
of the tautological bundle ${\mathcal O}_k(1)$ associated to $T_k$ is big}.
This condition is equivalent to the fact that for $m\gg 0$ large enough we have
$$\displaystyle H^0\big(X_k, {\mathcal O}_k(m)\otimes A^{-1}\big)\neq 0.$$
Indeed, in the first place we know that $\displaystyle {\mathcal O}_k(1)= \tau_k^\star \cO(1)$
where $\cO(1)$ is the tautological bundle on $\cR_{n, k}$. It also turns out that 
$\cO(1)$ is big (cf. \cite{Dem2}). Since the generic fibers of $\tau_k$ are of dimension zero, the 
inverse image of $\cO(1)$ is big. Indeed, the growth of the space of sections of $\tau_k^\star \cO(m)$
as $m\to \infty$
obtained by pull-back
proves it. The claim is therefore established.\qed

\bigskip

\noindent We show next that Theorem \ref{t11} follows from the two statements above. Let 
$$\varphi: \cY\to T$$ be a non-constant 
holomorphic map from a parabolic surface $\cY$ of finite mean Euler characteristic into a complex torus $T$. We denote by $X$
the Zariski closure of its image. Thanks to the result of Ueno we can consider the reduction map
$\cR:X\to W$ associated to $X$. We claim that under the hypothesis of Theorem \ref{t11} the area of the map $\cR\circ \varphi$ is finite. 

If $W$ is a point, this means that $X$ is the translate of a sub-torus. If this is not the case, then we can assume that $X$ is of general type and the area is infinite.
By the vanishing theorem \ref{t7} we see that the 
hypothesis of Proposition \ref{p5} will never be verified, for any $k\geq 1$ (otherwise $X_k$ would not be the Zariski closure of the lift of the curve). 
Hence the hypothesis of the Proposition \ref{p4}
are verified, and so $X$ will be invariant by a positive dimensional sub-torus of $T$. 
Since $X$ is assumed to be a manifold of general type, its automorphisms group is finite, so this cannot happen. In conclusion the area of
$\cR\circ \varphi$ must be finite and our result is proved. Observe here that the area involved is counted with respect to the parabolic exhaustion.
\medskip

\noindent If $f:\bD\to T$ is an admissible Riemann surface as in (b) of Definition \ref{admiss},
then the argument is basically unchanged, given the Remark \ref{rem}. We leave the details to the interested reader.
\medskip

\noindent If $f:\bC\to T$ is a non-constant holomorphic map of finite area, then $f$ extends to a map $\bP^1\to T$. But any such map must be constant, so we obtain a contradiction. In conclusion, 
Theorem \ref{t11} represents a generalization of classical Bloch theorem.

\begin{rem} {\rm J. Noguchi exhibited a simple example showing that the second possibility in Theorem \ref{t11} could actually occur. This helped us to correct a slip in a previous version of the current article.}
\end{rem}

\medskip

\subsection{Bloch-Ax-Lindemann theorem} As a consequence of the results obtained in the previous 
section, we establish next a refinement of the Ax-Lindemann theorem, cf.
\cite{PZ}, \cite{Ull}, \cite{KUY}. We assume that we have a co-compact lattice $\Lambda\subset \bC^k$ such that the quotient $T: =\bC^k/\Lambda$ is an abelian variety. Let $\pi: \bC^k\to T$ be the projection map.
\smallskip

\noindent Let $V\subset \bC^k$ be a connected analytic subset of pure dimension $p$. We define the following set $\cA(V)\subset V\times V$ consisting of couples of points
$(x, y)\in V\times V$ such that there exists an admissible curve
${\mathcal C}_{xy}\subset V$ with finite Euler characteristic and infinite area
passing through $x$ and $y$, such that the Zariski closure of $\pi(V)$ is a translate of a torus. The closed sets in the Zariski topology we consider in $\bC^k$ are the closed analytic subsets. 
\smallskip

\noindent Then we have the following statement. 

\begin{theorem}\label{bal} Let $V\subset \bC^k$ be a connected analytic set of pure dimension $p$. Then the following assertions hold true.
  \begin{enumerate}
    \smallskip

\item[(1)] If $p=1$ and $V$ is admissible with finite Euler characteristic, then the
  Zariski closure of $\pi (V)$ is the translate of a sub-torus, unless $\cR\circ \pi(V)$ is an algebraic curve of genus greater than one.    
 \smallskip

\item[(2)] If $p\geq 1$,
we assume that the closure $\overline{\cA(V)}^{\rm an}$ of the set $\cA(V)$ with respect to the analytic topology is equal to $V\times V$.
  Then the Zariski closure $\overline{\pi(V)}\subset T$ 
is the translate of a sub-torus.
\end{enumerate}
\end{theorem}

\begin{proof}
  We first consider the case where $V$ is a curve, i.e. $p=1$. 
  Given that $V$ is admissible,
  Theorem \ref{t11} shows that the Zariski closure $X$ of $\pi(V)$ is either the translate of a sub-torus or there exists a map
  $$\cR: X\to W$$ onto a general type subvariety $W$ of an abelian variety $A$
  such that the area of $\cR\circ \pi$ is finite. Assume that the second possibility occur. Then we claim that the image $W$ is a curve. Indeed,
  there exists a linear map $L: \bC^k\to \bC^l$ which induces the morphism
$T\to A$; let $\pi_1: \bC^l\to A$ be the covering map of $A$.   
  The area of $\pi_1\big(L(V)\big)\subset A$ is finite, since $L(V)\subset \bC^l$ is an analytic subset, and $\pi_1$ is a covering map. Therefore $W$ is one-dimensional, and its genus is greater than one because it is of general type. The point (1) is established. 
\smallskip

\noindent Next we consider the case $p> 1$. For every admissible curve ${\mathcal C}_{xy}$ as in the definition of $\cA(V)$, the Zariski closure of
$\pi({\mathcal C}_{xy})$ is the translate of a sub-torus, by definition.
In other words, there exists a linear subspace ${\mathcal L}_{xy}\subset \bC^k$ and a vector $v_{xy}$ such that
$$\overline{\pi\left({\mathcal C}_{xy}\right)}= \pi(v_{xy})+ \pi\left({\mathcal L}_{xy}\right)$$
In particular this means that for any $\lambda\in \bC$ we have
$$\displaystyle \lambda \pi(x)+ (1-\lambda)\pi(y)\in \overline{\pi(V)}.$$

\noindent For each complex parameter $\lambda$ we define the map
$$\Pi_\lambda: \bC^k\times \bC^k\to T, \quad \Pi_\lambda(x, y):= \lambda \pi(x)+ (1-\lambda)\pi(y).$$ 
Our previous considerations show that we
have $\displaystyle \Pi_\lambda\left(\cA(V)\right)\subset \overline{\pi(V)}$. As a consequence, we infer that
$$\displaystyle \Pi_\lambda\left(\overline{\cA(V)}^{\rm an}\right)\subset \overline{\pi(V)}$$
because $\Pi_\lambda$ is continuous with respect to the analytic Zariski topology. Now the hypothesis of \ref{bal} shows that
$$\displaystyle \Pi_\lambda\left(V\times V\right)\subset \overline{\pi(V)}$$
and this implies that we have
$$(1-\lambda)\overline{\pi(V)}+ \lambda\overline{\pi(V)}\subset \overline{\pi(V)}$$
for any complex value $\lambda\in \bC$. 

We deduce that given any two points $z_1$ and $z_2$ in the Zariski closure $\overline{\pi(V)}$ of the set $V$
there exists a holomorphic map $\varphi:\bC\to \overline{\pi(V)}$ whose image contains $z_j$. It follows that 
$\overline{\pi(V)}$ is the translate of a sub-torus, as we see by applying the Theorem of Ueno quoted above combined with a result due to Y. Kawamata (Theorem 4 in \cite{Kawa}). If $\overline{\pi(V)}$ is not the translate of a sub-torus, then it maps surjectively onto a general type algebraic sub-variety $W$ of an abelian variety $A$. Then Kawamata's result states that the 
set of all translates of abelian subvarieties in $W$ is a proper algebraic subset $Y\subset W$. We take two points $z_1$ and $z_2$ which do not project into $Y$ and which do not belong to the same fiber of 
the map $\cR: \overline{\pi(V)}\to W$. We conclude by Bloch theorem: the Zariski closure of the image of the map $\varphi$ as above in $W$ gives the translation of a sub-torus of $A$ which is contained in 
$W$ but not in $Y$ (hence we obtain a contradiction).
\end{proof}

\medskip

\noindent If $V$ is an algebraic subvariety of $\bC^n$, we obtain a more precise statement, cf. \cite{PZ}, \cite{Ull}, \cite{KUY}.

\begin{theorem}\label{vax} Let $V\subset \bC^k$ be an algebraic variety. Then the Zariski closure of the image 
$\pi(V)$
  is the translate of a sub-torus.
\end{theorem}

\begin{proof} We first assume that the dimension of $V$ is 1. The fact that $V$ is algebraic implies the existence of a linear projection $p:\bC^n\to \bC$
  such that the restriction map $p|_V: V\to \bC$ is proper and ramified in at most a 
  finite number of points. The map $p|_V$ induces a very explicit parabolic structure on $V$ (by pull-back of the usual one on the complex plane).
  With respect to this particular structure we infer that we have
$$\frac{T_f(r)}{\log r}\to \infty \leqno{(\star)}$$
as $r\to \infty$ where $f:V\to T$ is the restriction of the map $\pi: \bC^k\to T$ to $V$. Since  
the Euler characteristic of $V$ is finite, we can apply Theorem \ref{t11}. If the Zariski closure of the image of $V$ is translate of a sub-torus, we are done.

If not, there exists a map $q: T\to A$ from $T$ onto an abelian variety, such that the induced map $\cR: X\to W$ has the property that $\cR\circ f(V)$ has finite area. Then we argue as follows. The map $q$ is induced by a linear application $L: \bC^k\to \bC^l$. If $L(V)$ is constant, then it is trivial to conclude. Assume that this is not the case, so $L(V)$ is an algebraic curve in $\bC^l$
and $(\star)$ does apply; the contradiction obtained ends the proof of Theorem \ref{vax} if $\dim(V)=1$.
\smallskip

\noindent Let $V\subset \bC^k$ be an algebraic subset of arbitrary dimension. In this case we can apply point 2 in Theorem \ref{bal}.\end{proof}

\begin{ex}{\rm \hfill
    \smallskip
    
\noindent $\bullet$ Let $V\subset \bC^k$ be the hypersurface given by the equation
$$w^d+\sum_{j=1}^da_j(z)w^{d-j}= 0$$
where $a_j\in {\mathcal O}(\bC^{k-1})$.  If $k= 2$, then $V$ is a parabolic curve, since it is a finite (proper, ramified) cover of $\bC$.
The argument is immediate: given a bounded subharmonic function on $V$, its direct image (by the projection to $\bC$) must be constant. This implies that the function itself is constant.

The general case follows: let $x= (z_1, w_1)$ and $y= (z_2, w_2)$ be any two points of $V$. We consider the curve ${\mathcal C}_{xy}:= p^{-1}(\gamma_{z_1z_2})\cap V$, where $p: \bC^k\to \bC^{k-1}$ is the projection $(z, w)\to z$ and $\gamma_{z_1z_2}$ is the line determined by $(z_i)_{i=1,2}$ if $z_1\neq z_2$, and if $z_1= z_2$ 
we can take any line passing through the point. Again, the curve ${\mathcal C}_{xy}$ is parabolic.

 We now address the question of finiteness of the Euler characteristic. The function $ u(z):= \log^+|z| $is proper and harmonic out of a compact set of $V$. The critical points out of a compact are the points 
 where the 
 tangent to $V$ is vertical. If this number is finite, then so it is
 the Euler characteristic. By a careful choice of the
 coefficients $a_j$, this can be achieved.
 We see that the hypotheses are satisfied far beyond the algebraic case.

Observe that we have to check that the curves ${\mathcal C}_{xy}$
satisfy the previous conditions on growth of Euler characteristic, only for a Zariski dense set of $(x,y)$. 
 Then $V$ satisfies equally the hypothesis of Theorem \ref{bal}.
 
 It  is easy to construct a transcendental subvariety V , as above containing a Zariski dense sequence of Algebraic curves. Hence it will satify the hypothesis of Theorem 5.6. Indeed consider
 for simplicity
 $$w^d+ a(z)= 0,$$ with $a$ transcendental but which is a polynomial when restricted to
a countable Zariski dense set of lines through the origin. Then the above set $V$ satisfies the assumptions in Theorem 5.6.
  
}\end{ex}


\section{Parabolic Curves Tangent to Holomorphic Foliations}

\subsection{Nevanlinna's Currents Associated to a Parabolic Riemann Surface}

\noindent Let $(\cY, \sigma)$ be a parabolic Riemann surface. 
We fix a K\"ahler metric $\omega$ on $X$, and let $\varphi: \cY\to X$ be holomorphic map. 
For an open set $S\subset \cY$ with smooth boundary we denote by
$$\Vert \varphi (S)\Vert:= \int_{S}\varphi^\star \omega \leqno(25)$$
We define the (normalized) integration current 
$$T_S:= \frac{[\varphi_\star (S)]}{\Vert \varphi (S)\Vert}$$
which has bidimension (1,1) and total mass equal to 1. In general, the  current $T_S$ is positive but not closed. However, it may happen that for some accumulation point
$$T_\infty= \lim_kT_{S_k}$$
is a closed current, when $S_k\to \cY$.

\noindent 

\begin{theorem} \label{t12}
\cite{Burns} Let $(\cY, \sigma)$ be a parabolic Riemann surface; we denote by $S_j:= \bB(r_j)$ the parabolic balls of radius $r_j$, where 
$(r_j)_{j\geq 1}$ is a sequence of real numbers such that $r_j\to \infty$.
We consider a holomorphic map $\varphi: \cY\to X$ of infinite area. Then there
exists at least one accumulation point of the sequence of currents 
$$T_j:= \frac{[\varphi_\star (S_j)]}{\Vert \varphi (S_j)\Vert}$$ 
which is a closed (positive) current, denoted by $T_{\infty}$. 
\end{theorem}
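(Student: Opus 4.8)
The plan is to show that the mass of $\partial T_j$ (equivalently, the defect from closedness, measured against a test $1$-form) tends to zero along a suitable subsequence, so that any weak-$*$ limit of the $T_j$ (which exists by Banach--Alaoglu, the $T_j$ having total mass $1$) is closed. Concretely, for a smooth $1$-form $\alpha$ on $X$ with $\|\alpha\|_{C^0}\le 1$, Stokes' theorem gives
$$
\langle dT_j,\alpha\rangle = \frac{1}{\|\varphi(S_j)\|}\int_{S_j} d(\varphi^\star\alpha) = \frac{1}{\|\varphi(S_j)\|}\int_{S(r_j)}\varphi^\star\alpha,
$$
so the key quantity to control is the boundary integral $\int_{S(r_j)}|\varphi^\star\alpha|$ relative to the growth of $\|\varphi(\bB(r_j))\|= \int_{\bB(r_j)}\varphi^\star\omega$.

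First I would bound the boundary term by the length of $\varphi(S(r))$ in the metric $\omega$: up to a constant depending only on $\|\alpha\|_{C^0}$, one has $\bigl|\int_{S(r)}\varphi^\star\alpha\bigr|\le C\,\mathrm{length}_\omega(\varphi(S(r)))$. Next, using the co-area formula (13) with $\psi= |df(\cdot)|_\omega\, d\mu_r$-type expressions, or more directly by writing the length integral in terms of $d^c\sigma$ and applying Cauchy--Schwarz on $S(r)$ against the probability measure $d\mu_r$, I would derive a differential inequality relating $\mathrm{length}_\omega(\varphi(S(r)))^2$ to $\frac{d}{dr}\bigl(r\frac{d}{dr}A(r)\bigr)$ where $A(r):=\int_1^r\frac{dt}{t}\|\varphi(\bB(t))\|$ is the characteristic-type function; this is exactly the mechanism behind the classical Ahlfors/Nevanlinna argument that produces $\int_{S(r)}\varphi^\star\alpha = o(\|\varphi(\bB(r))\|)$ along a sequence $r_j\to\infty$. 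The standard calculus lemma (the same one used in the proof of Theorem \ref{t5}, that $H'(s)\le H(s)^{1+\delta}$ off a set of finite measure for $H$ increasing) then upgrades this to: there is a set of finite Lebesgue measure outside which
$$
\mathrm{length}_\omega(\varphi(S(r)))\le C\,\|\varphi(\bB(r))\|\cdot\bigl(\log\|\varphi(\bB(r))\|+\log r\bigr)^{1/2}\cdot\|\varphi(\bB(r))\|^{\delta}.
$$
The point is that $\|\varphi(\bB(r))\|\to\infty$ (infinite area), so after choosing the radii $r_j$ outside the bad set and passing to a further subsequence realizing the lim inf of $\mathrm{length}_\omega(\varphi(S(r)))/\|\varphi(\bB(r))\|$, this ratio goes to $0$; hence $\langle dT_j,\alpha\rangle\to 0$ for every fixed smooth $\alpha$, and the limit current $T_\infty$ is closed. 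Positivity and unit mass pass to the limit automatically since each $T_j$ is a positive current of mass $1$ and the mass is continuous here because there is no escape of mass (the $T_j$ live on the fixed compact $X$).

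The main obstacle I expect is the passage from "the ratio $\mathrm{length}/\mathrm{area}$ is small for \emph{most} $r$" to "it is small along our \emph{prescribed} sequence $r_j$": a priori the given $r_j\to\infty$ could lie in the exceptional set of finite measure. The honest statement (and presumably what the theorem intends) is that \emph{some} accumulation point of the sequence is closed, which one gets by first extracting a subsequence of $r_j$ avoiding the bad set and along which the length/area ratio $\to 0$; this requires only that the bad set has finite measure while $\{r_j\}$ is infinite and unbounded, which still allows bad $r_j$'s but one can always interlace — the cleanest fix is to note that for the conclusion we may replace $r_j$ by nearby regular values $r_j'\in[r_j,r_j+1]$ outside the bad set, and the currents $T_{\bB(r_j')}$ are still a valid competing family whose limits are limits of the original family up to controlled error since $\|\varphi(\bB(r_j'))\|/\|\varphi(\bB(r_j))\|\to 1$ off a further finite-measure set. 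Making this interlacing/replacement rigorous, and checking that the resulting closed limit is genuinely an accumulation point of the original sequence $(T_j)$ in the weak topology, is the delicate bookkeeping step; everything else is the standard first-main-theorem estimate transcribed to the parabolic setting via Jensen's formula (Proposition \ref{p3}) and the co-area formula (13).
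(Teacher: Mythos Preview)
Your approach is essentially the paper's: bound the closedness defect via Stokes, control the boundary length by Cauchy--Schwarz against the probability measure on the parabolic spheres, then invoke the calculus lemma to compare length with area. Two remarks on execution.

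First, the paper's argument is cleaner because it works directly with the \emph{area}, not the characteristic function, and in the variable $u=\log\sigma$. Writing $\varphi^\star\omega=H\,du\wedge d^cu$, setting $A(u):=\int_{(u'<u)}\varphi^\star\omega$ and $L(u):=\int_{(u'=u)}\sqrt{H}\,d^cu$ for the $\omega$-length of the image of the level set, Cauchy--Schwarz against $d^cu$ (total mass $1$) gives $L(u)^2\le \int_{(u'=u)}H\,d^cu = A'(u)$ in one stroke, and a \emph{single} application of the calculus lemma yields $L(u)\le A(u)^{1/2+\varepsilon}$ off a set of finite measure. Your detour through the characteristic function $A(r)=\int_1^r\frac{dt}{t}\Vert\varphi(\bB(t))\Vert$ forces an extra integration and a second use of the calculus lemma. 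More seriously, your displayed inequality is wrong as written: it bounds the length by something of order $\Vert\varphi(\bB(r))\Vert^{1+\delta}$, which would make the ratio blow up, not tend to zero. The correct bound is $L\le A^{1/2+\varepsilon}$, whence $L/A\le A^{-1/2+\varepsilon}\to 0$ since the area is infinite.

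Second, your worry about the \emph{prescribed} sequence $(r_j)$ is legitimate, and the paper does no better on this point: its proof only establishes that the length/area bound holds for radii outside a set of finite measure, hence produces \emph{some} sequence along which the limit is closed. That is all the statement is meant to assert. Your interlacing fix (replacing each $r_j$ by a nearby good $r_j'$ and arguing the limits agree) goes beyond what the paper supplies and beyond what is needed for the intended conclusion.
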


\begin{proof} The arguments presented here are a quantitative version of the ones 
in \cite{Burns}. 

We denote by $u:= \log \sigma$ the log of the exhaustion function; by hypothesis, the measure $dd^cu$ has compact support. We define a function $H$ on $\cY$ by the equality
$$\varphi^\star \omega:= H du\wedge d^cu.$$
Let $A(t):= \int_{(u<t)}\varphi^\star\omega$ be the area of the image of parabolic ball of radius $e^t$ with respect to the
metric $\omega$, and let 
$$L(t):= \int_{(u=t)}\sqrt {H}d^cu;$$
geometrically, it represents the length of the parabolic sphere of radius $e^t$ measured with respect to the metric induced by $Hdu\wedge d^cu$ (or the length of the image with respect to $\omega$).

By Cauchy-Schwarz inequality we have 
$$L(t)^2\leq \int_{(u=t)}{H}d^cu \int_{\bB(t)}dd^c u= \int_{(u=t)}{H}d^cu \leqno(26)$$
because $\int_{\bB(t)}dd^c u= 1$. On the other hand we have 
$$\frac{d}{dt}A(t)= \int_{(u=t)} {H}d^cu\leqno(27)$$
thus combining the inequalities (26) and (27) we obtain
$$L(t)^2\leq \frac{d}{dt}A(t).$$
For every positive $\varepsilon$ we have $\displaystyle \frac{d}{dt}A(t)\leq A^{1+ 2\varepsilon}(t)$ for any $t$ belonging to the complement of a set $\Lambda_\varepsilon$
of finite measure; as a result we infer that the inequality
$$L(t)\leq A(t)^{1/2+ \varepsilon}\leqno (28)$$
holds true for any $t\in \bR_+\setminus \Lambda_\varepsilon$. In particular, this implies the existence of a current 
as an accumulation point of $T_j$, and the Theorem \ref{t12} is proved.\end{proof}
\smallskip

\noindent We consider next the case where $dd^cu$ is not 
necessarily with compact support.
We see that the previous statement admits the following version.
Let
$$\rho(t):= \int_{\bB(t)}dd^c u$$
be the mass of the measure $dd^cu$ on the ball of radius $t$. The 
inequalities (26) and (27) above show that we have
$$L(t)^2\leq \rho(t)\frac{d}{dt}A(t).$$
As already seen, we have $\displaystyle \frac{d}{dt}A(t)\leq A^{1+ 2\varepsilon}$ for any $t$ in the complement
of a set $\Lambda_\varepsilon$ of finite measure. Thus we have
$$L(t)\leq \left(\rho(t)\right)^{1/2}A(t)^{1/2+ \varepsilon},$$
and we see that we get a closed current as soon as there exists a constant $c> 0$ such that we have
$$\rho(t)\leq cA(t)^{1-\varepsilon'}$$
for some positive $\varepsilon'$.

\medskip

\noindent By using similar arguments, combined with results by B. Kleiner \cite{Kleiner} and
B. Saleur \cite{Saleur} we obtain a result in the direction of the conjecture in paragraph 1. Let $E\subset \bD$ be a polar subset of the unit disk, and let 
$f: \bD\setminus E\to M$ be a holomorphic map with values in a compact, Kobayashi hyperbolic manifold $M$. 
As it is well-known (cf \cite{Tsuji}) the set $\bD\setminus E$ carries a local exhaustion function $\sigma$ such that 
$u:= \log \sigma$ is harmonic. Let $\chi(t)$ be the Euler characteristic of the domain $(\sigma< t)$. We define
$A(t):= \int_{\sigma< t}f^\star\omega$. 

\begin{corollary}\label{c2}
If we have $\displaystyle \frac{|\chi(t)|}{A(t)}\to 0$ as $t\to \infty$, then the map $f$ admits an extension through $E$.
\end{corollary}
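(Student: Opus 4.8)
The strategy is to reduce Corollary~\ref{c2} to the extension statement of Proposition~\ref{p1}, using the closed positive current produced by (the local version of) Theorem~\ref{t12} together with a Bishop--Skoda type argument. First I would recall that, since $E\subset\bD$ is polar, the domain $\cY:=\bD\setminus E$ carries a local exhaustion $\sigma$ with $u:=\log\sigma$ harmonic outside a compact set, so that the Nevanlinna machinery of \S3 applies verbatim on parabolic balls $\bB(t)=(\sigma<t)$. The key point is that the hypothesis $|\chi(t)|/A(t)\to 0$ is exactly the condition $\mathfrak{X}_\sigma(r)=o(T_{f,\omega}(r))$ appearing in Theorem~\ref{t7} and in the co-area estimates of Theorem~\ref{t12}; combined with $M$ hyperbolic it will force $f$ to have, in fact, \emph{finite} area near $E$, which is what is needed to invoke Bishop--Skoda.

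\textbf{Key steps.} (i) Suppose, for contradiction, that $f$ does not extend across some point of $E$; as in the proof of Proposition~\ref{p1}, we may assume $E=\{0\}$ and work on $\bD^\star$. (ii) Since $M$ is hyperbolic, its Kobayashi--Royden metric $g$ is nondegenerate, and the distance-decreasing property applied to $f:\bD^\star\to M$ gives the pointwise bound $|f'(t)|^2_g\le \dfrac{1}{|t|^2\log^2|t|^2}$, exactly as in inequality (3). (iii) Integrating this bound against $d\sigma\wedge d^c\sigma$ on $\bB(t)$ and using the co-area formula (13)--(14) together with the normalization $\int_{\bB(t)}dd^cu=1$ shows that $A(t)=\int_{\bB(t)}f^\star\omega$ grows at most like $\cO(\log t)$ — more precisely it is controlled by the integral of the right-hand side of (3), which converges as one approaches $0$. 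But the local exhaustion of $\bD\setminus\{0\}$ has bounded Euler characteristic, so $|\chi(t)|=\cO(1)$; plugging into the hypothesis gives $\cO(1)/A(t)\to 0$, i.e. $A(t)\to\infty$, contradicting the logarithmic bound just obtained. (iv) Hence $A(t)$ is bounded, so the graph $\Gamma_f^0\subset\bD^\star\times M$ has finite area; by Bishop--Skoda (as cited in the proof of Proposition~\ref{p1}) it extends to an analytic subset $\Gamma\subset\bD\times M$, and the same area argument as there (integrating (3) over a shrinking disk) shows the fiber over $0$ is a point, so $f$ extends holomorphically. This contradicts the assumption in (i), and since $E$ is polar hence totally disconnected with countable set of "bad" isolated points, the reduction in (i) shows the extension holds across all of $E$.

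\textbf{Where the subtlety lies.} The routine part is the hyperbolicity-plus-distance-decreasing estimate and the Bishop--Skoda extension, which are imported essentially unchanged from Proposition~\ref{p1}. The real content — and the reason Kleiner's and Saleur's results on the geometry of $\bD\setminus E$ are invoked — is step (iii): one must know that the \emph{intrinsic} exhaustion $\sigma$ of $\bD\setminus E$ can be chosen so that $|\chi(t)|$ genuinely measures the topology of the sublevel sets and so that the co-area identity $\int_{\bB(t)}d\sigma\wedge d^c\sigma$ is comparable to $t^2$ (cf.\ (24)), allowing the area growth of $f$ to be read off from a purely potential-theoretic quantity. In other words, the main obstacle is to set up the dictionary between the abstract parabolic exhaustion and the concrete euclidean geometry of $\bD\setminus E$ tightly enough that the hypothesis $|\chi(t)|/A(t)\to 0$ can be fed into the area estimate; once that is in place, the contradiction and the extension follow as above.
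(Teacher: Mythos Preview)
Your proposal has a genuine gap, and it lies precisely where you yourself flag the ``subtlety'': you have misidentified what the Kleiner--Saleur input actually is, and as a result your contradiction argument does not close.

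First, the reduction in step (i) is illegitimate here. A polar set $E\subset\bD$ need not be countable, so the isolated--point trick from Proposition~\ref{p1} does not apply; more importantly, the hypothesis $|\chi(t)|/A(t)\to 0$ is formulated for the fixed exhaustion $\sigma$ of $\bD\setminus E$, and it simply does not survive passage to the exhaustion of $\bD\setminus\{0\}$. After your reduction you are using a hypothesis you no longer possess. Second, once you are on $\bD^\star$, the Kobayashi distance--decreasing bound (3) gives finite area unconditionally; your ``contradiction'' in (iii) is then between the (global, original) hypothesis and an estimate valid for any map $\bD^\star\to M$, so it contradicts nothing that was assumed in (i). In effect your argument, if it worked, would prove the full Conjecture~2.1 without the hypothesis, which is not what you want to be claiming.

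What the paper actually does is quite different and uses the hypothesis in an essential way. The Kleiner--Saleur result (Theorem~\ref{t13}) is not a statement about the potential theory of $\bD\setminus E$; it is a \emph{linear isoperimetric inequality} for holomorphic maps into a compact hyperbolic target:
\[
{\rm Area}\big(f(\Sigma)\big)\le C_1\,|\chi(\Sigma)|+C_2\,{\rm L}\big(f(\partial\Sigma)\big).
\]
One applies this with $\Sigma=\bB(t)$ and combines it with the length estimate $L(t)\le A(t)^{1/2+\varepsilon}$ (inequality (28), derived in the proof of Theorem~\ref{t12} from Cauchy--Schwarz and the calculus lemma). This yields
\[
A(t)\le C_1|\chi(t)|+C_2A(t)^{1/2+\varepsilon},
\]
and now the hypothesis $|\chi(t)|/A(t)\to 0$ forces $A(t)$ to stay bounded. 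With finite area in hand, one concludes by Skoda--El~Mir applied to the graph current in $\bD\times M$ (no reduction to a single puncture is needed; the extension is across the polar set $E\times M$ directly). Your sketch never produces an inequality linking $A(t)$, $|\chi(t)|$ and a boundary term, so the hypothesis has nothing to bite on.
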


\begin{proof}
The argument relies heavily on the following result, which uses a technique due to B. Kleiner \cite{Kleiner}.

\begin{theorem} \label{t13}
\cite{Saleur} Let $(M, \omega)$ be a compact Kobayashi hyperbolic manifold. There exist two constants $C_1, C_2$ such that for every holomorphic map $f: \Sigma\to M$ defined on a Riemann surface with smooth boundary $\partial \Sigma$ we have
$${\rm Area}\big(f(\Sigma)\big)\leq C_1|\chi(\Sigma)|+ C_2{\rm L}\big(f(\partial \Sigma)\big).\leqno(29)$$
\end{theorem}
Coming back to the domains $(\sigma< t)$, inequality (28) shows that we have
$$L(t)\leq A(t)^{1/2+ \varepsilon}.$$
When combined with the inequality (29) of the preceding theorem, we get
$$A(t)\leq C_1|\chi(t)|+ C_2 A(t)^{1/2+ \varepsilon}.$$
Given the hypothesis concerning the growth of the Euler characteristic, we infer the existence of a constant $C_3$ such that
$$A(t)\leq C_3$$
and for the rest of the proof we will follow the argument given in Proposition 1.1. Indeed, the current associated to the 
graph $\Gamma_f\subset \bD\setminus E
\times M$ of the map $f$ has finite mass near the polar set $E\times M\subset \bD\times M$.
By using Skoda-ElMir extension theorem (for a simple proof, see \cite{Sibony}), the current $[\Gamma]$ extends to $\bD\times M$ with no mass 
on $E\times M$. But this implies that 
the graph $\Gamma$ extends as an analytic subset of $\bD\times M$; in other words, $f$ extends as a meromorphic map.
It follows that in fact $f$ is holomorphic, since $f$ is defined on a 1-dimensional disk.
\end{proof}
\smallskip

\begin{rem} {\rm
We can also consider a version of the current $T_\infty$ in the above statement. For each $r> 0$, the expression
$$T_r:= \frac{1}{T_f(r)}\int_0^r\frac{dt}{t}[\varphi_\star(\bB_t)]$$
defines a positive current on $X$. One can show that there exists a sequence
$r_k$ such that the limit points of $\displaystyle (T_{r_k})$ are positive and closed.
Any such limit will be called a Nevanlinna current associated to $\varphi$, and will be denoted by
$T[f]$. If we consider the lift of $f$ to $\bP(T_X)$, we get (with the same construction) a current
denoted $T[f^\prime]$. Let $\pi: \bP(T_X)\to X$ be the projection; then we can assume 
that $\pi_\star(T[f^\prime])= T[f]$, as we will see later.
}
\end{rem}

\noindent The preceding considerations apply e.g. to maps $f: \bD\to X$ defined on the unit disk $\bD\subset \bC$; in this case we have
$\displaystyle u:= \log\frac{1}{1- |z|}$, so $f$ will define a closed positive current provided that its area grows fast enough; this is the content of the next statement.

\begin{corollary}
Let $f: \bD\to X$ be a holomorphic map. We denote by $T(r)$ the Nevanlinna characteristic of $f$, and we assume that we have 
$$\frac{T(r)}{\log\frac{1}{1- r}}\to \infty$$
as $r\to 1$. Then any limit of $T_r(f)$ is a closed positive current. 
\end{corollary}

\medskip

\subsection{Metrics on the tangent bundle of a holomorphic foliation by disks}

Let $\cF$ be a 1-dimensional holomorphic foliation (possibly with singularities) on a manifold $X$. 
This means that we are given a finite open 
covering $(U_\alpha)_{\alpha}$ of $X$ with coordinates charts, and a family of associated vector fields 
$\displaystyle v_\alpha\in H^0(U_\alpha, T_X|_{U_\alpha})$
such that there exists $g_{\alpha \beta}\in \cO^\star (U_\alpha\cap U_\beta)$ 
with the property that 
$$v_\alpha= g_{\alpha \beta}d\pi_{\alpha \beta}(v_\beta)$$
on the intersection of $U_\alpha$ and $U_\beta$. Here we denote by $(\pi_{\alpha \beta})$ the transition functions of $X$, 
corresponding to the covering $(U_\alpha)$.
The (analytic) set of zeros of $(v_\alpha)$ is supposed to have codimension at least two, and it is
denoted by $\cF_{\rm sing}$. The functions $\displaystyle (g_{\alpha \beta})$ verify the cocycle property, and they define the cotangent bundle corresponding to the foliation $\cF$, denoted by 
$T_{\cF}^\star$.

From the global point of view, the family of vector fields $\displaystyle (v_\alpha)_\alpha$
corresponds to a section $V$ of the vector bundle $T_X\otimes T_{\cF}^\star$.
\smallskip

\noindent Let $\omega$ be a metric on $X$ (which is allowed to be singular). We will show that 
$\omega$ induces a metric 
$\displaystyle h_s$ on the tangent bundle $T_{\cF}$ (as we will see, the induced metric may be singular even if the reference metric $\omega$ is smooth).

Let $x\in X$, and let $\xi\in T_{\cF, x}$ be an element of the fiber at $x$ of the tangent bundle corresponding to $\cF$. Then we define its norm as follows
$$\vert \xi\vert_{h_s}^2:= \vert V_{x}(\xi)\vert_{\omega}^2.\leqno (30)$$
The local weights of the metric $\displaystyle h_s$ on the set $U_\alpha$ are described as follows.
Let $z^1,\dots z^n$ be local coordinates on $X$ centered at $x$. We write
$$v_\alpha = \sum_{i=1}^n a^i_\alpha\frac{\partial}{\partial z^i}$$
where $a^i_\alpha$ are holomorphic functions defined on $U_\alpha$; we assume that their 
common zero set has codimension at least 2 in $X$. 

The local weight $\phi_\alpha$ of the metric $\displaystyle h_s$ is given by the expression
$$\phi_\alpha= -\log \sum_{i, j}a^i_\alpha\overline{a^j_\alpha}\omega_{i\overline j}$$
where $\omega_{i\overline j}$ are the coefficients of the metric $\omega$ with respect to the local coordinates $(z^j)_{j=1,\dots n}$. Indeed, let $\theta$ be a local trivialization of the bundle $T_{\cF}$.
Then according to the formula (30) we have
$$\vert \xi\vert_{h_s}^2= 
\big(\sum_{i, j}a^i_\alpha\overline{a^j_\alpha}\omega_{i\overline j}\big)|\theta(\xi)|^2,\leqno (31)$$
which clarifies the formula for the local weight of $h_s$.

\medskip

In some cases, the previous construction can be further refined, as follows.
\smallskip

\noindent Let $B= \sum_{j=1}^NW_j$ be a divisor on $X$. We assume that the following requirements are fulfilled.

\begin{enumerate}

\item[(a)] At each point of 
$x\in \Supp (B)$ the local equations of the analytic sets 
$$\displaystyle (W_j, x)_{j=1,\dots,k}$$ 
can be completed to a local coordinate system centered at $x$. Here we denote by $k$ the number of hypersurfaces in the
support of $B$ containing the point $x$ (and we make a slight abuse of notation). In the language of algebraic geometry, one calls such a pair $(X, B)$ \emph {log-smooth}.
\smallskip

\item[(b)] We assume that each component $W_j$ of $\Supp (B)$ is invariant by the foliation $\cF$.

\end{enumerate}

\noindent If the condition (a) above is verified, then we recall that the 
\emph{logarithmic tangent bundle} of ($X, B$) is the subsheaf of $\cO(T_X)$ defined locally as follows.

Let $U\subset X$ be a coordinate open set. We assume that we have a coordinate system
$z_1,\dots, z_n$ on $U$, such that 
$$\Supp(B)\cap U= \big(z_1z_2\dots z_k= 0\big).$$
Then the logarithmic tangent bundle $T_X\langle B\rangle $ corresponding to the pair $(X, B)$ is the subsheaf of $T_X$ 
whose local sections on $U$ are given by
$$v= \sum_{j=1}^kv_jz_j\frac{\partial}{\partial z_j}+ \sum_{p=k+1}^nv_p\frac{\partial}{\partial z_p}.$$
In other words, the local sections of $T_X\langle B\rangle|_U $ are the vector fields of $T_X|_U$
which are tangent to $B$ when restricted to $B$.
We note that the $T_X\langle B\rangle$ is a vector bundle of rank $n$, and the local model of a hermitian metric on it is given by
$$\omega_{U}\equiv \sqrt{-1}\sum_{j=1}^k\frac{dz_j\wedge d\ol z_j}{|z_j|^2}+
 \sum_{j=k+1}^n dz_j\wedge d\ol z_j$$
 i.e. a metric with logarithmic poles along $B$. So, we have 
 $$|v|_{\omega_U}^2= \sum_j|v_j|^2.$$
From a global point of view, a hermitian metric $\omega_{X, B}$ on $T_X\langle B\rangle$ can be written as
\begin{equation} 
\begin{split}
\omega_{X, B}|_U= & \sqrt{-1}\sum_{j, i=1}^k\omega_{j\ol i}\frac{dz_j\wedge d\ol z_i}{z_j\ol z_i}+
2{\rm Re}\sqrt{-1}\sum_{j>k\geq  i}\omega_{j\ol i}\frac{dz_j\wedge d\ol z_i}{\ol z_i}+\\
+ & \sqrt{-1}\sum_{j, i\geq k+1}\omega_{j\ol i}{dz_j\wedge d\ol z_i}\\
\nonumber
\end{split}
\end{equation}

where the Hermitian matrix
$(\omega_{j\ol i})$ is positive definite.
\smallskip

\noindent If moreover the condition (b) is fulfilled, then the family of vector fields $v_\alpha$ defining the foliation $\cF$ can be seen as a global section $V_B$ of the bundle 
$$T_{X}\langle B\rangle\otimes T_{\cF}^\star$$
and we have the following version of the metric constructed above. For each vector 
$\xi\in T_{\cF, x}$ we define
$$\Vert \xi\Vert^2_{h_{s, B}}:= |V_{B, x}(\xi)|^2_{\omega_{X, B}}.$$
As in the case discussed before, we can give the local expression of the 
metric on $T_{\cF}$, as follows. Let
$$v_\alpha = \sum_{i=1}^k a^i_\alpha z_i\frac{\partial}{\partial z^i}+ 
\sum_{i=k+1}^n a^i_\alpha \frac{\partial}{\partial z^i}
$$
be a logarithmic vector field trivializing the tangent bundle of the foliation on a coordinate set
$U_\alpha$. Then 
the local weight $\phi_{\alpha, B}$ of the metric $\displaystyle h_{s, B}$ induced by the 
metric $\omega_{X, B}$ is given by the expression
$$\phi_{\alpha, B}= -\log \sum_{i, j}a^i_\alpha\overline{a^j_\alpha}\omega_{i\overline j}.
\leqno(32)$$
In particular we see that this weight is \emph{less singular than} the one in the expression (31). This will be crucial in the applications.
\smallskip

As far as the curvature current is concerned, the metric $\displaystyle h_s$ as well as its logarithmic variant $\displaystyle h_{s, B}$ seem useless: given the definition above, its associated curvature is neither positive nor negative.
Indeed, $\phi_\alpha$ may tend to infinity along the singular set of the foliation $\cF$, and 
it may tend to minus infinity along the singularities of the metric $\omega$. However, we will present a few
applications of this construction in the next paragraphs.

\subsection{Degree of currents associated to parabolic Riemann surfaces on the tangent bundle of foliations}

Let $(X, \omega)$ be a compact complex hermitian manifold, and let $\cF$ be a holomorphic foliation on
$X$ of dimension 1. 
Let $f:\cY\to X$ be a holomorphic map, where $(\cY, \sigma)$ is 
a parabolic Riemann surface tangent to $\cF$,
and let 
$$T[f]:= \lim_rT_r[f]$$
be a Nevanlinna current associated to it. 

\noindent In this section we will derive a lower bound in arbitrary dimension for the quantity 
$$\int_XT[f]\wedge c_1(T_{\cF}),$$
in the same spirit as \cite{McQ}, \cite{Brun1}. Prior to this, we introduce a few useful notations.

Let $\cJ_{\cF_{\rm s}}$ be the coherent ideal associated to the singularities of $\cF$; this means that 
locally on $U_\alpha$ the generators of $\cJ_{\cF_{\rm s}}$ are precisely the coefficients 
$(a_\alpha)$ of the vector $v_\alpha$ defining $\cF$, i.e. 
$$v_\alpha= \sum_{i=1}^n a_\alpha^i\frac{\partial}{\partial z^i}.$$ 

As we have already mentioned in paragraph 3,
there exists a function $\psi_{\rm sing}$ defined on $X$ and 
having the property that 
locally on each open set $U_\alpha$ we have 
$$\psi_{\rm sing}\equiv \log |v_\alpha|_\omega^2$$ modulo a bounded function. 

Let $B= \sum_j W_j$ be a divisor on $X$, such that the pair $(X, B)$ satisfies the requirements (a) and
(b) in the preceding paragraph. Then the local generator of $T_\cF$ can be written in this case as 
$$v_{\alpha, B}= \sum_{i=1}^k a_\alpha^iz_i\frac{\partial}{\partial z^i}+ \sum_{i=k+1}^n a_\alpha^i\frac{\partial}{\partial z^i}.\leqno(33)$$
We denote by $\cJ_{\cF_{\rm s, B}}$ the coherent ideal defined by the functions $(a_\alpha^i)$
in (33). Then we have 
$$\cJ_{\cF_{\rm s}}\subset \cJ_{\cF_{\rm s, B}},$$ 
and the inclusion may be strict. We denote by $\psi_{\rm sing, B}$ the associated function.

The counting function with respect to the ideal defined by 
$\cF_{\rm sing}$ will be denoted
$$\displaystyle N_{f, \cJ_{\cF_{\rm s}}}(r)= \sum_{0<\sigma(t_j)< r}\nu_j\log \frac{r}{\sigma(t_j)}=
\int_0^r\frac{dt}{t}\int_{B(t)}\big(dd^c\psi_{\rm sing}\circ f\big)_{s},$$
with $f(t_{j})\in \Supp(\cF_{\rm sing})$, and the subscript $s$ above denotes the singular part of the 
considered measure. 
Its normalized expression will be written as
$$\nu^T(f, \cF_{\rm sing})(r):= \frac {1}{T_f(r)}N_{f, \cJ_{\cF_{\rm s}}}(r).\leqno (34)$$
The upper limit of the expression above will be denoted by
$$\nu^T(f, \cF_{\rm sing}):= \overline \lim_r\nu^T(f, \cF_{\rm sing})(r).$$
If $\Xi$ is an arbitrary analytic subset of $X$, we will denote by 
$\displaystyle \nu^T(f, \Xi)$ the quantity defined in a similar manner 
by using the function $\psi_\Xi$ instead of 
$\psi_{\rm sing}$.

We define the counting function with respect to $\cF_{\rm s, B}$ as 
$$\displaystyle N_{f, \cJ_{\cF_{\rm s, B}}}(r)= \sum_{0<\sigma(t_j)< r}\nu_j\log \frac{r}{\sigma(t_j)},$$
with $f(t_{j})\in \Supp(\cF_{\rm s, B})$,
together with it normalized expression
$$\nu^T(f, \cF_{\rm sing, B})(r):= \frac {1}{T_f(r)}N_{f, \cJ_{\cF_{\rm s, B}}}(r).\leqno (35)$$
The following truncated counting function will appear in our next computations:
$$\displaystyle N^{(1)}_{f, \cJ_{\cF_{\rm s}}\cap B}(r)= \sum_{0<\sigma(t_j)< r, f(t_j)\in B}\log \frac{r}{\sigma(t_j)}.\leqno (36)$$
and let $\displaystyle \nu^T_1(f, \cF_{\rm sing}\cap B)$ be its normalized upper limit.

We also recall the definition of 
$$m^T(f, \cF_{\rm sing}):= \overline \lim_r\frac{1}{T_f(r)}\int_{S(r)}-\psi_{\rm sing}\circ f d\mu_r$$
which is the (normalized) asymptotic 
proximity function for $f$ with respect to the ideal $\cJ_{\cF_{\rm s}}$, together with its logarithmic variant
$$m^T(f, \cF_{\rm sing, B}):= \overline \lim_r\frac{1}{T_f(r)}\int_{S(r)}-\psi_{\rm sing, B}\circ f d\mu_r.$$

The ramification function corresponding to $f$ is
$$R_{f}(r)= \sum_{0<\sigma(t_j^\prime)< r} 
{\mu_j}\log \frac{r}{\sigma(t_j^\prime)};\leqno(37)$$
so that $\mu_j$ is the vanishing order of $f^\prime$ at $t_j^\prime$. 
The curve $f$ is tangent to $\cF$, therefore for each open set $\Omega\subset \cY$ such that 
$f(\Omega)\subset U_\alpha$ for some index $\alpha$
we can write
$$f^\prime(t)= \lambda(t)v_{\alpha, f(t)}$$
for some function $\lambda$ which is holomorphic on $\Omega\setminus f^{-1}(\cF_{\rm sing})$.
We remark that 
if $f(t_j)\not\in \cF_{\rm sing}$, then the multiplicities $\mu_j$ above
coincide with the vanishing order of $\lambda$ evaluated at the critical points of $f$. 
It will be useful in what follows to have the decomposition 
$$R_{f}(r):= M_f(r)+ N_f({\rm Ram}, r)$$
according to the possibility that the critical value $f(t_j^\prime)$ of $f$ belongs to the set
$\cF_{\rm sing}$ or not.
We are using the notations
$$N_f({\rm Ram}, r):= \sum_{0<\sigma(t_j^\prime)< r, f(t_j^\prime)\not\in \cF_{\rm sing}} 
{\mu_j}\log \frac{r}{\sigma(t_j^\prime)},$$
and
$$M_f(r):= \sum_{0<\sigma(t_j^\prime)< r, f(t_j^\prime)\in \cF_{\rm sing}} 
{\mu_j}\log \frac{r}{\sigma(t_j^\prime)}.$$
Finally, the asymptotic normalized ramification of $f$ is denoted by
$${\overline \nu}({\rm Ram}, f):= \overline \lim_r \frac {1}{T_f(r)}R_{f}(r).$$

\medskip

\noindent We establish next the following general result, which gives an estimate
of the quantity $\int_XT[f]\wedge c_1(T_{\cF})$ in terms of the intersection of $f$ with the
singularities of the foliation. Our statement is a quantitative expression of the
fact that the derivative of $f$ can be seen as a meromorphic section of $f^\star T_{\cF}$.

\begin{theorem}
\label{monq}
Let $(X, \cF)$ be a compact complex manifold endowed with a holomorphic $1$-dimensional 
foliation $\cF$. Let $(\cY, \sigma)$ be a parabolic Riemann surface 
and let $f:\cY\to X$ be a holomorphic map whose image is tangent to $\cF$
such that $\mathfrak{X}_\sigma (r)= o(T_f(r))$. We assume that the image of $f$ is Zariski dense. Then we have
$$\int_XT[f]\wedge c_1(T_{\cF})\geq -\nu^T(f, \cF_{\rm sing})- m^T(f, \cF_{\rm sing})+
\overline \nu({\rm Ram}, f)
\leqno (\star)$$ 

\end{theorem}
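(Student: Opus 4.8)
The plan is to reduce the statement to an integrated form of the
tautological identity $f' = \lambda\, v_{\alpha,f}$ expressing the derivative
of $f$ as a meromorphic section of $f^\star T_\cF$, and then to pass to the
Nevanlinna current by the usual averaging procedure. Concretely, fix the
trivializing vector field $\xi\in H^0(\cY,T_\cY)$ and write $f'=df(\xi)$,
which on a chart $U_\alpha$ satisfies $f'(t)=\lambda_\alpha(t)\,v_{\alpha,f(t)}$
for a meromorphic function $\lambda_\alpha$. The key point is that
$\log|\lambda_\alpha|^2$ is, up to the globally defined bounded-error function,
$\log|f'|^2_\omega - \psi_{\rm sing}\circ f$, so that $\lambda:=(\lambda_\alpha)$
is a global meromorphic section of $f^\star T_\cF^{-1}$ whose divisor records
exactly: the ramification points of $f$ (zeros, with multiplicities $\mu_j$)
against the points where $f$ meets $\cF_{\rm sing}$ (poles, governed by
$\psi_{\rm sing}\circ f$). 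Endowing $T_\cF$ with the singular metric $h_s$ from
$\omega$ (local weight $\phi_\alpha = -\log\sum a^i_\alpha\overline{a^j_\alpha}\omega_{i\bar j}$,
i.e.\ $\phi_\alpha = -\psi_{\rm sing}$ mod bounded), the curvature of $h_s$ is
$\Theta_{h_s}(T_\cF)$ and we compute $dd^c\log|\lambda|^2_{h_s^{-1}}$ on $\cY$:
Poincar\'e--Lelong gives a combination of the ramification divisor of $f$, the
Dirac masses coming from $f^{-1}(\cF_{\rm sing})$, and $-f^\star\Theta_{h_s}(T_\cF)$.

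Next I would apply the Jensen formula (Proposition \ref{p3}) to the function
$v:=\log|\lambda|^2_{h_s^{-1}} = \log|f'|^2_\omega$ (which is locally a difference
of subharmonic functions since $f'$ is holomorphic and $\omega$ is smooth,
away from $\cF_{\rm sing}$ where the singular part is controlled by
$\psi_{\rm sing}\circ f$). Integrating $dd^c v$ over $\bB(t)$ and then
$\int_1^r \frac{dt}{t}(\cdots)$, the left side becomes, after rearranging,
$$\int_1^r\frac{dt}{t}\int_{\bB(t)} f^\star\Theta_{h_s}(T_\cF) = R_f(r) - N_{f,\cJ_{\cF_{\rm s}}}(r) - \int_{S(r)}\psi_{\rm sing}\circ f\, d\mu_r + \cO(1) + (\text{error}),$$
where the error term is precisely the weighted Euler characteristic
contribution: indeed $\log|f'|^2_\omega$ involves $\log|d\sigma(\xi)|$-type
terms whose integrated $dd^c$ is $\mathfrak{X}_\sigma(r) = \cO(\log r)$ by
Proposition \ref{p2} and the finite Euler characteristic hypothesis (2).
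The left-hand side is, by definition, the characteristic function of $f$ with
respect to the curvature form $\Theta_{h_s}(T_\cF)$, which after dividing by
$T_f(r)$ and letting $r\to\infty$ along the subsequence $r_k$ defining $T[f]$,
converges to $\int_X T[f]\wedge c_1(T_\cF)$ — here one uses that the class of
$\Theta_{h_s}$ represents $c_1(T_\cF)$ and that the Nevanlinna current is the
weak limit of the normalized averaged currents $T_{r_k}[f]$, so the curvature
integral passes to the limit. Dividing the displayed identity by $T_f(r)$,
the proximity term $-\int_{S(r)}\psi_{\rm sing}\circ f\, d\mu_r$ contributes
$\liminf \geq -m^T(f,\cF_{\rm sing})$ up to sign (note $\psi_{\rm sing}$ can be
normalized $\leq 0$, so this term has a definite sign), the counting term
contributes $-\nu^T(f,\cF_{\rm sing})$, the ramification term contributes
$+\overline\nu({\rm Ram},f)$, and the $\mathfrak{X}_\sigma(r)=\cO(\log r)$
error is absorbed since $T_f(r)/\log r\to\infty$ (as area is infinite). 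This
yields exactly $(\star)$.

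There are two delicate points. The first is the careful bookkeeping of signs
and of the bounded/$\cO(\log r)$ errors: one must check that the singular metric
$h_s$ — whose weight $\phi_\alpha$ blows up to $+\infty$ along $\cF_{\rm sing}$
and to $-\infty$ along the singularities of $\omega$ — still defines a current
$\Theta_{h_s}(T_\cF)$ whose cohomology class is $c_1(T_\cF)$, and that the
pullback $f^\star\Theta_{h_s}(T_\cF)$ is a well-defined measure on $\cY$ because
$f$ is Zariski dense hence not contained in $\cF_{\rm sing}$ nor in the singular
locus of $\omega$; the contributions of these loci along $f$ are exactly the
$N_{f,\cJ_{\cF_{\rm s}}}$ and $m^T$ terms, so nothing is lost. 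The second, and
I expect the main obstacle, is justifying that $\int_X T_{r_k}[f]\wedge
c_1(T_\cF)$ actually converges to $\int_X T[f]\wedge c_1(T_\cF)$: since $c_1(T_\cF)$
is only represented by a \emph{singular} closed $(1,1)$-current and not by a smooth
form, weak convergence of $T_{r_k}[f]$ to $T[f]$ does not immediately give
convergence of the pairing. The way around this is to do the whole computation
at the level of potentials on $\cY$ as above — i.e.\ never pair currents on $X$
directly, but instead express $\int_X T_{r}[f]\wedge c_1(T_\cF)$ as
$\frac{1}{T_f(r)}\int_1^r\frac{dt}{t}\int_{\bB(t)} f^\star\Theta_{h_s}(T_\cF)$
and estimate this last quantity by Jensen, so that the singular metric is handled
on the source $\cY$ where $f^\star\Theta_{h_s}$ is an honest (signed) measure with
the singular part explicitly identified. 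This is the mechanism of McQuillan's
argument in \cite{McQ}, adapted to the parabolic exhaustion, and the finite
Euler characteristic hypothesis is what keeps the $\mathfrak{X}_\sigma(r)$ error
negligible against $T_f(r)$.
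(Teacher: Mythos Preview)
Your proposal is correct and follows essentially the same route as the paper: both arguments hinge on viewing $f'$ as a meromorphic section of $f^\star T_\cF$ via $f'=\lambda_\alpha v_{\alpha,f}$, computing with the singular metric $h_s$ induced by $\omega$, applying Jensen on $\cY$, and bounding the boundary term $\int_{S(r)}\log|f'|^2_\omega\,d\mu_r$ by the logarithmic derivative lemma (this is where $\mathfrak X_\sigma(r)=\cO(\log r)$ enters). The paper resolves your ``second delicate point'' not by an abstract appeal to working on $\cY$, but by introducing the auxiliary metric $h:=h_s\exp(-\psi_{\rm sing})$, which has \emph{bounded} local weights; this makes $\langle T_r[f],\Theta_h(T_\cF)\rangle$ unambiguous and its limit equal to the cohomological pairing, and then one uses $\Theta_h=\Theta_{h_s}+dd^c\psi_{\rm sing}$ together with Jensen to produce exactly the proximity term $m^T(f,\cF_{\rm sing})$. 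The paper also carries out an explicit $\varepsilon$-neighborhood decomposition around $\cF_{\rm sing}$ to justify the Poincar\'e--Lelong bookkeeping at those points, which is the rigorous version of your direct computation of $\mathrm{div}(\lambda)$.
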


\begin{proof} Let $\omega$ be a smooth metric on $X$, and let $h_s$ be the metric induced 
on $T_\cF$ by the procedure described in the preceding sub-section.

Let $r> t> 0$; we begin by evaluating the quantity 
$$\int_X T_r[f]\wedge \Theta_{h_s}(T_\cF) $$
and to this end we introduce the notations
$$\bB_{\alpha, \varepsilon}(t):= \{z\in \bB(t) : f(z)\in U_\alpha,\hbox{ and } { d_\omega }\big(f(z), \cF_{\rm sing})\geq \varepsilon\}$$
as well as its complement set inside the parabolic ball of radius $t$
$$\bB_{\alpha, \varepsilon}^c(t):= \bB_\alpha(t)\setminus \bB_{\alpha, \varepsilon}(t)$$
where $\bB_\alpha(t):= \bB(t)\cap f^{-1}(U_\alpha)$. 
Let $(\rho_\alpha)$ be a partition of unit corresponding to the cover $(U_\alpha)$. 

In the definition of the metric $h_s$ we use the smooth K\"ahler metric $\omega$ we have fixed on $X$, and we have.
\begin{equation} 
\begin{split}
\int_{X}T_r[f]\wedge \Theta_{h_s}(T_{\cF})= & 
-\sum_\alpha 
\frac {1}{T_f(r)}\int_1^r\frac{dt}{t}\int_{B_{\alpha,\varepsilon}^c (t)}\rho_\alpha(f)
f^\star dd^c
\log \vert v_\alpha\vert ^2_{\omega} \\
- & \sum_\alpha \frac {1}{T_f(r)}\int_1^r\frac{dt}{t}\int_{B_{\alpha,\varepsilon} (t)}\rho_\alpha(f)
f^\star dd^c
\log \vert v_\alpha\vert ^2_{\omega}.
\nonumber
\end{split}
\end{equation}
We remark that for each $t< r$ and for each index $\alpha$ we have
\begin{equation} 
\begin{split}
\int_{B_{\alpha,\varepsilon} (t)}\rho_\alpha(f)
f^\star dd^c
\log \vert v_\alpha\vert ^2_{\omega}
= & \int_{B_{\alpha,\varepsilon} (t)}\rho_\alpha(f)
dd^c
\log \vert f^\prime \vert ^2_{\omega}\\
- & \sum_{0<\sigma(t_j^\prime)< t, f(t_j^\prime)\not\in\cF_{\rm sing}} 
\rho_\alpha\big(f(t_j^\prime)\big){\mu_j}\delta_{t_j^\prime}.
\nonumber
\end{split}
\end{equation}
The equality in the formula above is due to the fact that locally at each point 
in the complement of 
the set $\cF_{\rm sing}$ we have
$$f^\prime(t) = \lambda v_{f(t)}$$
for some holomorphic function $\lambda$. We also remark that the relation above is valid for 
any $\varepsilon> 0$, and if we let $\varepsilon\to 0$, we have 
\begin{equation} 
\begin{split}
\lim_{\varepsilon\to 0}\int_1^r\frac{dt}{t}\int_{\bB_{\alpha, \varepsilon} (t)}
\rho_\alpha(f)dd^c\log \vert f^\prime\vert ^2_{\omega}= & 
\int_1^r\frac{dt}{t}\int_{\bB_\alpha(t)}\rho_\alpha(f)dd^c\log \vert f^\prime\vert ^2_{\omega} \\
- & \sum_{f(t_j^\prime)\in \cF_{\rm sing}} \rho_\alpha\big(f(t_j^\prime)\big)\mu_j\log \frac{r}{\sigma(t_j^\prime)}
\nonumber
\end{split}
\end{equation} 
as well as
$$
\lim_{\varepsilon\to 0}\int_1^r\frac{dt}{t}\int_{\bB_{\alpha, \varepsilon}^c(t)}
\rho_\alpha(f)f^\star dd^c
\log \vert v_\alpha\vert ^2_{\omega} = \sum_{t_j\in \bB_\alpha(t), f(t_{j})\in \cF_{\rm sing}} \rho_\alpha\big(f(t_j)\big)\nu_j\log \frac{r}{\sigma(t_j)}.$$
Therefore we obtain
\begin{equation} 
\begin{split}
\langle T_r[f], \Theta_{h_s}(T_\cF) \rangle \geq & -\nu^T(f, \cF_{\rm sing})(r)+ \frac {1}{T_f(r)}M_f(r) \\
- & \frac {1}{T_f(r)}\int_1^r\frac{dt}{t}\int_{\bB(t)}
dd^c\log \vert f^\prime \vert ^2_{\omega}+ \frac {1}{T_f(r)}N_f({\rm Ram}, r)= \\
= & - \nu^T(f, \cF_{\rm sing})(r)+ \frac {1}{T_f(r)}R_f(r)-\\
& \frac {1}{T_f(r)}\int_{S(r)}\log \vert f^\prime \vert ^2_{\omega}d\mu_r.
\nonumber
\end{split}
\end{equation}

\noindent Let $h:= h_s\exp(-\psi_{\rm sing})$; it is a metric with \emph{bounded} weights
of $T_\cF$, hence we can use it in order to compute the quantity we are interested in, namely
$$\int_XT[f]\wedge c_1(T_\cF)= \lim_r\langle T_r[f], \Theta_h(T_\cF) \rangle.$$
We recall that we have the formula 
$$\Theta_h(T_\cF)= \Theta_{h_s}(T_\cF)+ dd^c\psi_{\rm sing},$$
so as a consequence we infer that we have
$$\int_X T_r[f]\wedge \Theta_{h}(T_{\cF}) = \langle T_r[f]\wedge \Theta_{h_s}(T_{\cF})\rangle+ \frac{1}{T_f(r)}\int_{S(r)}\psi_{\rm sing}\circ f d\mu_r+ o(1).
$$
By combining the relations above we infer that
we have 
\begin{equation} 
\begin{split}
\int_X T_r[f]\wedge \Theta_{h}(T_{\cF})\geq & -\nu^T(f, \cF_{\rm sing})(r)+ \frac{1}{T_f(r)}\int_{S(r)}\psi_{\rm sing}\circ f d\mu_r \\
 & + \frac {1}{T_f(r)}R_{f}(r) -
 \frac {1}{T_f(r)}\int_{S(r)}\log \vert f^\prime \vert ^2_{\omega}d\mu_r+ o(1).
\nonumber
\end{split}
\end{equation}

\noindent By the logarithmic derivative lemma (or rather by an estimate as in Theorem 4.2) 
the last term of the preceding relation tends to a positive value, as
$r\to\infty$, hence we obtain
$$\int_X T_r[f]\wedge \Theta_{h}(T_{\cF})\geq -\nu^T(f, \cF_{\rm sing})- m^T(f, \cF_{\rm sing})+
\overline \nu({\rm Ram}, f)$$
and Theorem \ref{monq} is proved. 
\end{proof}
\medskip

\noindent Before stating a version of 
Theorem \ref{monq}, we note the following observations.
The lower bound obtained in Theorem \ref{monq} admits an easy interpretation, as follows.

Let $\cJ$ be the ideal sheaf defined by the scheme $\cF_{\rm sing}$; locally, this ideal is generated by the 
coefficients of the vectors $(v_\alpha)$ defining the foliation $\cF$. Let $p:\wh X\to X$ be 
a principalization of $\cJ$, so that $p^\star (\cJ)= \cO(-D)$ for some (normal crossing) effective divisor $D$ on
$\wh X$. According to Theorem 3.5, we have
$$T_{\wh f, \Theta_{D}}(r)= N_{\wh f, \cJ}(r)+ m_{\wh f, \cJ}(r)+ \cO(1)$$
where $\wh f$ is the lift of the map $f$ to $\wh X$. As a consequence, we infer the 
relation
$$\int_{\wh X}T[\wh f]\wedge c_1(D)\geq \nu^T(\wh f, \cF_{\rm sing})+ m^T(\wh f, \cF_{\rm sing})$$ 
and therefore Theorem \ref{monq} applied to $\wh f, \wh \cF$ can be restated as follows.
\medskip

\begin{corollary} We have the inequality
$$\displaystyle \int_{\wh X}T[\wh f]\wedge \big(c_1(T_{\wh\cF})+ c_1(D)\big)\geq 0.\leqno(38)$$ 
\end{corollary}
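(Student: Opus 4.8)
The plan is to combine the blow-up comparison of characteristic functions recorded in Theorem~\ref{t4} with the lower bound of Theorem~\ref{monq}, both applied on the manifold $\wh X$ equipped with the lifted foliation $\wh\cF$. First I would observe that because $p:\wh X\to X$ is a modification which is an isomorphism outside the (codimension $\geq 2$) singular locus of $\cF$, the foliation $\cF$ lifts to a holomorphic foliation $\wh\cF$ on $\wh X$, and the map $f$ lifts to a holomorphic map $\wh f:\cY\to \wh X$ with $p\circ\wh f=f$, whose image is again Zariski dense and tangent to $\wh\cF$. The Euler characteristic hypothesis on $\cY$ is unchanged, so Theorem~\ref{monq} applies verbatim to $(\wh X,\wh\cF)$ and $\wh f$, giving
\begin{equation}
\int_{\wh X}T[\wh f]\wedge c_1(T_{\wh\cF})\geq -\nu^T(\wh f,\wh\cF_{\rm sing})- m^T(\wh f,\wh\cF_{\rm sing})+\overline\nu({\rm Ram},\wh f).
\nonumber
\end{equation}

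Next I would bring in the blow-up side. Since $p^\star\cJ=\cO_{\wh X}(-D)$, the first main theorem for ideals (Theorem~\ref{t4}) applied to $\wh f$ and the ideal $\cJ$ reads $T_{\wh f,\Theta_D}(r)=N_{\wh f,\cJ}(r)+m_{\wh f,\cJ}(r)+\cO(1)$. Dividing by $T_f(r)=T_{\wh f}(r)+\cO(1)$ and passing to $\overline\lim$, and using that the proximity term is (asymptotically) nonnegative after the normalization $\sup|s|=1$ as in Remark~3.5, I obtain
\begin{equation}
\int_{\wh X}T[\wh f]\wedge c_1(D)=\lim_r\langle T_r[\wh f],\Theta_D\rangle\geq \nu^T(\wh f,\wh\cF_{\rm sing})+ m^T(\wh f,\wh\cF_{\rm sing}).
\nonumber
\end{equation}
Here I would need to check that the quantities $\nu^T$ and $m^T$ computed on $\wh X$ with respect to $\psi_{\wh\cF_{\rm sing}}$ agree, up to the usual bounded error, with $N_{\wh f,\cJ}(r)$ and $m_{\wh f,\cJ}(r)$: this is exactly the content of relation (12), namely $\psi_\cJ\circ p=\sum_j e_j\log|s_j|^2_{h_j}+\tau$ with $\tau$ bounded, so the singular parts of $dd^c(\psi_{\wh\cF_{\rm sing}}\circ\wh f)$ and $\sum_j e_j\,\wh f^\star[W_j]$ coincide, and likewise for the boundary integrals.

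Adding the two displayed inequalities, the two occurrences of $\nu^T(\wh f,\wh\cF_{\rm sing})+m^T(\wh f,\wh\cF_{\rm sing})$ cancel, and since $\overline\nu({\rm Ram},\wh f)\geq 0$ (it is a limit of normalized counting functions, hence nonnegative) I am left with
\begin{equation}
\int_{\wh X}T[\wh f]\wedge\big(c_1(T_{\wh\cF})+c_1(D)\big)\geq \overline\nu({\rm Ram},\wh f)\geq 0,
\nonumber
\end{equation}
which is (38). The one genuinely delicate point, and the place I would spend the most care, is the bookkeeping in the previous paragraph: one must verify that the singular ideal $\cJ_{\wh\cF_{\rm sing}}$ of the lifted foliation and the pulled-back ideal $p^\star\cJ=\cO(-D)$ define the same counting and proximity data for $\wh f$ up to bounded terms — equivalently, that $\psi_{\wh\cF_{\rm sing}}$ and $\psi_\cJ\circ p$ differ by a bounded function — so that Theorem~\ref{t4} really computes the terms appearing in Theorem~\ref{monq}. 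Once that identification is in place the corollary is a one-line cancellation; the rest (lifting the foliation, Zariski density of $\wh f$, equality of characteristic functions under a modification) is routine.
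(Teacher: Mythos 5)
Your argument is exactly the paper's: the authors also combine Theorem~\ref{t4} (the first main theorem for the ideal $\cJ$, giving $\int_{\wh X}T[\wh f]\wedge c_1(D)\geq \nu^T+m^T$) with Theorem~\ref{monq} applied to $(\wh X,\wh\cF,\wh f)$, and conclude by cancellation and positivity of the ramification term. Your extra care about identifying $\psi_{\wh\cF_{\rm sing}}$ with $\psi_\cJ\circ p$ via relation (12) is a point the paper passes over silently, and it is the right thing to check.
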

\medskip

\noindent In a similar framework, we note the following
``tautological" inequality in parabolic context.

\begin{lemma}\label{trivia}
Let $f:(\cY, \sigma) \to X$ be a holomorphic curve, where $\cY$ is a parabolic Riemann surface. 
We denote by $f_1: \cY\to \bP(T_X)$ the lift of $f$. Let $h$ be a hermitian metric on
$X$; we denote by the same letter the metric induced on the bundle $\cO(-1)\to \bP(T_X)$.
Then we have
$$\langle T_r[f_1], \Theta_h\big(\cO(-1)\big)\rangle \geq -C\big(\log T_f(r)+ \log r+ \mathfrak{X}_\sigma (r)\big)$$
where the positive constant $C$ above depends on $(X, h)$. In particular, 
if $\displaystyle \frac{\mathfrak{X}_\sigma (r)+ \log r}{T_f(r)}\to 0$ as $r\to \infty$, then we 
can 
construct a Nevanlinna current $T[f_1]$ associated to $f_1$ such that 
$\pi_\star T[f_1]= T[f]$, and we infer 
$$\langle T[f_1], \Theta_h\big(\cO(-1)\big)\rangle \geq 0.$$
\end{lemma}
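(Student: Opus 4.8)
The plan is to reduce the inequality to the \emph{tautological identity} relating the pulled-back tautological bundle on $\bP(T_X)$ to the derivative of $f$, and then to apply the logarithmic derivative lemma exactly as in the proof of the vanishing theorem. First I would recall that by the very construction of $f_1$, the point $f_1(t)\in\bP(T_X)$ is the direction $[df_t(\xi)]$, and that $df(\xi)=f'$ defines a holomorphic section of $f_1^\star\big(\cO(-1)\big)$ over $\cY$ (this is the same observation used in the proof of Theorem \ref{t7}, specialised to $k=1$). Consequently, if $h$ denotes the metric on $\cO(-1)$ induced by a fixed smooth hermitian metric on $X$, the function $t\mapsto |f'(t)|^2_h$ is, up to the choice of local trivialisations, just $|f'(t)|^2_\omega$, and the Poincar\'e--Lelong equation gives
\begin{equation}
dd^c\log|f'|^2_h = [\mathrm{Ram}_f] - f_1^\star\Theta_h\big(\cO(-1)\big),
\nonumber
\end{equation}
where $[\mathrm{Ram}_f]\ge 0$ is the divisor of zeros of the section $f'$, i.e. the ramification divisor of $f$.

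Next I would integrate this identity against $\int_1^r\frac{dt}{t}\int_{\bB(t)}(\cdot)$ and apply the Jensen formula (Proposition \ref{p3}). Discarding the nonnegative ramification term, this yields
\begin{equation}
\langle T_r[f_1],\Theta_h(\cO(-1))\rangle \;\ge\; -\frac{1}{T_f(r)}\int_{S(r)}\log|f'|^2_h\,d\mu_r + \cO\!\left(\tfrac{1}{T_f(r)}\right).
\nonumber
\end{equation}
Now the right-hand integral is controlled exactly as in the proof of Theorem \ref{t7}: choosing finitely many rational (meromorphic) functions $u_j$ on $X$ that serve as local coordinates, one bounds $\log^+|f'|^2_h$ pointwise by $C\sum_j\log^+\frac{|d(u_j\circ f)(\xi)|^2}{|u_j\circ f|^2}$, and the logarithmic derivative lemma (Theorem \ref{t5}) gives
\begin{equation}
\int_{S(r)}\log^+|f'|^2_h\,d\mu_r \;\le\; C\big(\log T_f(r)+\log r + \mathfrak{X}_\sigma(r)\big)
\nonumber
\end{equation}
for all $r$ outside a set of finite Lebesgue measure. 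Combining the two displays gives the stated lower bound, with the constant $C$ depending only on $(X,h)$ through the choice of the $u_j$.

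For the final assertion, under the hypothesis $\big(\mathfrak{X}_\sigma(r)+\log r\big)/T_f(r)\to 0$ the lower bound shows $\langle T_r[f_1],\Theta_h(\cO(-1))\rangle\ge -o(1)$. To extract an actual current one needs the sequence $T_r[f_1]$ (or a subsequence) to converge to a closed positive current: this follows by the same length--area argument as in Theorem \ref{t12} applied to $f_1$ (using that $f_1$ has infinite area since $T_f(r)\to\infty$ faster than $\log r$, and that the Euler-characteristic correction is negligible by hypothesis), and one may moreover arrange $\pi_\star T[f_1]=T[f]$ by passing to a common subsequence along which both $T_r[f]$ and $T_r[f_1]$ converge, since $\pi\circ f_1=f$ makes $\pi_\star T_r[f_1]=T_r[f]$ for every $r$. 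Passing to the limit in the inequality then yields $\langle T[f_1],\Theta_h(\cO(-1))\rangle\ge 0$. The main obstacle I anticipate is not any single estimate but the bookkeeping of the exceptional set in the logarithmic derivative lemma: one must choose the sequence $r_k\to\infty$ defining $T[f_1]$ to avoid the finite-measure bad set, and simultaneously to realise both the convergence of the currents and the compatibility $\pi_\star T[f_1]=T[f]$ — a routine but slightly delicate diagonal extraction.
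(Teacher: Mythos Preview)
Your proposal is correct and follows essentially the same route as the paper: the paper's proof simply observes that $f'=df(\xi)$ is a section of $f_1^\star\cO(-1)$ and then invokes Jensen's formula together with the logarithmic derivative lemma, referring back to the argument of Theorem~\ref{t7} for the details---which is precisely what you carry out. Your added remarks on the ramification divisor, on extracting a common subsequence to ensure $\pi_\star T[f_1]=T[f]$, and on avoiding the exceptional set from the logarithmic derivative lemma are the natural elaborations the paper omits.
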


\begin{proof}
We note that the derivative $f^\prime:= df(\xi)$ can be seen as a section of
the bundle $f_1^\star\cO(-1)$. So the lemma follows from Jensen formula combined with logarithmic derivative lemma
by an argument already used in Theorem 4.1 for sections of $\displaystyle \cO_{X_k}(m)\otimes A^{-1}$; we offer no further details.
\end{proof}

\noindent We turn next to the logarithmic version of Theorem \ref{monq}.  
 
\begin{theorem}
\label{monq, B}
Let $(X, B)$ be a log-smooth pair, such that every component of the support of $B$ is invariant by the foliation $\cF$. Let $(\cY, \sigma)$ be a parabolic Riemann surface 
and let $f:\cY\to X$ be a holomorphic map whose image is tangent to $\cF$
such that $\mathfrak{X}_\sigma (r)= o(T_f(r))$. We assume that the image of $f$
is not contained in the set $\Supp(B)$. Then we have
the inequality
$$\int_XT[f]\wedge c_1(T_{\cF})\geq -\nu^T(f, \cF_{\rm sing, B})- \nu^T_1(f, \cF_{\rm sing}\cap B)
-m^T(f, \cF_{\rm sing, B}).
\leqno (\star_{\rm B})$$ 
\end{theorem}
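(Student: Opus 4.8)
The plan is to follow the same strategy as in the proof of Theorem \ref{monq}, but working throughout with the logarithmic data attached to $(X,B)$ rather than the ``plain'' data. First I would fix a smooth metric $\omega_{X,B}$ on the logarithmic tangent bundle $T_X\langle B\rangle$ as described in the preceding subsection, and let $h_{s,B}$ be the (generally singular) metric it induces on $T_{\cF}$ via the section $V_B\in H^0(X,T_X\langle B\rangle\otimes T_{\cF}^\star)$, with local weights $\phi_{\alpha,B}=-\log\sum a^i_\alpha\overline{a^j_\alpha}\omega_{i\overline j}$ from \eqref{32}. The key point, already emphasized after \eqref{32}, is that $\phi_{\alpha,B}$ is \emph{less singular} than $\phi_\alpha$: the singularity of $h_{s,B}$ along $\Supp(B)$ is only logarithmic, so the Dirac masses that arise when one writes $f^\prime(t)=\lambda(t)v_{\alpha,B,f(t)}$ and takes $dd^c\log$ are governed by $\cJ_{\cF_{\rm s,B}}$ together with a truncated contribution along $B$, rather than by $\cJ_{\cF_{\rm s}}$.

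Next I would carry out the integration-by-parts computation verbatim as in Theorem \ref{monq}. Writing $\int_X T_r[f]\wedge\Theta_{h_{s,B}}(T_{\cF})$ as a sum over the partition of unity $(\rho_\alpha)$ of terms $-\frac1{T_f(r)}\int_1^r\frac{dt}t\int_{\bB_\alpha(t)}\rho_\alpha(f)f^\star dd^c\log|v_{\alpha,B}|^2_\omega$, and splitting the parabolic balls into the pieces $\bB_{\alpha,\varepsilon}(t)$ and $\bB_{\alpha,\varepsilon}^c(t)$ according to distance to the (now larger) zero scheme of $v_{\alpha,B}$, the same $\varepsilon\to0$ limit identifies the boundary contribution as $-\int_{S(r)}\log|f^\prime|^2_\omega d\mu_r$, the interior Dirac masses at critical points $f(t_j^\prime)\notin\cF_{\rm sing}\cup B$ as $R_f(r)$-type terms, and the masses coming from $f^\star dd^c\log|v_{\alpha,B}|^2_\omega$ as $N_{f,\cJ_{\cF_{\rm s,B}}}(r)$ plus a \emph{truncated} count $N^{(1)}_{f,\cJ_{\cF_{\rm s}}\cap B}(r)$ along $B$ — the truncation appearing precisely because $v_{\alpha,B}$ differs from $v_\alpha$ by the factors $z_i$, each of which contributes a simple zero independently of the order of tangency of $f$ to $B$. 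This yields
\[
\langle T_r[f],\Theta_{h_{s,B}}(T_{\cF})\rangle\geq
-\nu^T(f,\cF_{\rm sing,B})(r)-\nu^T_1(f,\cF_{\rm sing}\cap B)(r)
+\frac1{T_f(r)}R_f(r)-\frac1{T_f(r)}\int_{S(r)}\log|f^\prime|^2_\omega d\mu_r.
\]
Then, exactly as before, I pass from $h_{s,B}$ to the bounded-weight metric $h:=h_{s,B}\exp(-\psi_{\rm sing,B})$, so that $\Theta_h(T_{\cF})=\Theta_{h_{s,B}}(T_{\cF})+dd^c\psi_{\rm sing,B}$ and $\int_X T[f]\wedge c_1(T_{\cF})=\lim_r\langle T_r[f],\Theta_h(T_{\cF})\rangle$; Jensen's formula converts $dd^c\psi_{\rm sing,B}$ into the boundary term $\frac1{T_f(r)}\int_{S(r)}\psi_{\rm sing,B}\circ f\,d\mu_r$, whose upper limit is $-m^T(f,\cF_{\rm sing,B})$.

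Finally I would invoke the logarithmic derivative lemma (Theorem \ref{t5}, in the refined form used in Theorem \ref{t7} and again in Theorem \ref{monq}) to control the ramification-type term $\frac1{T_f(r)}R_f(r)-\frac1{T_f(r)}\int_{S(r)}\log|f^\prime|^2_\omega\,d\mu_r$ from below: since $\cY$ has finite Euler characteristic, $\mathfrak{X}_\sigma(r)=\cO(\log r)=o(T_f(r))$ (here one uses that the image is not contained in $\Supp(B)$, so $T_f(r)\to\infty$ suitably), and this term has non-negative lower limit. Combining the three displayed estimates gives $(\star_{\rm B})$. The only genuinely new bookkeeping compared with Theorem \ref{monq} is the appearance of the truncated term $\nu^T_1(f,\cF_{\rm sing}\cap B)$, and I expect the main obstacle to be exactly this: one must check carefully that the excess zeros introduced along $B$ by passing from $v_\alpha$ to $v_{\alpha,B}$ contribute at most the \emph{level-one} truncated counting function \eqref{36} and not the full counting function — i.e. that $f$ being tangent to $\cF$ and $B$ being $\cF$-invariant forces the relevant zero orders along each $W_j$ to behave additively and independently of the contact order — and that no extra boundary terms are lost when the metric $\omega_{X,B}$ itself blows up along $\Supp(B)$. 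Once that local computation is pinned down, the rest is a line-by-line repetition of the proof of Theorem \ref{monq}.
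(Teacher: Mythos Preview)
Your approach is essentially the paper's: repeat the computation of Theorem~\ref{monq} with the logarithmic metric $\omega_{X,B}$ on $T_X\langle B\rangle$ in place of $\omega$, identify the new truncated term, and finish with the logarithmic derivative lemma. The passage to the bounded-weight metric $h=h_{s,B}\exp(-\psi_{\rm sing,B})$ and the handling of the boundary term via Jensen are exactly as in the paper.

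There is one genuine misattribution in your bookkeeping, though, and it is precisely at the step you flag as ``the main obstacle''. You write that the truncated count $N^{(1)}_{f,\cJ_{\cF_{\rm s}}\cap B}(r)$ arises among ``the masses coming from $f^\star dd^c\log|v_{\alpha,B}|^2$'', because ``$v_{\alpha,B}$ differs from $v_\alpha$ by the factors $z_i$''. But $v_{\alpha,B}$ and $v_\alpha$ are the \emph{same} vector field; only the frame changes. The local weight $\phi_{\alpha,B}=-\log\sum a^i_\alpha\overline{a^j_\alpha}\omega_{i\overline j}$ depends only on the $(a^i_\alpha)$, so $f^\star dd^c\log|v_{\alpha,B}|^2_{\omega_{X,B}}$ contributes exactly $N_{f,\cJ_{\cF_{\rm s,B}}}(r)$ and nothing more. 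The truncated term instead appears on the other side of the identity, in $dd^c\log|f'|^2_{\omega_{X,B}}$: the poles $|z_j|^{-2}$ of $\omega_{X,B}$ along $B$ produce, at each $t''$ with $f(t'')\in\cF_{\rm sing}\cap B$, a Dirac mass of weight exactly $-1$ in $dd^c\log|f'|^2_{\omega_{X,B}}$, independently of the contact order of $f$ with $B$. (Since $B$ is $\cF$-invariant and $f$ is tangent to $\cF$ but not contained in $B$, these intersection points are forced to lie in $\cF_{\rm sing}$.) This is exactly the displayed formula in the paper's proof. So your closing worry that ``extra boundary terms'' might be ``lost when the metric $\omega_{X,B}$ itself blows up along $\Supp(B)$'' is misplaced: the blow-up of the metric is not an obstacle to be controlled, it is the \emph{source} of the term $-\nu^T_1(f,\cF_{\rm sing}\cap B)$. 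Once you relocate the truncated contribution to $dd^c\log|f'|^2_{\omega_{X,B}}$, the rest of your outline goes through verbatim, including the observation that the logarithmic derivative lemma still bounds $\overline\lim_r\frac{1}{T_f(r)}\int_{S(r)}\log|f'|^2_{\omega_{X,B}}\,d\mu_r$ from above by zero despite those poles.
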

\medskip

\noindent The proof of Theorem \ref{monq, B} follows from the arguments we have used for 
\ref{monq}. The additional negative term in the statement 
is due to the singularities of the metric $\omega_{X, B}$:
\begin{equation} 
\begin{split}
\lim_{\varepsilon\to 0}\int_1^r\frac{dt}{t}\int_{\bB_{\alpha, \varepsilon}(t)}
\rho_\alpha(f)dd^c
\log \vert f^\prime\vert ^2_{\omega_{X, B}} = & \int_1^r\frac{dt}{t}\int_{\bB_{\alpha}(t)}
\rho_\alpha(f)dd^c
\log \vert f^\prime\vert ^2_{\omega_{X, B}}\\
- & 
\sum_{f(t_j^\prime)\in \cF_{\rm sing}\setminus B} \rho_\alpha\big(f(t_j^\prime)\big)\nu_j\log \frac{r}{\sigma(t_j^\prime)}\\
+ & \sum_{f(t_j^{\prime\prime})\in \cF_{\rm sing}\cap B} \rho_\alpha\big(f(t_j^{\prime\prime})\big)\log \frac{r}{\sigma(t_j^{\prime\prime})}.
\nonumber
\end{split}
\end{equation} 
where we denote by $t_j^\prime$ the critical points of $f$. The points $t_j^{\prime\prime}$
appearing in the last expression above are not necessarily critical.
We remark (as in Theorem 4.2) that the limit
$$\overline \lim_r\frac {1}{T_f(r)}\int_{S(r)}\log {\vert f^\prime \vert ^2_{\omega_{X, B}}}
d\mu_r$$
is non-positive, as it follows from the logarithmic derivative lemma, i.e. this term 
is not affected by the poles of $\omega_{X, B}$. 

In conclusion, the presence of a
log-smooth divisor on $X$ invariant by $\cF$ improves substantially the lower bound we have obtained in
Theorem \ref{monq}, since the main negative terms are defined by $\cJ_{\cF_s, B}$.  
\qed

\medskip 

\subsection{Foliations with reduced singularities on surfaces}

In this subsection we assume that the dimension of $X$ is equal to $n= 2$. As an application of the results in the preceding paragraph, we obtain here a complete analogue of some results originally
due to Michael McQuillan \cite{McQ}.

The twisted vector 
field $V$ defining the foliation $\cF$ is locally given by the expression
$$v_\alpha= a_{\alpha 1}\frac{\partial}{\partial z}+ a_{\alpha 2}\frac{\partial}{\partial w}.$$
In this paragraph we will assume that \emph{the singularities of $\cF$ are reduced}, i.e.
the linearization of the vector field at a singular point has at least a non-zero eigenvalue.
A result by Seidenberg implies that this situation can be achieved after finitely many 
monoidal transformations.
We extract next the following important consequences from the classification theory of foliations with reduced singularities in dimension two \cite{Seiden}.

\begin{enumerate} 

\item[($s_1$)] \emph{A singular point $x_0$ of $\cF$ is called non-degenerate if we have 
$$C^{-1}\leq \frac {|a_{\alpha 1}(z, w)|^2+ |a_{\alpha 2}(z, w)|^2}{|z|^2+ |w|^2}\leq C$$ 
for some coordinate system $(z, w)$ centered at $x_0$.} 
\smallskip

\item[($s_2$)] \emph{A singular point $x_1$ of $\cF$ is called degenerate if we have 
$$C^{-1}\leq \frac {|a_{\alpha 1}(z, w)|^2+ |a_{\alpha 2}(z, w)|^2}{|z|^2+ |w|^{2k}}\leq C$$ 
for some coordinate system $(z, w)$ centered at $x_1$, where $k\geq 2$ is an integer.}
\smallskip

\item[($s_3$)] \emph{Any singular point of $\cF$ is either non-degenerate or degenerate.}

\smallskip

\item[($s_4$)] \emph{For any blow-up $p: \wh X\to X$ of a point $x_{0}$ on the surface $X$ we denote by $\wh \cF:= p^{-1}(\cF)$ the induced foliation on $\wh X$. Then we have 
$$\displaystyle p^\star T_{\cF}= T_{\wh \cF}$$
and moreover, the foliations $\cF$ and $\wh \cF$ have the same 
number of degenerate singular points. In addition, the number $``k"$ appearing in the inequality 
($s_2$) is invariant.}
\end{enumerate}

\medskip

\noindent For a proof of the preceding claims ($s_1$)--($s_3$)
we refer to the paper \cite{Seiden}. As for the property ($s_4$), we can verify it by an explicit computation, 
as follows.

Locally near the point $x_{0}$ the equations of the blow-up map $p$ are given by
$$(x, y)\to (x, xy)\quad \hbox{ or } (x, y)\to (xy, x)$$
corresponding to the two charts covering $\bP^1$. Then the expression of the vector field
defining the foliation $\wh \cF$ on the first chart is as follows
$$a_{\alpha 1}(x, xy)\frac{\partial}{\partial x}+ 
\Big(\frac{a_{\alpha 2}(x, xy)}{x}- y\frac{a_{\alpha 1}(x, xy)}{x}\Big)\frac{\partial}{\partial y}.$$
We denote by $A_{\alpha 1}$ and $A_{\alpha 2}$ the coefficient of $\frac{\partial}{\partial x}$ and 
$\frac{\partial}{\partial y}$ in the expression above, respectively. They are holomorphic functions, and the set of their common zeroes is discrete. Indeed, this is clear if the singularity 
$x_0$ is non-degenerate. If $x_0$ is a degenerate singularity of $\cF$, then this can be
verified by an explicit computation, given the normal form \cite{Seiden}
$$v_\alpha= \big(z(1+\tau w^k)+ wF(z,w)\big)\frac{\partial}{\partial z}+ w^{k+1}\frac{\partial}{\partial w},$$
where $F$ is a function vanishing to order $k$, and $\tau$ is a complex number. In conclusion, the transition functions for the tangent bundle of $\wh\cF$ are the same as the ones 
corresponding to $\cF$, modulo composition with the blow-up map $p$, so our statement is proved.

\subsubsection{Intersection with the tangent bundle}

\smallskip

\noindent In the context of foliations with reduced singularities, the lower bound obtained in Theorem \ref{monq} can be improved substantially, as follows.

\begin{theorem}\label{taut}
Let $(X, \cF)$ be a non-singular compact complex surface, endowed with a foliation. Let
$f:\cY\to X$ be a Zariski dense holomorphic map tangent to $\cF$; here
$(\cY, \sigma)$ is a parabolic Riemann surface 
such that $\mathfrak{X}_\sigma (r)= o(T_f(r))$. If the singularities of $\cF$ are reduced, 
then we have 
$$\int_XT[f]\wedge c_1(T_{\cF})\geq 0.\leqno(39)$$
\end{theorem}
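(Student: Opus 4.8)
The plan is to derive \eqref{39} from Theorem \ref{monq} by showing that, for a foliation with reduced singularities on a surface, every negative term on the right-hand side of $(\star)$ is controlled by the ramification term. Recall that $(\star)$ reads
$$\int_XT[f]\wedge c_1(T_{\cF})\geq -\nu^T(f, \cF_{\rm sing})- m^T(f, \cF_{\rm sing})+\overline \nu({\rm Ram}, f),$$
and that $R_f(r)=M_f(r)+N_f({\rm Ram},r)$, where $M_f$ collects the contribution of critical points $t_j'$ with $f(t_j')\in\cF_{\rm sing}$. So it suffices to prove a \emph{local} inequality near each singular point: if $f$ maps a small disk into a neighbourhood of a reduced singularity $x_0$, then the vanishing order $\nu_j$ of $\psi_{\rm sing}\circ f$ at a point $t_j$ with $f(t_j)=x_0$ (the counting-function contribution), together with the proximity contribution, is dominated by the vanishing order $\mu_j$ of $f'$ at $t_j$ (the ramification contribution). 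The point is that a parabolic curve tangent to $\cF$ passing through a reduced singular point must be ramified there, and quantitatively so.

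First I would set up the local picture. By $(s_1)$--$(s_3)$, near a reduced singular point the vector field satisfies $C^{-1}\le (|a_{\alpha1}|^2+|a_{\alpha2}|^2)/(|z|^2+|w|^{2k})\le C$ for some $k\ge1$ (with $k=1$ in the non-degenerate case). Hence $\psi_{\rm sing}\circ f = \log(|a_{\alpha1}(f)|^2+|a_{\alpha2}(f)|^2) + O(1) \ge \log(|z\circ f|^2+|w\circ f|^{2k})+O(1)$, which is bounded below; in particular the proximity integrand $-\psi_{\rm sing}\circ f$ is bounded above near such points, so these points contribute nothing harmful to $m^T$ — the proximity term is handled by the normalization $\sup_X|s|=1$ exactly as in Remark 3.5, and is asymptotically non-positive. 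The real work is with $\nu^T(f,\cF_{\rm sing})$. Writing $f'(t)=\lambda(t)v_{\alpha,f(t)}$, the multiplicity of $\psi_{\rm sing}\circ f$ at $t_j$ is (up to the $k$-weighting) essentially $\mathrm{ord}_{t_j}(z\circ f,\,w\circ f)$ measured against the quasi-homogeneous weight $(1,k)$, while $\mu_j=\mathrm{ord}_{t_j}\lambda$. Since $f$ is tangent to $\cF$ and passes through $x_0$, integrating the foliation's normal form along $f$ forces $\lambda$ to vanish at $t_j$ to an order at least that needed to absorb $\nu_j$. Concretely, in the non-degenerate case ($k=1$) a reduced singularity is linearizable (or a node/saddle-node), and a tangent curve through it is a separatrix up to the order it meets the singular point; a direct computation with $v_\alpha=\lambda_1 z\,\partial_z+\lambda_2 w\,\partial_w$ (or the saddle-node normal form) shows $\mathrm{ord}_{t_j}\lambda \ge \nu_j$ with room to spare; the degenerate case uses the normal form $v_\alpha=(z(1+\tau w^k)+wF)\partial_z+w^{k+1}\partial_w$ from $(s_2)$ in the same way, and the invariance statement $(s_4)$ guarantees the estimate is stable under the blow-ups used by Seidenberg to reach reduced singularities.

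Assembling these local inequalities, I would sum over all $t_j$ with $f(t_j)\in\cF_{\rm sing}$ inside $\bB(r)$, obtaining $N_{f,\cJ_{\cF_s}}(r)\le M_f(r)+O(\log r)$, hence after dividing by $T_f(r)$ and taking $\overline\lim$,
$$\nu^T(f,\cF_{\rm sing})\le \overline\lim_r\frac{M_f(r)}{T_f(r)}\le \overline\nu({\rm Ram},f).$$
Combined with the non-positivity of $-m^T(f,\cF_{\rm sing})$ (or more precisely, combining the $m^T$ and $\nu^T$ terms and comparing with $R_f=M_f+N_f({\rm Ram})$), $(\star)$ collapses to $\int_XT[f]\wedge c_1(T_\cF)\ge0$. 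Note that the hypothesis of finite Euler characteristic, present in Theorem \ref{monq}, can be dropped here because all the error terms that were previously absorbed into $\mathfrak X_\sigma(r)$ now appear only as $O(\log r)$ local contributions or get cancelled in the ramification balance; this is what lets the statement hold for $\cY$ not necessarily of finite Euler characteristic.

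The main obstacle I anticipate is the degenerate (saddle-node) case of the local ramification estimate: there one must work with the normal form $v_\alpha=(z(1+\tau w^k)+wF(z,w))\partial_z+w^{k+1}\partial_w$ and carefully track, for a curve $f$ tangent to it and hitting the origin, how the $(1,k)$-weighted vanishing order of $(z\circ f,w\circ f)$ compares to $\mathrm{ord}\,\lambda$; the strong separatrix through the saddle-node and the behaviour of solutions along the central manifold have to be analyzed, and one must check the inequality is uniform in the parameters $\tau, F$. A secondary subtlety is bookkeeping the sign of the proximity term $m^T(f,\cF_{\rm sing})$ and making sure nothing is lost when passing from the $\varepsilon\to0$ limits in the proof of Theorem \ref{monq} to the reduced-singularity refinement; but these are of the same nature as the estimates already carried out there.
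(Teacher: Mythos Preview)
Your approach has a genuine gap: the local inequality you need, $\nu_j\le\mu_j$, is false, and the proximity term is not harmless. Take the simplest non-degenerate reduced singularity, $v_\alpha=\lambda_1 z\,\partial_z+\lambda_2 w\,\partial_w$ with $\lambda_2/\lambda_1\notin\bQ_{>0}$. The only leaves through the origin are the separatrices $z=0$ and $w=0$, so if $f(t_j)=0$ then locally, say, $z\circ f\equiv 0$ and $w\circ f=t^b u(t)$ with $u(0)\ne0$. Then $\psi_{\rm sing}\circ f=\log|w\circ f|^2+O(1)$ gives $\nu_j=b$, while $f'=(0,(w\circ f)')$ gives $\mu_j=b-1$; hence $\nu_j=\mu_j+1$, the wrong direction. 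Summing, $N_{f,\cJ_{\cF_s}}(r)=M_f(r)+N^{(1)}_{f,\cF_{\rm sing}}(r)$, and $(\star)$ leaves you with a residual \emph{negative} truncated counting term that does not cancel. You also have the sign of the proximity term backwards: $\psi_{\rm sing}\circ f=\log(|z\circ f|^2+|w\circ f|^{2k})+O(1)\to-\infty$ as $f\to x_0$, so $-\psi_{\rm sing}\circ f$ is unbounded \emph{above}, and $m^T(f,\cF_{\rm sing})\ge 0$ is a genuine negative contribution in $(\star)$.

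The paper's proof is of a different nature and does not attempt a one-shot local comparison. One blows up each singular point repeatedly; after two rounds at a non-degenerate point the exceptional configuration $B=\widehat E_1+E_2+E_3$ is $\cF$-invariant, and the \emph{logarithmic} estimate (Theorem \ref{monq, B}) replaces $\nu^T+m^T$ by the truncated term $\nu^T_1(f,\widehat x_k)+\nu^T_1(f,\widehat y_k)$ at two extremal points only (the other new singularities sit at intersections of exceptional curves, both separatrices, so a Zariski-dense $f$ cannot meet them). Iterating, $\nu^T_1(f,\widehat x_k)=\rho_k:=\int_{X_k}T[f_k]\wedge c_1(E_k)$ and $\sum_k\rho_k^2\le\int_X\{T[f]\}^2<\infty$, so $\rho_k\to0$. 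Since reduced singularities give $p^\star T_\cF=T_{\widehat\cF}$ (property $(s_4)$), projection yields $\int_XT[f]\wedge c_1(T_\cF)\ge-\varepsilon_k$ for every $k$, hence $\ge0$. The missing idea in your proposal is precisely this infinite blow-up tower together with the $\ell^2$-summability of the $\rho_k$; a single local ramification balance cannot absorb the surviving truncated counting term.
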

\smallskip

\begin{proof}
If $\cY= \bC$, then Theorem \ref{taut} is one of the key result established in \cite{McQ}. The original arguments in this article can be adapted to the parabolic setting we are interested in,
as we will sketch next.
\smallskip

\noindent $\bullet$ Let $x_0\in \cF_{\rm sing}$ be a singular point of the foliation. We will assume for simplicity that $p$ is non-degenerate; the general case is a little bit more complicated technically, but the main ideas are the same. 
We denote by $\pi: X_1\to X$ the blow-up of $X$ at 
$x_0$, and let $E_1$ be the corresponding exceptional divisor. Let $\cF_1$ be the foliation 
$\pi^\star \cF$ on $X_1$; then $E_1$ is an invariant curve. The foliation $\cF_1$ has two
singularities on $E_1$, say $\wh x_1$ and $\wh y_1$, both non-degenerate. 

We repeat this procedure, and blow-up $\wh x_1$ and $\wh y_1$. On the surface $X_2$ obtained in this way, the inverse image of $x_0$ is equal to 
$$B:= \wh E_1+ E_2+ E_3,$$ where $\wh E_1$ is the proper transform of $E_1$ and $E_2, E_3$ are the
exceptional divisors. In the new configuration, we have 4 singular points of the 
induced foliation $\cF_2$. Two of them belong to $\wh E_1\cap E_2$ and
$\wh E_1\cap E_3$, respectively, and we denote $\wh x_2$ and $\wh y_2$ the other ones. 

We have the injection of sheaves
$$0\to T_{\wh\cF}\to T_{\wh X}\langle B\rangle \leqno(40)$$
that is to say, the tangent bundle of $\cF_2$ is a subsheaf of the logarithmic tangent bundle of 
$(X, B)$. The metric on $T_{\wh\cF}$ induced by the morphism above 
is \emph{non-singular} at each of the four singular points above, and moreover, the image of the lift
of the transcendental curve to $X_2$ do not intersect $\wh E_1\cap E_2$ or
$\wh E_1\cap E_3$. Indeed, these singularities of the foliation $\wh \cF$ have the property 
that both separatrices containing them are algebraic sets. So
if one of these points belong to the image of the curve,
then the curve is automatically contained in $\wh E_1, E_2$ or $E_3$
(since they are leaves of the foliation).

 Therefore, by Theorem \ref{monq, B} we do not have any negative
contribution in ($\star_B$) due to these two points; the only term
we have to understand is
$$\nu^T_1(f, \wh x_2)+ \nu^T_1(f, \wh y_2)\leqno(41)$$
i.e. the truncated counting function corresponding to $\wh x_2$ and
$\wh y_2$. 
 
\smallskip

\noindent $\bullet$ Let $T[f]$ be a Nevanlinna current associated to a Zariski-dense 
parabolic curve on the surface $X$. Let $\pi:\wh X\to X$ be the blow-up of $X$ at $x$; we denote by 
$\wh f$ the lift of $f$. Then there exists $T[\wh f]$ a Nevanlinna current associated to 
$\wh f$ such that 
we have
$$\pi^\star T[f]= T[\wh f]+ \rho [E]\leqno(42)$$
where $\displaystyle \rho= \int_{\wh X}T[\wh f]\wedge c_1(E)$ is a positive number, in general smaller than the 
Lelong number of $T$ at $p$. We remark that we have
$$\rho^2+ \int_{\wh X}\{T[\wh f]\}^2= \int_X\{T[f]\}^2$$
where we denote by $\{T[\wh f]\}$ the cohomology class of $T[\wh f]$. Indeed we have
$\displaystyle \int_{\wh X}\pi^\star \{T[f]\}^2= \int_{\wh X}\big(\{T[\wh f]\}+ \rho c_1(E)\big)^2
= \int_{\wh X}\{T[\wh f]\}^2- \rho^2+ 2\rho \int_{\wh X}\{T[\wh f]\}\wedge c_1(E)$, from which the above equality follows, because $\displaystyle \int_{\wh X}\{T[\wh f]\}\wedge c_1(E)= \rho$.

\smallskip

\noindent $\bullet$ If we iterate the blow-up procedure, the quantities $\rho_j$ we obtain
as in (42) verify 
$$\displaystyle \sum_j\rho_j^2\leq \int_X\{T[f]\}^2,\leqno(43)$$ that is to say, the 
preceding sum is convergent.

\smallskip

\noindent $\bullet$ Let $p\in \cF_{\rm sing}$ be a singular point of the foliation $\cF$. We blow-up
the points $x_2$ and $y_2$ and we obtain $x_3$ and $y_3$, plus two singular points at the intersection of rational curves. 
After iterating $k$ times the blow-up procedure described above, the only negative factor we
have to deal with is 
$$-\nu^T_1(f, \wh x_k)- \nu^T_1(f, \wh y_k)\leqno(44)$$
where $X_k$ is the surface obtained after iterating $k$ times the procedure described above,
$f_k$ is the induced parabolic curve and $\cF_k$ is the induced folia	tion. 
We emphasize that even if the number of singular points of the induced foliation has increased, 
the corresponding negative terms we have to take into account in ($\star_B$) remains the same, i.e. 
the algebraic intersection of the lift of $f$ with the two ``extremal" singular points. By using the notations in (44) above, the convergence of the sum (43) implies that we have
$$\sum_{k\geq 1}{\nu^{T}_1(f_k, \wh x_k)}^2+ {\nu^{T}_1(f_k, \wh y_k)}^2< \infty$$
since $\displaystyle {\nu^{T}_1(f_k, \wh x_k)}= \int_{X_k}T[f_k]\wedge c_1(E_k)$.
Hence the algebraic multiplicity term (44) tends to zero as $k\to\infty$.
\smallskip

\noindent $\bullet$ We therefore have, by using (38)
$$\int_{X_k}T[f_k]\wedge c_1(T_{\cF_k})\geq -\varepsilon_k$$
for $\varepsilon_k\to 0$ as $k\to \infty$. 
Since the singularities of $\cF$ are reduced, we use the property
$s_4$ and we have
$$\int_{X_k}T[f_k]\wedge c_1(T_{\cF_k})= \int_{X_k}T[f_k]\wedge \pi_k^\star c_1(T_{\cF})$$
and this last term is simply 
$$\int_{X}T[f]\wedge c_1(T_{\cF})$$ by the projection formula. Hence it is non-negative.
\end{proof}

\medskip

\noindent The theorem above is particularly interesting when coupled with the following very special case of a result due to
Y. Miyaoka, cf. \cite{Mi}

\begin{theorem}
Let $X$ be a projective surface, whose canonical bundle $K_X$ is big. Let $L\to X$ be a line bundle
such that $\displaystyle H^0\big(X, S^mT_X\otimes L\big)\neq 0$. Then $L$ is pseudo-effective.
\end{theorem}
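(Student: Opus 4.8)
\noindent The plan is to deduce this from Miyaoka's \emph{generic semipositivity theorem}, together with elementary facts about vector bundles on a curve and the structure of the pseudo-effective cone of a surface; once Miyaoka's theorem is granted, the remainder is pure bookkeeping. Since $K_X$ is big, $X$ is of general type, hence not uniruled; therefore, by the Miyaoka--Mori generic semipositivity theorem (cf. \cite{Mi}), if $H$ is a very ample line bundle on $X$ and $C\in|mH|$ is a general smooth member with $m\gg 0$, then the restriction $\Omega^1_X|_C$ is a nef vector bundle on the curve $C$; equivalently, every quotient line bundle of $\Omega^1_X|_C$ has non-negative degree. I would take this as the one external input.

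\smallskip
\noindent From here the argument runs as follows. Symmetric powers of nef bundles are nef, so $S^m\big(\Omega^1_X|_C\big)$ is nef on $C$, and hence its dual $S^mT_X|_C\cong\big(S^m\Omega^1_X|_C\big)^\vee$ has the property that every coherent subsheaf, in particular every sub-line-bundle, has non-positive degree. Now pick $0\neq s\in H^0(X,S^mT_X\otimes L)$. Its zero locus $Z(s)$ is a proper closed subset of $X$, so for a general $C$ as above one also has $C\not\subset Z(s)$, and therefore $s|_C$ is a non-zero section of $\big(S^mT_X|_C\big)\otimes L|_C$. Let $M\hookrightarrow\big(S^mT_X|_C\big)\otimes L|_C$ be the saturation of the image of $s|_C$; then $M$ is a line bundle containing $\cO_C$, so $\deg M\geq 0$, while $M\otimes (L|_C)^{-1}$ is a sub-line-bundle of $S^mT_X|_C$, whence $\deg M-(L\cdot C)\leq 0$. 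Combining the two inequalities, $L\cdot C\geq\deg M\geq 0$.

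\smallskip
\noindent Finally I would feed this back into the cone structure of $N^1(X)$. The class of $C$ is $[C]=mH$, so the previous step gives $L\cdot H\geq 0$ for every very ample $H$; since the very ample classes generate a dense sub-cone of the ample cone, $L\cdot\alpha\geq 0$ for every nef class $\alpha$, i.e. $L$ lies in the dual of the nef cone. On a surface the nef cone is dual to the Mori cone $\overline{NE}(X)$, which coincides with the pseudo-effective cone of divisors, so by biduality $L\in\overline{NE}(X)$; that is, $L$ is pseudo-effective. The only serious point is the first paragraph: the proof of generic semipositivity itself (via reduction to characteristic $p$ and Frobenius, or via bend-and-break), which is where all the geometry enters; everything after that is formal.
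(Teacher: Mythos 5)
Your proof is correct. Note that the paper offers no argument at all for this statement: it is quoted as a ``very special case of a result due to Y.~Miyaoka'' with a bare citation of \cite{Mi}, so there is nothing to compare step by step; what you have written is precisely the standard derivation that the citation is meant to invoke. Each link in your chain checks out: generic semipositivity of $\Omega^1_X|_C$ on a general member $C\in|mH|$ (valid since a surface with $K_X$ big is not uniruled), nefness of symmetric powers, the degree estimate $\deg M\geq 0$ and $\deg M-(L\cdot C)\leq 0$ obtained by saturating the image of $s|_C$ (both genericity conditions on $C$ --- nefness of the restriction and $C\not\subset Z(s)$ --- can indeed be met simultaneously), and the cone duality $\mathrm{Nef}(X)^\vee=\overline{NE}(X)=\overline{\mathrm{Eff}}(X)$ on a surface. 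Two cosmetic remarks: the generic semipositivity theorem is due to Miyaoka alone (the Miyaoka--Mori theorem is the uniruledness criterion, which you implicitly use only through the trivial fact that a general type surface is not uniruled); and the paper applies the statement with $L=T_{\cF}^\star$ and $m=1$, via the defining section of $T_X\otimes T_{\cF}^\star$, so your proof covers strictly more than what the paper needs.
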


We apply this result for $L= T_{\cF}^\star$, so we infer that $T^\star_{\cF}$ is pseudo-effective
(indeed, the bundle $T_X\otimes T_{\cF}^\star$ has a non-trivial section).
 
Combined with Theorem \ref{taut}, we obtain 
$$\int_XT[f]\wedge c_1(T_{\cF})=0.\leqno(45)$$

\noindent We derive the following consequence.

\begin{theorem}\label{square}
We consider the data $(X, \cF)$ and $f:\cY\to X$ as in Theorem {\rm \ref{taut}}; in addition, we assume that $K_X$ is big. Then we have
$$\int_X\{T[f]\}^2=0.\leqno(46)$$
If $R$ denotes the diffuse part of $T[f]$, then we have $\displaystyle \int_X\{R\}^2=0.$
In particular, since $T[f]$ is already nef, we infer that $\nu(R, x)= 0$ at each point $x\in X$. 
\end{theorem}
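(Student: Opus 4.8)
\emph{Plan.} The identity (45), $\int_X T[f]\wedge c_1(T_{\cF})=0$, is obtained by combining Theorem~\ref{taut} with Miyaoka's theorem quoted above; the same result of Miyaoka shows that $c_1(T_{\cF}^\star)=-c_1(T_{\cF})$ is a pseudo-effective class on $X$. Three assertions are to be proved: $\{T[f]\}^2=0$; then $\{R\}^2=0$, where $T[f]=R+Z$, $Z=\sum_i\lambda_i[C_i]$ with $\lambda_i>0$, is the Siu decomposition of $T[f]$ into its diffuse part $R$ and its divisorial part $Z$; and finally $\nu(R,x)=0$ for every $x\in X$. Since the class $\{T[f]\}$ is nef on the surface $X$ we already have $\{T[f]\}^2\geq 0$, so the real point is the opposite inequality, after which the statements on $\{R\}$ follow from fairly soft intersection-theoretic arguments.

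\emph{Step 1: $\{T[f]\}^2\leq 0$.} I would argue by contradiction. Suppose $\{T[f]\}^2>0$, so that $\{T[f]\}$ is big and nef. By (45) the pseudo-effective class $c_1(T_{\cF}^\star)$ is orthogonal to $\{T[f]\}$; passing to its Zariski decomposition $c_1(T_{\cF}^\star)=P+N$ and using that $\{T[f]\}\cdot P\geq 0$ and $\{T[f]\}\cdot N\geq 0$ add up to $0$ forces $P\equiv 0$, so $c_1(T_{\cF}^\star)\equiv\sum_i a_i[C_i]$ is a non-negative combination of curves lying in the null locus of the big nef class $\{T[f]\}$, a finite negative-definite configuration. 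If this combination is empty, then $c_1(T_{\cF})\equiv 0$, which is impossible: a reduced foliation on a surface of general type cannot have numerically trivial tangent bundle, since by the classification of foliated surfaces this would force $\kappa(X)\leq 1$. Otherwise the $C_i$ are $\cF$-invariant curves (the negative part of the Zariski decomposition of $K_{\cF}$ consists of invariant curves), the Zariski-dense leaf $f$ meets each $C_i$ only along $\cF_{\rm sing}$, and one has $\{T[f]\}\cdot K_X=\{T[f]\}\cdot c_1(N_{\cF}^\star)>0$ because $K_X$ and $\{T[f]\}$ are both big and $\{T[f]\}$ is nef. \textbf{Turning this into a contradiction is the main obstacle:} using the counting-function estimates of Theorems~\ref{monq} and~\ref{monq, B} applied to the invariant curves $C_i$, one must show that a big nef Nevanlinna current of a Zariski-dense parabolic leaf cannot be orthogonal to the whole of $c_1(T_{\cF}^\star)$ while $\{T[f]\}\cdot c_1(N_{\cF}^\star)>0$ and $K_X$ is big — that is, that the existence of a big nef Nevanlinna current is incompatible with the geometry of a reduced foliation on a surface of general type.

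\emph{Step 2: $\{R\}^2=0$ and the vanishing of the Lelong numbers.} Granting $\{T[f]\}^2=0$, write the Siu decomposition $T[f]=R+Z$ as above. Since $\{T[f]\}$ is nef, $\{R\}$ is pseudo-effective and the $C_i$ are effective, the equality $0=\{T[f]\}^2=\{T[f]\}\cdot\{R\}+\sum_i\lambda_i\,\{T[f]\}\cdot C_i$ together with the non-negativity of every summand yields $\{T[f]\}\cdot\{R\}=0$ and $\{T[f]\}\cdot C_i=0$ for all $i$. As $\{T[f]\}$ is nef with $\{T[f]\}^2=0$, its orthogonal complement in the N\'eron--Severi space of $X$ is negative semi-definite with radical $\bR\{T[f]\}$, so $\{R\}\in\{T[f]\}^{\perp}$ gives $\{R\}^2\leq 0$. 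Conversely, $R$ is diffuse, so the minimal generic Lelong number of the class $\{R\}$ along every curve vanishes; hence the negative part of the Zariski decomposition of $\{R\}$ is zero and $\{R\}$ is itself nef, so $\{R\}^2\geq 0$. Therefore $\{R\}^2=0$. Finally, if $\nu(R,x_0)=\nu_0>0$ at some point $x_0$, let $\pi:\widehat X\to X$ be the blow-up at $x_0$ with exceptional divisor $E$: then $\pi^{\star}R=R'+\nu_0[E]$ with $R'\geq 0$ closed and again diffuse, so $\{R'\}$ is nef and $0\leq\{R'\}^2=\big(\pi^{\star}\{R\}-\nu_0[E]\big)^2=\{R\}^2-\nu_0^2=-\nu_0^2$, forcing $\nu_0=0$. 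Thus all Lelong numbers of $R$ vanish, and the statement follows once Step~1 is carried out.
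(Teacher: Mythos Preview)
Your Step~1 has a genuine gap that you yourself flag: after reducing to $c_1(T_{\cF}^\star)\equiv N=\sum a_i[C_i]$ (an effective class orthogonal to the big nef class $\{T[f]\}$), you do not actually produce a contradiction. You invoke the classification of foliated surfaces to dispose of the case $N\equiv 0$, and for $N\neq 0$ you appeal to the fact that the negative part of the Zariski decomposition of $K_{\cF}$ consists of invariant curves and to $\{T[f]\}\cdot K_X>0$, but you do not explain how these two facts collide. There is no obvious numerical obstruction here: nothing prevents, on the face of it, a big nef Nevanlinna class from being orthogonal to an effective, negative-definite, $\cF$-invariant configuration while having positive degree on $K_X$. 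The counting-function estimates of Theorems~\ref{monq} and~\ref{monq, B} bound $\{T[f]\}\cdot c_1(T_\cF)$ from below, not from above, so they do not help you force $\{T[f]\}\cdot c_1(T_\cF)<0$ either. In short, the algebraic route you sketch is not closed off, and completing it seems to require substantial extra input (essentially the full McQuillan--Brunella machinery you are trying to re-derive).

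The paper avoids this entirely by a differential-geometric argument. Assuming $\{T[f]\}^2>0$, one approximates $\{T[f]\}$ by K\"ahler classes $\alpha_k$ and uses Yau's theorem to choose representatives $\omega_k\in\alpha_k$ solving the Monge--Amp\`ere equation $\omega_k^2=\lambda_k\,\omega^2$, where $\omega$ is a fixed metric with $\Ricci_\omega\leq -\varepsilon_0\,\omega$ (this is where $K_X$ big/ample enters). The key is a Bochner-type inequality applied to the section $V\in H^0(X,T_X\otimes T_\cF^\star)$ defining the foliation:
\[
\sqrt{-1}\,\ddbar\log|V|_k^2\ \geq\ \Theta_h(T_\cF)-\frac{\langle \Theta_{\omega_k}(T_X)V,V\rangle}{|V|_k^2}.
\]
Wedging with $\omega_k$ and integrating gives
\[
\int_X \alpha_k\wedge c_1(T_\cF)\ \leq\ \int_X\frac{\Ricci_{\omega_k}(V,\ol V)}{|V|^2_{\omega_k}}\,\omega_k^2
=\lambda_k\int_X\frac{\Ricci_{\omega}(V,\ol V)}{|V|^2_{\omega_k}}\,\omega^2,
\]
the last equality because $\omega_k^2=\lambda_k\,\omega^2$ forces $\Ricci_{\omega_k}=\Ricci_\omega$. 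Since $\Ricci_\omega$ is negative definite and $\lambda_k$ stays bounded away from $0$ (precisely because $\{T[f]\}^2>0$), a uniform trace bound on a set of large $\omega$-volume yields $\int_X\alpha_k\wedge c_1(T_\cF)\leq -\delta<0$ for all $k$, contradicting (45) in the limit. This is the missing idea: the contradiction is analytic, not intersection-theoretic.

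Your Step~2 is correct and close in spirit to the paper, though the paper is terser: it observes that $R=\lim R_\varepsilon$ with $R_\varepsilon$ closed, positive and smooth outside finitely many points, so $\{R\}$ is nef and $\{R\}^2\geq 0$; combined with $\{T[f]\}^2\geq\{R\}^2$ (which follows from nefness of both $\{T[f]\}$ and $\{R\}$) one gets $\{R\}^2=0$, and then a local potential argument gives $\nu(R,x)=0$. Your blow-up computation for the Lelong numbers is a clean alternative to that last step.
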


\begin{proof}
If the equality (46) does not hold, we remark that  
the class $\{T[f]\}$ contains a K\"ahler current, i.e. there exists $S\in \{T[f]\}$ such that $S\geq \delta\omega$
for some positive constant $\delta$. Unfortunately we cannot use this representative directly in order to conclude that
the intersection number $\displaystyle \int_XT[f]\wedge c_1(T_{\cF})$ is strictly negative (and obtain in this way a contradiction), because of the possible singularities of $S$ and of the positively defined representatives of $c_1(T_{\cF})$. 

We will proceed therefore in a different manner, and
show next that if the conclusion of our theorem does not hold, then we have
$\displaystyle \int_XT[f]\wedge c_1(T_{\cF})<0$, contradicting the relation (45). 

\noindent We will only discuss here the case $K_X$ ample; the general case (i.e. $K_X$ big) is obtained 
in a similar manner. 

Let $\omega$ be a metric on $X$, such that $\Ricci_\omega\leq -\varepsilon_0\omega$.
We have a sequence of K\"ahler classes $(\alpha_k)_{k\geq 1}$ 
whose limit is $\{T[f]\}$.
\smallskip

\noindent We recall the following particular case of Yau's theorem \cite{Yau}
\begin{theorem}\label{yau}\cite{Yau}
  Let $\gamma$ be a K\"ahler form on a compact complex surface $X$. Let $f$
  be a smooth function on $X$ such that
  $$\int_X\gamma^2= \int_X e^f\gamma^2.$$
Then there exists a K\"ahler form $\omega_1\in \{\alpha\}$ such that the equality
$$\omega_1^2= e^f\gamma^2$$
holds at each point of $X$.
\end{theorem}

\noindent This statement can be reformulated as follows.
Let
  $\alpha$ be a K\"ahler class on a compact complex surface $X$. Let $\omega$ be a K\"ahler 
form on $X$ such that 
$$\int_X\alpha^2= \int_X \omega^2.$$
Then there exists a K\"ahler form $\omega_1\in \alpha$ such that the equality
$$\omega_1^2= \omega^2$$
holds at each point of $X$. Indeed, we take an arbitrary K\"ahler metric
$\gamma$ in the class $\alpha$. There exists a function $f$ such that
$\omega^2= e^f\gamma^2$, and Theorem \ref{yau} shows our claim.

\noindent We infer that there exists a sequence of K\"ahler metrics $(\omega_k)_{k\geq 1}$, such that $\omega_k\in \alpha_k$
for each $k\geq 1$, and such that we have
$$\omega_k^2= \lambda_k\omega^2.$$ 
The sequence of normalizing constants 
$\lambda_k$ is bounded from above, and also from below \emph{away from zero}, 
by our assumption $\displaystyle \int_X\{T[f]\}^2>0$. 

We recall now a basic fact from complex differential geometry.
Let $(E, h)$ be a Hermitian vector bundle, and let $\xi$ be a holomorphic section of 
$E$. Then we have
$$\sqrt{-1}\ddbar\log \vert \xi \vert^2_h\geq -\frac{\langle\Theta_h(E)\xi, \xi\rangle}{|\xi|_h^2}$$
where the curvature form $\Theta_h(E)$ corresponds to the Chern connection of $(E, h)$.
We detail here the computation showing the previous inequality.
$$\dbar \log \vert \xi \vert^2_h= \frac{\langle\xi,  D^\prime\xi\rangle}{|\xi|_h^2}$$ 
where $D^\prime$ is the (1,0) part of the Chern connection.
It follows that 
$$\ddbar\log \vert \xi \vert^2_h= \frac{\langle D^\prime\xi, D^\prime \xi\rangle}{|\xi|_h^2}-
\frac{\langle D^\prime\xi, \xi\rangle\wedge \langle \xi, D^\prime\xi\rangle}{|\xi|_h^4}+
\frac{\langle \dbar D^\prime\xi, \xi\rangle}{|\xi|_h^2}.$$
By Legendre inequality we know that 
$$\displaystyle \sqrt{-1}\frac{\langle D^\prime\xi, D^\prime \xi\rangle}{|\xi|_h^2}-
\sqrt{-1}\frac{\langle D^\prime\xi, \xi\rangle\wedge \langle \xi, D^\prime\xi\rangle}{|\xi|_h^4}\geq 0.
$$ As for the remaining term, we have the equality
$$\dbar D^\prime \xi= -\Theta_h(E)\xi$$
so the inequality above is established.

We apply this for $V$ the section of $T_X\otimes T_{\cF}^\star$ which defined the foliation. The formula above gives
$$\sqrt{-1}\ddbar\log \vert V\vert^2_k\geq \Theta_h(T_{\cF})- 
\frac{\langle \Theta_{\omega_k}(T_X)V, V\rangle}{|V|_k^2}$$
where we denote by $|V|_k$ the norm of $V$ measured with respect to
$\omega_k$ and an arbitrary metric $h$ on $T_{\cF}$.

By considering the wedge product with $\omega_k$ and integrating over $X$ the above inequality we obtain
$$\int_X\omega_k\wedge c_1(T_{\cF})\leq \int_X\frac{\Ricci_{\omega_k}(V, \ol V)}{|V|_{\omega_k}^2}\omega_k^2.$$
We remark that the expression 
$\displaystyle \frac{\Ricci_{\omega_k}(V, \ol V)}{|V|_{\omega_k}^2}$ under the integral sign is well-defined, even if $V$ is only a vector field with values in $T_{\cF}^\star$. Given the Monge-Amp\`ere equation satisfied 
by $\omega_k$ we have
$$\int_X\frac{\Ricci_{\omega_k}(V, \ol V)}{|V|_{\omega_k}^2}\omega_k^2= 
\lambda_k\int_X\frac{\Ricci_{\omega}(V, \ol V)}{|V|_{\omega_k}^2}\omega^2.$$
Since $\Ricci_\omega$ is negative definite, we have
$$\int_X\frac{\Ricci_{\omega}(V, \ol V)}{|V|_{\omega_k}^2}\omega^2\leq
\int_{U_k}\frac{\Ricci_{\omega}(V, \ol V)}{|V|_{\omega_k}^2}\omega^2$$
for any open set $U_k\subset X$.

We have $\displaystyle \int_X\omega_k\wedge \omega\leq C$
for some constant $C$ independent of $k$. So there exists an open set $U_k$
of large volume (with respect to $\omega$), on which the trace of 
$\omega_k$ with respect to $\omega$ is bounded uniformly with respect to $k$, i.e. there exists a constant $C_1$
such that 
$$\omega_k|_{U_k}\leq C_1\omega|_{U_k}$$
for any $k\geq 1$.
With this choice of $U_k$, we will have
$$\int_{X}\alpha_k\wedge c_1(T_{\cF})\leq \int_{U_k}\frac{\Ricci_{\omega}(V, \ol V)}{|V|_{\omega_k}^2}\omega^2\leq C_2
\int_{U_k}\frac{\Ricci_{\omega}(V, \ol V)}{|V|_{\omega}^2}\omega^2< -\delta$$
for some strictly positive quantity $\delta$, and the first part of Theorem \ref{square} is established 
by taking $k\to \infty$.
\smallskip

We write next the Siu decomposition of $T[f]$ 
$$T[f]= \sum_j\tau_{j}[C_j]+ R\leqno(46)$$
and we remark that the class of the current $R$ is nef. Indeed, using Demailly approximation theorem we have $R= \lim R_\varepsilon$, where for each $\varepsilon> 0$ the current 
$R_\varepsilon$ is closed, positive, and non-singular 
in the complement of a finite set of points. Hence $R_\varepsilon^2$ is well-defined and positive as a current; in particular, $\int_X\{R\}^2\geq 0$. 
Then we have $\displaystyle  \int_X\{T[f]\}^2\geq \int_X\{R\}^2$, and it follows that 
$\int_X\{R\}^2=0$.
Thus using a local computation with potential of the current $R$ we infer that 
$$\nu(R, x)= 0\leqno(47)$$
for any $x\in X$, i.e. the Lelong number of $R$ at each point of $X$ is equal to zero.
\end{proof}

\medskip

\subsubsection{Intersection with the normal bundle, {\rm I}}

Our aim in this part is to evaluate the degree of the curve 
$f$ on the normal bundle of the foliation $\cF$. 
To this end we follow
 \cite{Brun1}, \cite{McQ} up to a certain point; their approach 
relies on the Baum-Bott formula \cite{BB}, which is what 
we will survey next. 
\smallskip

\noindent Actually, we will first give the precise expression of a representative of the Chern class
$$c_1\big(N_{\cF, B}\big)\leqno(48)$$
in arbitrary dimension, 
where $B=B_1+\dots+ B_N$ is a divisor on $X$, such that $(X, B)$ verifies the conditions (a)
and (b) in subsection 6.2. The vector bundle in (48) is defined by the exact sequence

$$0\to T_{\cF}\to T_X\langle B\rangle\to N_{\cF, B}\to 0.\leqno (49)$$
We remark that in the case of surfaces, this bundle equals 
$N_{\cF}\otimes \cO(-B)$. 

Let $U_\alpha\subset X$ be an open coordinate set, and let ($z_{1}, \dots, z_{n}$) be a coordinate system on $U_\alpha$. 
We assume that 
$$\Supp(B)\cap U_\alpha= (z_{1}\dots z_{p}=0).$$
Since $B$ is invariant by $\cF$, the vector field giving the local trivialization of $T_{\cF}$
is written as
$$v_\alpha= \sum_{j=1}^pz_jF_{j\alpha}(z)\frac{\partial}{\partial z_j}+ \sum_{i=p+ 1}^n
F_{i\alpha}(z)\frac{\partial}{\partial z_i}\leqno(50)$$
where $F_{j\alpha}$ are holomorphic functions defined on $U_\alpha$.

With respect to the coordinates $(z)$ on $U_\alpha$, the canonical bundle associated to 
$(X, B)$ is locally trivialized by
$$\Omega_\alpha:= \frac{dz_1}{z_1}\wedge \cdots \wedge \frac{dz_p}{z_p}
\wedge dz_{p+1}\wedge \dots\wedge dz_n,\leqno(51)$$
and therefore we obtain a local trivialization of the bundle $\det \big(N_{\cF, B}^\star\big)$ 
by contracting the form (51) with the vector field (50), i.e.
$$\omega_\alpha:= \Lambda_{v_\alpha}(\Omega_\alpha),\leqno(52)$$
that is to say

\begin{equation}
\begin{split}
\omega_\alpha= & \sum_{j=1}^p(-1)^{j+1}F_{j\alpha}(z)\frac{dz_1}{z_1}\wedge \cdots \widehat{\frac{dz_j}{z_j}}\wedge \cdots \wedge \frac{dz_p}{z_p}
\wedge dz_{p+1}\wedge \dots\wedge dz_n \\
+ & \sum_{i=p+1}^n(-1)^{i+1}F_{i\alpha}(z)\frac{dz_1}{z_1}\wedge \cdots 
\wedge \frac{dz_p}{z_p}
\wedge dz_{p+1}\wedge \cdots \wedge\widehat{{dz_i}}\wedge \cdots dz_n
\nonumber
\end{split} 
\end{equation}
We define the differential 1-form $\xi_{\alpha}$ on $U_\alpha$
by the formula
$$\xi_{\alpha, \varepsilon}:= \frac{F^{(1)}_{\alpha}}{\Vert G_\alpha\Vert^2}\sum_j\ol G_{j\alpha}dz_j\leqno(53) $$
where we have used the following notations. If the index $j$ is smaller than $p$, 
then $G_{j\alpha}:= z_j F_{j\alpha}$, and if $j> p$, then we define $G_{j\alpha}:= F_{j\alpha}$. Also, we write
$$F^{(1)}_{\alpha}(z):= \sum_{j=1}^pz_jF_{j\alpha, z_j}(z)+ \sum_{i=p+1}^nF_{i\alpha, z_i}(z)$$
where $\displaystyle F_{k\alpha, z_r}$ is the partial derivative of $F_{k\alpha}$ with respect to $z_r$, and
$$\Vert G_{\alpha}\Vert^2:= \sum_i|G_{i\alpha}|^2.$$
Then we have the equality
$$d\omega_\alpha= \xi_{\alpha}\wedge \omega_\alpha\leqno(54)$$
as one can check by a direct computation.

Let $\alpha, \beta$ be two indexes such that $U_\alpha\cap U_\beta\neq \emptyset$. Then we have
$$\omega_\alpha= g_{\alpha\beta}\omega_{\beta}\leqno(55)$$
on $U_\alpha\cap U_\beta$; here $g_{\alpha\beta}$ are the transition functions for the determinant bundle of $N_{\cF}^\star(B)$. By differentiating the relation (55), we obtain
$$d\omega_\alpha= d\log g_{\alpha\beta}\wedge \omega_\alpha+ g_{\alpha\beta}d\omega_{\beta}\leqno(56)$$
hence from the relation (54) we get
$$\big(\xi_{\alpha}- \xi_{\beta}- d \log g_{\alpha\beta}\big)\wedge \omega_\alpha= 0\leqno(57)$$
on $U_\alpha\cap U_\beta$.
\medskip

\noindent For the rest of this section, we assume that the singularities of $\cF$ are \emph{isolated and 
non-degenerate}.

Then we can find a family of non-singular 
1-forms $\gamma_{\alpha }$ so that we have
$$\xi_{\alpha}- \xi_{\beta}- d \log g_{\alpha\beta}= 
\gamma_{\alpha}- \gamma_{\beta};\leqno(58)$$
for example, we can take 
$$\gamma_{\alpha}:= \sum_\beta \rho_\beta \big(\xi_{\alpha}- \xi_{\beta}- d \log g_{\alpha\beta}\big),\leqno(59)$$
where $(\rho_\alpha)$ is a partition of unit subordinate to the covering $(U_\alpha)$. Here we assume that each 
$U_\alpha$ contains at most one singular point of $\cF$. 

\noindent Then the global 2-form whose restriction to $U_\alpha$ is
$$\Theta|_{U\alpha}:= \frac{1}{2\pi \sqrt{-1}}d(\xi_{\alpha}- \gamma_{\alpha})\leqno(60)$$
represents the first Chern class of $N_{\cF, B}$.
\smallskip

\noindent Let $T$ be a closed positive current of type $(n-1, n-1)$ on $X$; we assume that $T$ is 
\emph{diffuse and directed by the foliation} $\cF$. This implies that for each index $\alpha$
there exists a positive measure $\tau_\alpha$ on $U_\alpha$ such that 
$$T|_{U_\alpha}= \tau_\alpha\omega_\alpha\wedge \overline\omega_\alpha.\leqno(61)$$
We intend to use the representative (60) in order to evaluate the degree of the current $T$ on the 
normal bundle of the foliation. Prior to this, we have to regularize the forms $\xi_{\alpha}$. 
This is done simply by replacing $\xi_{\alpha}$ with $h_\alpha\xi_{\alpha}$, where 
$h_\alpha$ is equal to 0 in a open set containing the singularity of $\cF$, and it is equal to one 
out of a slightly bigger open set. A specific choice of such functions will be made shortly; we remark that the effect of the multiplication with $h_\alpha$ is that the equality (54) is only verified 
in the complement of an open set containing the singular point.
The formula for the curvature becomes $\displaystyle \Theta|_{U\alpha}:= \frac{1}{2\pi \sqrt{-1}}d(h_\alpha\xi_{\alpha}- \gamma_{\alpha})$.
\medskip

\noindent As a consequence, we have the next statement, which is the main result of this subsection.

\begin{lemma}
Let $\cF$ be a foliation by curves with isolated singularities on $X$. We consider a closed positive current $T$ of bidimension (1,1), which is directed by the foliation $\cF$. If 
$\displaystyle \cF_{\rm sing}\cap \Supp(T)$ consists in non-degenerate points only, then we have
$$\int_{X}T\wedge \Theta\geq -C\sup_{x\in \cF_{\rm sing}}\nu(T, x)\leqno(62)$$
for some positive constant $C$.
\end{lemma}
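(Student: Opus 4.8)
The plan is to carry over the localization argument behind Brunella's index theorem \cite{Brun1} (see also \cite{McQ} and the Baum--Bott formula \cite{BB}). Since $T$ is closed, $\int_X T\wedge\Theta$ is the topological number $T\cdot N_\cF$, and the role of the explicit smooth representative $\Theta$ built from the $1$-forms $\xi_\alpha$ of $(53)$--$(54)$ is that it exhibits this number as a sum of residue contributions at the finitely many points $x_1,\dots,x_N$ of $\cF_{\rm sing}\cap\Supp(T)$, which are isolated and, by hypothesis, non-degenerate. Fix pairwise disjoint coordinate balls $B_i=\bB(x_i,\varepsilon)$ with $B_i\subset U_{\alpha(i)}$ put in the normal form $(s_1)$, and choose the partition of unity $(\rho_\alpha)$ and the cut-offs $(h_\alpha)$ so that $\rho_{\alpha(i)}\equiv 1$ and $h_{\alpha(i)}\equiv 0$ on a neighbourhood of $x_i$ (hence $\gamma_{\alpha(i)}\equiv 0$ there too) while $h_{\alpha(i)}\equiv 1$ near $\partial B_i$; then $\Theta|_{B_i}=\frac{1}{2\pi\sqrt{-1}}d(h_{\alpha(i)}\xi_{\alpha(i)})$ is a genuine smooth form. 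Split $\int_X T\wedge\Theta=\int_{X\setminus\bigcup_i B_i}T\wedge\Theta+\sum_i\int_{B_i}T\wedge\Theta$ and let $\varepsilon\to 0$.

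On $X\setminus\bigcup_i B_i$ all cut-offs equal $1$, so there $2\pi\sqrt{-1}\,\Theta$ admits the local primitives $\xi_\alpha-\gamma_\alpha$, which patch on overlaps up to the closed forms $d\log g_{\alpha\beta}$ (relation $(58)$). Using $dT=0$ one has locally $T\wedge\Theta=\frac{1}{2\pi\sqrt{-1}}d\big(T\wedge(\xi_\alpha-\gamma_\alpha)\big)$, and the patching discrepancies $T\wedge d\log g_{\alpha\beta}=d\big(T\wedge\log g_{\alpha\beta}\big)$ are themselves exact (the $g_{\alpha\beta}$ being units), so a Stokes argument collapses the bulk integral onto the boundary spheres: $$\int_{X\setminus\bigcup_i B_i}T\wedge\Theta=-\sum_i\frac{1}{2\pi\sqrt{-1}}\int_{\partial B_i}\xi_{\alpha(i)}\wedge T+o(1)\qquad(\varepsilon\to 0).$$ For the balls, $h_{\alpha(i)}\xi_{\alpha(i)}$ is smooth across $x_i$ and $\|\Theta\|_{C^0(B_i)}=O(\varepsilon^{-2})$, while the Lelong-number asymptotics give $M\big(T,\bB(x_i,\varepsilon)\big)=\nu(T,x_i)\,\varepsilon^{2}+o(\varepsilon^{2})$; hence $\big|\int_{B_i}T\wedge\Theta\big|=O\big(\nu(T,x_i)\big)+o(1)$. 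So everything reduces to bounding each boundary residue $\displaystyle\lim_{\varepsilon\to 0}\frac{-1}{2\pi\sqrt{-1}}\int_{\partial B_i}\xi_{\alpha(i)}\wedge T$ from below by $-C\,\nu(T,x_i)$.

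This last step is the crux. In the normal form $(s_1)$ the $1$-form dual to $\cF$ near $x_i$ equals, up to a non-vanishing factor, $\lambda_1 z_2\,dz_1-\lambda_2 z_1\,dz_2$ plus higher order terms with $\lambda_1/\lambda_2\notin\Q_{>0}$; consequently $\xi_{\alpha(i)}$ has at worst logarithmic poles along the two separatrices $\{z_1=0\},\{z_2=0\}$ through $x_i$, with residues bounded solely in terms of the eigenvalue ratio — hence, by the classification $(s_1)$, uniformly over $\cF_{\rm sing}$. Since $T$ is a closed positive current directed by $\cF$, the local structure theory of directed currents (cf. \cite{Brun1}) lets one write $T$ near $x_i$ as a part carried by the two separatrices, with coefficients $\le\nu(T,x_i)$, plus a diffuse part $R$ of mass $\nu(R,x_i)\varepsilon^{2}+o(\varepsilon^{2})$ in $\bB(x_i,\varepsilon)$; pairing $\xi_{\alpha(i)}$ against the separatrix part produces the Camacho--Sad/Baum--Bott residues at $x_i$, a term $\ge -C\,\nu(T,x_i)$, and against $R$ it contributes $O(\varepsilon^{-2})\,M\big(R,\bB(x_i,\varepsilon)\big)=O(\nu(T,x_i))+o(1)$. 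Summing over the finitely many $x_i$ and enlarging $C$ yields $$\int_X T\wedge\Theta\ge -C\sup_{x\in\cF_{\rm sing}}\nu(T,x).$$ I expect the only genuinely delicate point to be this local residue analysis — the decomposition of a directed closed positive current near a non-degenerate singularity, and the sharp dependence of its Baum--Bott residue on the Lelong number — which is precisely the computation carried out in \cite{Brun1}, and which is insensitive to whether $T$ comes from a parabolic Riemann surface or from $\C$: the parabolic setting enters only through the construction of $T$, not through this lemma.
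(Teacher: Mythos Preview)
Your approach is a Baum--Bott style localization via Stokes on shrinking balls; the paper's proof is quite different, and considerably more direct. Two key algebraic identities carried by the \emph{directedness} hypothesis do all the work there, and you do not invoke either of them.

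First, since $T|_{U_\alpha}=\tau_\alpha\,\omega_\alpha\wedge\overline\omega_\alpha$ with $\omega_\alpha$ holomorphic and $\gamma_\alpha$ of type $(1,0)$, one has $d\gamma_\alpha\wedge T=\tau_\alpha\,\overline\partial(\gamma_\alpha\wedge\omega_\alpha)\wedge\overline\omega_\alpha=0$ by $(57)$--$(59)$. So the $\gamma$-part of $\Theta$ contributes nothing; there is no need to patch primitives on overlaps or run a \v{C}ech--Stokes argument. Second, from $d\omega_\alpha=\xi_\alpha\wedge\omega_\alpha$ one gets $d\xi_\alpha\wedge\omega_\alpha=0$, hence $d(h_\alpha\xi_\alpha)\wedge\omega_\alpha=\overline\partial h_\alpha\wedge\xi_\alpha\wedge\omega_\alpha$: only the derivative of the cutoff survives. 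The paper then takes the one-parameter family $h_r(z)=\chi(\Vert z\Vert^r)$ with $r\to 0$ (so $h_r\to{\bf 1}_{U\setminus\{0\}}$ pointwise --- this is \emph{not} a spatial shrinking), uses the non-degeneracy $\Vert G\Vert\asymp\Vert z\Vert$ to reduce to bounding
\[
r\int_{C_1<\Vert z\Vert^r<C_2}\Vert z\Vert^{\,r-2}\,\sigma_T,
\]
and estimates this by the layer-cake formula together with $\sigma_T(\Vert z\Vert<\tau)\le 2\nu\tau^2$. No slicing, no boundary integrals, no decomposition of $T$.

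Your argument, by contrast, has real gaps. The description of $\xi_\alpha$ as having ``at worst logarithmic poles along the two separatrices'' is not what $(53)$ gives: at a non-degenerate point $\xi_\alpha$ behaves like $\Vert z\Vert^{-1}$ near the origin, not like $dz_j/z_j$. More seriously, the ``local structure theory of directed currents'' you appeal to --- a decomposition of $T$ near $x_i$ into pieces carried by the separatrices (with mass $\le\nu(T,x_i)$) plus a diffuse remainder with controlled trace measure --- is not a standard fact and is not in \cite{Brun1}; Brunella's index computation concerns algebraic leaves, not arbitrary closed directed currents. Without this decomposition your bound on the boundary residue $\int_{\partial B_i}\xi\wedge T$ is unproven. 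Finally, invoking ``Camacho--Sad/Baum--Bott residues'' here conflates several distinct index formulas; the quantity at stake is $T\cdot c_1(N_{\cF,B})$, which is none of them. The paper's route avoids all of these issues by never leaving the level of $(n,n)$-forms on $X$.
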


\begin{proof}
We first observe that we have 
$$d\gamma_{\alpha }\wedge T= 0.\leqno(63)$$
Indeed, $\omega_\alpha$ is holomorphic and $\gamma_\alpha$
is a (1,0) form, so we have
$$d\gamma_{\alpha}\wedge T= \tau_\alpha
\overline\partial(\gamma_{\alpha}\wedge \omega_\alpha)\wedge \overline \omega_\alpha.\leqno(64)$$
The equality (63) is therefore a consequence of (57)-(59).

In order to compute the other term of (62), we use the relation (54), and we infer
that we have
$$\overline\partial (h_\alpha \xi_{\alpha})\wedge \omega_\alpha= 
\overline\partial h_\alpha\wedge\xi_{\alpha}\wedge \omega_\alpha\leqno(65)$$
for any choice of $h_\alpha$.

In what follows, we will drop the index $\alpha$, and concentrate around a single singular point
namely the origin of the coordinate system.
Let $\chi$ be a smooth function vanishing near zero, and which equals 1 for $x\geq 1/2$. 
We consider 
$$h_r(z):= \chi\big(\Vert z\Vert^r\big)\leqno(66)$$
for each $r> 0$. We have then $0\leq h_r\leq 1$, and as $r\to 0$, $h_r$ converges to the 
the characteristic function of $U\setminus 0$.

\noindent If moreover the singular point $z= 0$ is non-degenerate, in the sense that we have
$$C^{-1}< \frac{\Vert G(z)\Vert}{\Vert z\Vert}< C$$
for some constant $C> 0$, then we have to bound from above the mass of the measure 
$$r\frac{\chi^\prime\big(\Vert z\Vert^r\big)}{\Vert z\Vert^{2-r}}\sigma_T$$ 
as $r\to 0$; here $\sigma_T$ is the trace measure of $T$.
In other words we have to obtain an upper bound for the integral
$$r\int_{C_1< \Vert z\Vert^r< C_2}\Vert z\Vert^{r-2}\sigma_T.\leqno(67)$$
We observe that we have
$$\int_{C_1< \Vert z\Vert^r< C_2}\!\!\!\Vert z\Vert^{r-2}\sigma_T= \int_0^\infty\sigma_T
\Big(\big(\Vert z\Vert^{r-2}> s\big)\cap (C_1< \Vert z\Vert^r< C_2)\Big)ds;
$$
and up to a $\cO(r)$ term, the quantity we have to evaluate is smaller than
$$r\int_1^{C_1^{-\frac{2-r}{r}}}\sigma_T\big(\Vert z\Vert< s^{\frac{1}{r-2}}\big)ds.\leqno(68)$$
If $\nu$ is  
Lelong number of $T$ at $0$, then we have $\sigma_T\big(\Vert z\Vert< \tau\big)\leq 2\nu\tau^2$,
as soon as $\tau$ is small enough.
Therefore, a quick computation shows that the integral (68) is bounded by $C\nu$, for some positive constant $C> 0$.
\end{proof}
\smallskip

As one can see from the proof, this kind of arguments can only be used if
the singular point is non-degenerate. However, this is sufficient for our purposes,
given the next result.

\smallskip

\begin{theorem}\label{supp}
Let $X$ be a projective surface endowed with a holomorphic foliation $\cF$.
We assume that the singularities of $\cF$ are reduced. 
Let $R$ be a diffuse closed positive (1,1) current on $X$, directed by $\cF$. 
Let $\{x_1,\dots x_d\}$ be the set of degenerate singularities of $\cF$. Then we have
$$\displaystyle \Supp(R)\subset X\setminus \{x_1,\dots x_d\}.$$
\end{theorem}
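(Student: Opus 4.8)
The plan is to localize at a degenerate singular point $x_0$ of $\cF$ and to exploit the dynamics of the holonomy of its strong separatrix, following \cite{McQ}, \cite{Brun1}. It suffices to prove that if $x_0$ is a degenerate (saddle-node) singularity of $\cF$, then $R$ vanishes identically on a neighbourhood of $x_0$; so assume, for a contradiction, that $x_0\in\Supp(R)$. By the classification recalled in $(s_2)$, we may pick local coordinates $(z,w)$ on a small polydisk $\Delta\subset X$ around $x_0$ in which $\cF$ is given by $v=\big(z(1+\tau w^k)+wF(z,w)\big)\partial_z+w^{k+1}\partial_w$, with $F$ vanishing to order $k$; then the strong separatrix is $S=\{w=0\}$ and $\cF$ is non-singular on $\Delta\setminus S$. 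Since $R$ is diffuse it carries no mass on the analytic curve $S$, nor on $\{x_0\}$; hence $R=R|_{\Delta\setminus S}$ is a closed positive $(1,1)$-current directed by the \emph{non-singular} foliation $\cF|_{\Delta\setminus S}$.

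By the structure theory of foliation cycles (see \cite{Brun1} and the references therein), after shrinking $\Delta$ and fixing a holomorphic transversal $\Sigma=\{z=c\}\cap\Delta$ (with $c\neq0$; transversality is clear from the form of $v$) which is complete for $\cF|_{\Delta\setminus S}$, the current $R$ can be written as an average $R=\int_\Sigma [L_s]\,d\mu(s)$ of integration currents over the leaves meeting $\Sigma$, against a locally finite positive measure $\mu$ on $\Sigma$; closedness of $R$ is equivalent to $\mu$ being invariant under the holonomy pseudogroup of $\cF|_{\Delta\setminus S}$ acting on $\Sigma$. The base point $0\in\Sigma$ corresponds to the leaf $S\setminus\{x_0\}$, and $\mu(\{0\})=0$ because $R$ is diffuse. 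One also checks, from the explicit shape of the leaves of a saddle-node, that after shrinking $\Delta$ every leaf of $\cF|_{\Delta\setminus S}$ accumulating at $x_0$ meets $\Sigma$ arbitrarily close to $0$; hence it is enough to show that $\mu$ vanishes on a punctured neighbourhood of $0$ in $\Sigma$.

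This is the ``elementary fact from dynamics''. Integrating $d\log z=dw/w^{k+1}+\cdots$ along the leaves shows that the holonomy of $S$ is the germ $h(w)=w+2\pi i\,w^{k+1}+O(w^{k+2})$, i.e.\ an analytic parabolic germ tangent to the identity with non-trivial $(k+1)$-jet (this is part of the Martinet--Ramis description of saddle-nodes). By the Leau--Fatou flower theorem a punctured neighbourhood of $0$ in $\Sigma$ is covered by $k$ attracting and $k$ repelling petals, which can be chosen so that $h(\overline P)\subset P\cup\{0\}$ on an attracting petal $P$ (and symmetrically for $h^{-1}$ on a repelling petal). Then $\bigcap_{n\ge0}h^{n}(P)=\emptyset$, so $P=\bigsqcup_{n\ge0}\big(h^{n}(P)\setminus h^{n+1}(P)\big)$, and by $h$-invariance of $\mu$ each piece has the same mass $\mu(P\setminus h(P))$; since $\mu(P)<\infty$ this common value is $0$, whence $\mu(P)=0$. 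The same argument on the repelling petals (using $h^{-1}$) gives $\mu=0$ on all petals, hence, together with $\mu(\{0\})=0$, on a full neighbourhood of $0$ in $\Sigma$. Consequently $R$ puts no mass near $x_0$, contradicting $x_0\in\Supp(R)$; this proves the theorem.

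The main obstacle is the middle paragraph: identifying $R|_{\Delta\setminus S}$ with a holonomy-invariant transverse measure, and, above all, checking that a single holomorphic transversal $\Sigma$ can be chosen complete for $\cF|_{\Delta\setminus S}$ and that \emph{all} leaves clustering at $x_0$ are recorded arbitrarily near $0\in\Sigma$ — this requires controlling the (a priori wild) behaviour of the leaves of a saddle-node inside $\Delta$. Once this is in place, the dynamical step is genuinely elementary: a parabolic germ admits no non-zero finite invariant measure apart from multiples of the Dirac mass at its fixed point, and that mass is excluded by diffuseness of $R$.
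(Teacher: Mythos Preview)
Your argument is correct and follows essentially the same route as the paper's proof: localize at a degenerate (saddle--node) singularity, use that the holonomy of the strong separatrix is a parabolic germ tangent to the identity, invoke the L\'eau flower theorem to cover a punctured transversal neighbourhood by attracting and repelling petals, and then exploit holonomy--invariance of the transverse measure to conclude that no finite invariant measure can charge the petals. The paper phrases the last step as producing infinitely many disjoint iterates $W, h^{n_1}(W), h^{n_2}(W),\dots$ of a fixed set of equal positive mass, whereas you write the telescoping decomposition $P=\bigsqcup_{n\ge 0}\big(h^n(P)\setminus h^{n+1}(P)\big)$; these are two packagings of the same contradiction.
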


\begin{proof} Suppose $(0,0)$ is a degenerate reduced singularity of $\cF$. It is well-known that
the holonomy map $h$ at such a point is tangent to identity. We assume that the separatrix is given by the 
$z$-axis, and that the 
$w$-axis is transversal to $\cF$. 

The argument is based on the fact that 
the dynamics of such a map near the origin is well-understood.
It is the L\'eau ``flower theorem''. There is a neighborhood of zero, say $U$,
such that 
$$U\setminus 0=  \bigcup_{1\leq j\leq p}P_{j}^+\cup P_{j}^-.$$
If the point $z$ belongs to the petals $P_{j}^+$, then
we have $h^{n}(z)\to 0$, where we denote by $h^n$ the $n^{\rm th}$ iteration of
$h$. If  $z$ belongs to the petals $P_{j}^-$, then
we have $h^{-n}(z)\to 0$. 

Suppose that the current $R$ 
has mass in a small open $\Xi$ set near 0, which can be assumed to have the form
$\Xi= P_{1}^+\times \Delta$, where $\Delta$ is a disk.
Let $x_{0}:= (0, w_{0})$ be a point in $U\cap\Supp(R)$, such that
$w_{0}\neq 0$. 

The current $R$ can be written on $\Xi$ as 
$$R= \int_{P_{1}^+}[V_{w}]d\mu(w),$$
where $V_{w}$ is the plaque through $w$.

We show now that $\mu$ has no mass out of the origin.
Let $W\subset P_{1}^+$ be an open set containing $w_{0}$, such that 
$W\cap (w=0)= \emptyset$. We construct a new open set 
$W_{1}\subset P_{1}^+$ via the holonomy map, as follows. 
Without loss of generality we can assume that the 
petal $P_{1}^+$ is invariant by $h$. We define $W_{1}:= h^{n_{1}}(W)$
where $n_1$ is large enough in order to have 
$W_{1}\cap W= \emptyset$.

We can restart this procedure with $W_{1}$, obtaining $W_{2}, \dots, W_{k}$. In conclusion, 
we get a sequence of open sets
$(W_{k})\subset P_{1}^+$ such that $W_{i}\cap W_{j}= \emptyset$ if $i\neq j$. The mass
of each set with respect to the transverse measure is preserved,
since the measure is invariant by the holonomy map. This is equivalent to the fact that 
the current $R$ is closed and directed by $\cF$.
Therefore we obtain a contradiction.
\end{proof}

\smallskip

\noindent In conclusion, we obtain the next result.

\begin{theorem} \label{lelong}
Let $X$ be a projective surface endowed with a holomorphic foliation $\cF$.
We assume that the singularities of $\cF$ are reduced. 
Let $R$ be a diffuse closed positive (1,1) current on $X$, directed by $\cF$. Then there exists a constant $C> 0$ such that
$$\int_XR\wedge c_1\big(N_{\cF}(B)\big)\geq -C\sup_{x\in \cF_{\rm sing}}\nu(R, x).\leqno(69)$$
In particular, if we have $\nu(R, x)= 0$ for every $x\in \cF_{\rm sing}$, then we have 
$\int_XR\wedge c_1\big(N_{\cF}(B)\big)\geq 0$.
\end{theorem}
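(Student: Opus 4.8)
The plan is to deduce this statement formally from the two preceding results: Theorem \ref{supp} and the Lemma stated just above it (the one asserting $\int_X T\wedge\Theta\ge -C\sup_{x\in\cF_{\rm sing}}\nu(T,x)$ for closed positive currents $T$ directed by $\cF$ whose support meets $\cF_{\rm sing}$ only in non-degenerate points). All the analytic work has already been isolated in those statements, so the present theorem is essentially a bookkeeping step that glues them together.

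First I would apply Theorem \ref{supp}. Since $R$ is a diffuse, closed, positive $(1,1)$ current directed by $\cF$, and the singularities of $\cF$ are reduced, that theorem yields $\Supp(R)\subset X\setminus\{x_1,\dots,x_d\}$, where $\{x_1,\dots,x_d\}$ is the set of degenerate singularities of $\cF$. By property $(s_3)$ every point of $\cF_{\rm sing}$ is either non-degenerate or degenerate; combining this with the inclusion just obtained shows that $\cF_{\rm sing}\cap\Supp(R)$ consists of non-degenerate points only. This is exactly the hypothesis under which the preceding Lemma applies, with $T=R$.

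Second I would invoke that Lemma: it produces a constant $C>0$ and the (regularized) representative $\Theta$ of $c_1(N_{\cF,B})$ built in (60), such that $\int_X R\wedge\Theta\ge -C\sup_{x\in\cF_{\rm sing}}\nu(R,x)$. Since $R$ is closed, $\int_X R\wedge\Theta$ depends only on the de Rham class of $\Theta$, namely $c_1$ of the bundle $N_{\cF,B}$ from the exact sequence (49), which is the bundle written $N_{\cF}(B)$ in the statement; hence the left-hand side equals $\int_X R\wedge c_1\big(N_{\cF}(B)\big)$, and (69) follows. The last assertion is then immediate: if $\nu(R,x)=0$ for every $x\in\cF_{\rm sing}$, the right-hand side of (69) vanishes, so $\int_X R\wedge c_1\big(N_{\cF}(B)\big)\ge 0$.

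The step requiring genuine care is not in this deduction but in the two inputs: Theorem \ref{supp} rests on the Léau flower theorem and the holonomy-invariance of a closed directed current, while the Lemma rests on the mass estimate near a non-degenerate singular point. Within the present proof the only points to check are routine: that ``directed by $\cF$'' carries the meaning used in (61) (i.e.\ $R|_{U_\alpha}=\tau_\alpha\,\omega_\alpha\wedge\overline\omega_\alpha$ for a positive measure $\tau_\alpha$, with $\omega_\alpha$ the local trivialization (52) of $\det N^\star_{\cF,B}$), and that multiplying the local potentials $\xi_\alpha$ by the cut-offs $h_\alpha$ in (60) alters $\Theta$ only by an exact form, so it still represents $c_1\big(N_{\cF}(B)\big)$. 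I would also stress that diffuseness of $R$ is essential here: for a general closed positive directed current one would first have to split off the divisorial part in the Siu decomposition and control its intersection with $c_1\big(N_{\cF}(B)\big)$ separately, whereas for diffuse $R$ Theorem \ref{supp} removes that difficulty entirely.
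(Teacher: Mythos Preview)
Your proposal is correct and follows essentially the same approach as the paper's own proof: invoke Theorem \ref{supp} to exclude the degenerate singularities from the support of $R$, then apply the preceding Lemma (Lemma 6.12) at the remaining non-degenerate points, using that $\Theta$ represents $c_1$ of the bundle in question. Your write-up is simply more explicit about the logical glue (the use of $(s_3)$, the passage from $\Theta$ to the Chern class via closedness of $R$) and adds a helpful remark on the role of diffuseness, but the argument is the same.
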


\begin{proof}
By Theorem \ref{supp}, $R$ has no mass near degenerate points.
In the case of a non-degenerate singularity, we use Lemma 6.12, since $\Theta$ is a representative
of $c_{1}(N_{\cF}(-B))$.
The theorem is thus proved.
\end{proof}

\medskip

\noindent As a conclusion of the considerations in the preceding subsections, we infer the following
``parabolic version" of the result in \cite{McQ}.

\begin{corollary}\label{c6}
Let $(X, \cF)$ be a surface of general type endowed with a holomorphic foliation $\cF$, and let $f: \cY\to X$ be a holomorphic curve, where $\cY$ is a parabolic Riemann surface such that $\mathfrak{X}_\sigma (r)= o(T_f(r))$.
If $f$ is tangent to $\cF$, then the dimension of the Zariski closure of the image of $f$ is at most 1.
\end{corollary}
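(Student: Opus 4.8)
The plan is to run a McQuillan-type index argument on the pair $(X,\cF)$, after a preliminary reduction to the situation where $f(\cY)$ is Zariski dense, $\cF$ has reduced singularities, and $f$ has infinite area. So assume $f(\cY)$ is Zariski dense. If the area of $f$ were finite, then $T_{f,\omega}(r)=\cO(\log r)$ and the image of $f$ would be contained in an algebraic curve, contradicting density; hence we may assume the area is infinite, so that the Nevanlinna current $T[f]$ is defined and has total mass $1$. By Seidenberg's theorem pick a composition of blow-ups $\pi\colon\wh X\to X$ so that $\wh\cF:=\pi^{-1}(\cF)$ has reduced singularities; the exceptional divisor $\wh B$ is $\wh\cF$-invariant, $K_{\wh X}$ is still big (it is $\pi^\star K_X$ plus an effective exceptional divisor), the lift $\wh f\colon\cY\to\wh X$ is Zariski dense, tangent to $\wh\cF$ and of infinite area, and by $(s_4)$ one has $\pi^\star c_1(T_\cF)=c_1(T_{\wh\cF})$. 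By the projection formula it then suffices to derive a contradiction from $(\wh X,\wh\cF,\wh f)$.

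The first step is the orthogonality $\int_{\wh X}T[\wh f]\wedge c_1(T_{\wh\cF})=0$: the bound $\ge 0$ is Theorem \ref{taut}, while for $\le 0$ one uses that the section of $T_{\wh X}\otimes T_{\wh\cF}^\star$ defining $\wh\cF$ makes $H^0(\wh X,S^1T_{\wh X}\otimes T_{\wh\cF}^\star)\ne 0$, so Miyaoka's theorem (quoted after Theorem \ref{taut}) shows $T_{\wh\cF}^\star$ is pseudo-effective and nefness of $\{T[\wh f]\}$ finishes it. Feeding this into Theorem \ref{square} gives $\int_{\wh X}\{T[\wh f]\}^2=0$ and, for the Siu decomposition $T[\wh f]=\sum_j\tau_j[C_j]+R$, that the diffuse part $R$ is nef with $\nu(R,x)=0$ at every point — in particular at every singular point of $\wh\cF$.

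Consequently Theorem \ref{lelong} applies and yields $\int_{\wh X}R\wedge c_1\bigl(N_{\wh\cF}(\wh B)\bigr)\ge 0$, hence $\int_{\wh X}R\wedge c_1(N_{\wh\cF})\ge 0$ since $\{R\}$ is nef and $\wh B$ effective; the curves $C_j$ are $\wh\cF$-invariant, and $\{T[\wh f]\}^2=0$ forces $\{T[\wh f]\}\cdot[C_j]=0$, so their contribution is controlled exactly as in \cite{McQ}, giving $\int_{\wh X}T[\wh f]\wedge c_1(N_{\wh\cF})\ge 0$. Finally, from $0\to T_{\wh\cF}\to T_{\wh X}\to N_{\wh\cF}\to 0$ one has $c_1(T_{\wh\cF})+c_1(N_{\wh\cF})=-c_1(K_{\wh X})$, so the two facts combine into $\int_{\wh X}T[\wh f]\wedge c_1(K_{\wh X})\le 0$. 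But $\{T[\wh f]\}$ is a non-zero nef class (the current has mass $1$) and $K_{\wh X}$ is big, so writing a positive multiple of $K_{\wh X}$ as $A+E$ with $A$ ample and $E$ effective, the Hodge index theorem gives $\{T[\wh f]\}\cdot A>0$ while $\{T[\wh f]\}\cdot E\ge 0$ by nefness, whence $\int_{\wh X}T[\wh f]\wedge c_1(K_{\wh X})>0$ — a contradiction. Therefore $f(\cY)$ is not Zariski dense, i.e. the dimension of its Zariski closure is at most $1$.

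Since the genuinely hard inputs — the tangent-bundle inequality of Theorem \ref{taut}, the vanishing $\int\{T[f]\}^2=0$ with zero Lelong numbers of Theorem \ref{square}, and the normal-bundle estimate of Theorem \ref{lelong} — are already available, the remaining difficulty is mostly organizational, and the main obstacle I anticipate is the passage from the diffuse part $R$ to the full current $T[\wh f]$ in the normal-bundle inequality, i.e. handling the $\wh\cF$-invariant curves in the Siu decomposition (precisely the place where the regularity $\nu(R,\cdot)=0$ is exploited in McQuillan's argument); a secondary point requiring care is checking that bigness of $K$, Zariski density, infinite area, invariance of $\wh B$ and the identity $\pi^\star T_\cF=T_{\wh\cF}$ all genuinely survive the Seidenberg resolution.
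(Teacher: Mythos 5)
Your proposal follows the paper's own route essentially step for step: Seidenberg reduction to reduced singularities, the orthogonality $\int_X T[f]\wedge c_1(T_{\cF})=0$ via Theorem \ref{taut} and Miyaoka, then Theorem \ref{square} for $\int_X\{T[f]\}^2=0$ and the vanishing of the Lelong numbers of the diffuse part $R$, then Theorem \ref{lelong} for the normal bundle, and the final contradiction with the bigness of $K_X$ through $c_1(T_{\cF})+c_1(N_{\cF})=c_1(X)$. The one place where your argument deviates is the passage from $\int_X R\wedge c_1(N_{\cF}(-C))\ge 0$ to $\int_X T[f]\wedge c_1(N_{\cF})\ge 0$ --- exactly the step you flag as the anticipated obstacle. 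The mechanism you invoke, namely $\{T[f]\}\cdot[C_j]=0$ (which is indeed a correct consequence of $\{T[f]\}^2=0$ together with the nefness of $\{T[f]\}$ and $\{R\}$), is not what carries this step: the paper follows Brunella and uses, for each $\cF$-invariant curve $C_j$ in the Siu decomposition, the index formula $\int_{C_j}c_1(N_{\cF})=C_j^2+Z(C_j,\cF)$ together with the lower bound $Z(C_j,\cF)\ge\sum_{k\ne j}C_j\cdot C_k$, which gives $\sum_j\nu_j\int_{C_j}c_1(N_{\cF})\ge\int_X\bigl(T[f]-R\bigr)\wedge c_1(C)$; adding $\int_X R\wedge c_1(N_{\cF})$ and using nefness of $\{T[f]\}$ then yields $\int_X T[f]\wedge c_1(N_{\cF})\ge\int_X R\wedge c_1(N_{\cF}(-C))\ge 0$. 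So your outline is correct and all the hard inputs are correctly identified, but that intermediate inequality needs the invariant-curve index computation to be spelled out; the vanishing $\{T[f]\}\cdot[C_j]=0$ alone does not by itself control the sign of $\sum_j\nu_j\int_{C_j}c_1(N_{\cF})$.
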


\begin{proof}
The first thing to remark is that the hypothesis is stable by 
blow-up, hence we can assume that the singularities of $\cF$ are reduced. We can also assume that the current associated to $f$ can be written as
$$T[f]= \sum_j\nu_j[C_j]+ R$$
where $C:= C_1+\dots C_N$ verifies (a) and (b), and the Lelong numbers of
$R$ may be positive eventually at a finite subset of $X$. We argue by contradiction, so we add the hypothesis that the image of $f$ is Zariski dense.

If so, by Theorem \ref{taut} and Theorem \ref{square} we infer that we have $\displaystyle \int_XT[f]\wedge c_1(T_{\cF})= 0$, as well as 
$\displaystyle \int_X\{T[f]\}^2=0$ (we remark that the fact that the image of $f$ is Zariski dense is essential here).
This implies that the Lelong numbers of the diffuse part $R$ of $T[f]$ are equal to zero. By Theorem
\ref{lelong}, we infer that 
$$\int_XR\wedge c_1(N_{\cF}(-C))\geq 0.$$
The next step is to show the inequality
$$\int_XT[f]\wedge c_1(N_{\cF})\geq \int_XR\wedge c_1(N_{\cF}(-C)).$$
We follow the presentation in \cite{Brun1}, and we first show that 
we have $\displaystyle \int_X(T[f]-R)\wedge c_1(N_{\cF})\geq \int_X(T[f]-R)\wedge C.$ We recall the formula
$$\int_{C_j}c_1(N_{\cF})= \int_Xc_1(C_j)^2+ Z(C_j, \cF)$$
where $Z(C_j, \cF)$ is the multiplicity of the singularities of $\cF$ along the curve $C_j$.
We clearly have 
$$Z(C_j, \cF)\geq \sum_{j\neq k}c_1(C_j)\wedge c_1(C_k)$$
and thus we obtain
$$\sum_j\int_X\nu_jc_1(C_j)\wedge c_1(N_{\cF})\geq \sum_{j, k}\int_X\nu_jc_1(C_j)\wedge
c_1(C_k).$$
The preceding inequality implies that 
$$\int_XT[f]\wedge c_1(N_{\cF})\geq \int_XR\wedge c_1(N_{\cF})+ \int_X(T[f]- R)\wedge
c_1(C)$$
and given that $T[f]$ is nef, we infer that 
$$\int_XR\wedge c_1(N_{\cF})+ \int_X(T[f]- R)\wedge
c_1(C)\geq \int_XR\wedge c_1(N_{\cF}(-C)).$$
As a consequence we obtain 
$$\int_XT[f]\wedge c_1(N_{\cF})\geq 0.$$
Since $\displaystyle c_1(N_{\cF})+ c_1(T_{\cF})= c_1(X)$, 
the inequalities above imply that $\int_XT[f]\wedge c_1(K_X)\leq 0$. This is absurd since
$K_X$ is big and $T[f]$ is nef.
\end{proof}
\bigskip

\section{Brunella index theorem}

\noindent We will survey here a large part of the proof of 
the following beautiful result, due to M.~Brunella 
\cite{Brun1}. The context is as follows: $X$ is a compact complex surface, endowed with a holomorphic foliation $\cF$ and $f$ is a parabolic curve tangent to $\cF$. 

\begin{theorem} \label{index}
{\rm \cite{Brun1}}  
If the curve $f:\cY\to X$ has a Zariski dense image, then we have
$$\int_XT[f]\wedge c_1(N_{\cF})\geq 0.\leqno(70)$$
\end{theorem}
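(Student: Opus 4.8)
The plan is to reduce to the situation already prepared in the previous subsections and then run the Baum--Bott machinery. First I would remark, exactly as in the proof of Corollary \ref{c6}, that the hypotheses are stable under blow-up: by Seidenberg's theorem we may assume after finitely many monoidal transformations that the singularities of $\cF$ are reduced, and by property ($s_4$) the tangent bundle $T_{\cF}$ pulls back correctly while $c_1(N_{\cF})$ transforms by the usual formula $\pi^\star c_1(N_{\cF})=c_1(N_{\wh\cF})+\sum(\text{mult})\,c_1(E_i)$; combined with $\pi^\star T[f]=T[\wh f]+\sum\rho_j[E_j]$ and $\pi^\star c_1(T_{\cF})=c_1(T_{\wh\cF})$ this lets me compare intersection numbers downstairs and upstairs. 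If the image of $f$ is not Zariski dense there is nothing to prove for the statement as phrased in terms of the dichotomy, so I assume $f(\cY)$ is Zariski dense and, as in Theorem \ref{square}, write the Siu decomposition $T[f]=\sum_j\tau_j[C_j]+R$ with $R$ diffuse and directed by $\cF$.

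Next I would split the intersection number $\int_X T[f]\wedge c_1(N_{\cF})$ into the contribution of the algebraic part $\sum_j\tau_j[C_j]$ and that of the diffuse part $R$. For the algebraic part I use the elementary surface-geometry identity $\int_{C_j}c_1(N_{\cF})=\int_X c_1(C_j)^2+Z(C_j,\cF)$ together with $Z(C_j,\cF)\ge\sum_{k\ne j}c_1(C_j)\wedge c_1(C_k)\ge 0$, exactly the computation carried out in Corollary \ref{c6}; this shows the algebraic contribution is bounded below by an intersection-matrix term which, after absorbing it against a suitable invariant divisor $B=\sum C_j$, is non-negative (or at worst is compensated by moving to the logarithmic normal bundle $N_{\cF}(B)$). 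For the diffuse part $R$ the key input is Theorem \ref{lelong}: since by Theorem \ref{supp} the current $R$ puts no mass near the degenerate singularities, and by Theorem \ref{square} (applicable because $K_X$ is big for a general-type surface, but here we only need nefness of $T[f]$ and orthogonality, which is what makes the diffuse part have vanishing Lelong numbers) the Lelong numbers $\nu(R,x)$ vanish at every point, Theorem \ref{lelong} gives $\int_X R\wedge c_1(N_{\cF}(B))\ge 0$. Adding the two contributions and using $c_1(N_{\cF})=c_1(N_{\cF}(B))+c_1(B)$ together with the nefness of $T[f]$ (so $\int_X T[f]\wedge c_1(B)\ge\sum\tau_j\int_X[C_j]\wedge c_1(B)$, controlled by the same intersection matrix) yields $\int_X T[f]\wedge c_1(N_{\cF})\ge 0$.

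The main obstacle I expect is the bookkeeping at the reduction step: I must be sure that the blow-ups needed to reduce the singularities of $\cF$ to the reduced case do not destroy the hypothesis that the Lelong numbers of the diffuse part vanish, and that the negative boundary contributions $\rho_j^2$ coming from $\pi^\star T[f]=T[\wh f]+\sum\rho_j[E_j]$ stay summable — this is precisely the content of inequality (43) in the proof of Theorem \ref{taut}, and the truncated counting functions attached to the two ``extremal'' reduced singularities must be shown to tend to zero, so that no residual negative term survives. Once that convergence is in hand, the delicate analytic estimate is entirely contained in Lemma 6.12 (the mass bound $r\int_{C_1<\|z\|^r<C_2}\|z\|^{r-2}\sigma_T\le C\nu(R,0)$), which we may invoke; the remainder is the cohomological manipulation above. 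In summary, the proof is: reduce to reduced singularities; apply Theorem \ref{square} to kill Lelong numbers of $R$; apply Theorem \ref{lelong} to get $\int R\wedge c_1(N_{\cF}(B))\ge 0$; handle the algebraic part by the intersection-matrix inequality of Corollary \ref{c6}; reassemble using nefness of $T[f]$ and $c_1(N_{\cF})=c_1(N_{\cF}(B))+c_1(B)$.
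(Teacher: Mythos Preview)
Your approach has a genuine gap: you are invoking Theorem \ref{square} to conclude that the Lelong numbers of the diffuse part $R$ vanish, but that theorem carries the explicit hypothesis that $K_X$ is big. Brunella's Theorem \ref{index} makes no such assumption on $X$; it is stated for an arbitrary compact complex surface carrying a foliation. Your parenthetical remark that ``here we only need nefness of $T[f]$ and orthogonality'' does not save the argument: the orthogonality $\int_X T[f]\wedge c_1(T_{\cF})=0$ in equation (45) is itself obtained by combining Theorem \ref{taut} with Miyaoka's pseudo-effectivity result, and the latter also requires $K_X$ big. Without that hypothesis you have neither the orthogonality nor the vanishing of $\int_X\{T[f]\}^2$, hence no control on $\nu(R,x)$, and Theorem \ref{lelong} gives you nothing.

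In fact the paper does not prove Theorem \ref{index} at all: it is quoted from \cite{Brun1}, and immediately after the statement the authors remark that the similar inequality obtained in the preceding subsection ``was obtained in a very indirect way, under the assumption that the canonical bundle of $X$ is big (and as part of an argument by contradiction\dots)''. They then say explicitly that Brunella's own arguments ``involve many considerations from dynamics, including a study at Siegel points of $\cF_{\rm sing}$''. So what you have reconstructed is essentially the chain of reasoning inside Corollary \ref{c6}, which is valid only under the additional hypothesis $K_X$ big and, even then, only as a step in a proof by contradiction (since that hypothesis ultimately forces $f$ not to be Zariski dense). To prove Theorem \ref{index} in the stated generality one needs Brunella's dynamical analysis near Siegel singularities, which is not reproduced in the present paper.
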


\noindent We remark that a similar statement 
appeared 
in the preceding subsection, but it was obtained in a very indirect way,
under the assumption that the canonical bundle of $X$ is big
(and as part of an argument by contradiction assuming that the image of
$f$ is Zariski dense...). 

M. Brunella's arguments involve many considerations from dynamics, including a 
study at Siegel points of $\cF_{\rm sing}$.
They seem difficult to 
accommodate to higher dimensional case; in the next lemma we treat the Siegel points by using 
the following elegant lemma for which we are indebted to M. McQuillan, cf.\ \cite{MQ}.

\begin{lemma}\cite{MQ}\label{trivial}
Let $T$ be a closed positive current on a surface $X$. We assume that 
$T$ is diffuse and directed by a foliation $\cF$. Let 
$x_0\in \cF_{\rm sing}$ be a singular point of $\cF$. We assume that there exists a coordinate open set 
$U\subset X$ centered at $x_0$ and local coordinates $(z, w)$ on $U$ such that the restriction
$\cF|_U$ is given by $adw- bdz= 0$, where 
$$
a(z, w)= z(1+ o(z, w)), \quad b(z, w)= \lambda w(1+ o(z, w))
$$
where $\lambda\in \bR$ is a strictly negative real number.
Then $\nu(T, x_0)= 0$, i.e. the Lelong number of $T$ at $x_0$ vanishes.
\end{lemma}

\begin{proof} Let $\ep> 0$ be a positive number, and let $\Omega_\ep$ be the open set 
$(|z|< \ep)\times (|w|<\ep)\subset \bC^2$. We denote by $\partial \Omega_\ep$ the boundary of this domain.
We proceed in two steps, as follows.
\smallskip

\noindent {\rm (I)} There exists a constant $C> 0$ such that 
$$\nu(T, x_0)\leq C\int_{\partial\Omega_\ep}d^c\log |zw|^2\wedge T.$$

\noindent (II) We have $\displaystyle \lim_{\ep\to 0}\int_{\partial\Omega_\ep}d^c\log |zw|^2\wedge T= 0.$
\smallskip

\noindent The lemma follows from the two statements (I) and (II) above; we will show that they hold true in 
what follows. 
   
\noindent {\sl Proof of }(I). Let $\ep> \delta> 0$ be two positive real numbers; we have 
\begin{equation} 
\begin{split}
\nu(T, x_0)\leq & \int_{|z|^2+ |w|^2< \delta^2}dd^c\log\left(|z|^2+ |w|^2\right)\wedge T \\ 
= & \frac{1}{\delta^2}
\int_{|z|^2+ |w|^2< \delta^2}dd^c\left(|z|^2+ |w|^2\right)\wedge T\\
\leq & \frac{1}{\delta^2}\int_{|z|< \delta,  |w|< \delta}dd^c\left(|z|^2+ |w|^2\right)\wedge T\\
= & \frac{1}{\delta^2}\left(\int_{|z|= \delta,  |w|< \delta}d^c |z|^2\wedge T+ 
\int_{|z|< \delta,  |w|= \delta}d^c |w|^2\wedge T\right)\\
+ & \frac{1}{\delta^2}\left(\int_{|z|= \delta,  |w|< \delta}d^c |w|^2\wedge T+ 
\int_{|z|< \delta,  |w|= \delta}d^c |z|^2\wedge T\right).
\end{split}
\nonumber
\end{equation}

\noindent By using the relation 
$$\displaystyle d^c|w|^2\wedge T= |w|^2(\lambda^{-1}+ o(z,w))d^c\log|z|^2\wedge T\leqno(\star)$$ 
(which is a consequence of the 
hypothesis), we have 
$$\int_{|z|= \delta,  |w|< \delta}d^c |w|^2\wedge T
= \int_{|z|= \delta,  |w|< \delta}|w|^2(\lambda^{-1}+ o(z,w))d^c\log|z|^2\wedge T$$
so in particular the limit as $\delta\to 0$ of the term
$$\frac{1}{\delta^2}\left(\int_{|z|= \delta,  |w|< \delta}d^c |w|^2\wedge T+ 
\int_{|z|< \delta,  |w|= \delta}d^c |z|^2\wedge T\right)
$$
exists, and it is bounded by the limit of
$$I_\delta:= \frac{1}{\delta^2}\left(\int_{|z|= \delta,  |w|< \delta}d^c |z|^2\wedge T+ 
\int_{|z|< \delta,  |w|= \delta}d^c |w|^2\wedge T\right)$$
as $\delta\to 0$. We have 
\begin{equation} 
\begin{split}
I_\delta= & \int_{|z|= \delta,  |w|< \delta}d^c \log|z|^2\wedge T+ \int_{|z|< \delta,  |w|= \delta}d^c \log|w|^2\wedge T\\
\leq & \int_{|z|= \delta,  |w|< \ep}d^c \log|z|^2\wedge T+ \int_{|z|< \ep,  |w|= \delta}d^c \log|w|^2\wedge T\\
:= & I_\delta(\ep, 1)+ I_\delta(\ep, 2).
\end{split}
\nonumber
\end{equation}
By Stokes theorem we obtain
\begin{equation} 
I_\delta(\ep, 1)= \int_{|z|= \ep,  |w|< \ep}d^c \log|z|^2\wedge T
+ \int_{\delta< |z|<\ep,  |w|= \ep}d^c \log|z|^2\wedge T
\nonumber
\end{equation}
and thus we infer that 
\begin{equation} 
\lim_{\delta\to 0}\left(I_\delta(\ep, 1)+ I_\delta(\ep, 2)\right)\leq C\int_{\partial\Omega_\ep}d^c\log |zw|^2\wedge T.
\nonumber
\end{equation}
The proof of the point (I) is finished by combining the relations above.
\smallskip

\noindent {\sl Proof of }(II). We use the relation 
\begin{equation} 
\frac{dw}{w}\wedge T= (\lambda+ O(z,w))\frac{dz}{z}\wedge T
\nonumber
\end{equation}
and we obtain 
\begin{equation} 
\int_{\partial \Omega_\ep}d^c\log|w|^2\wedge T= \lambda \int_{\partial \Omega_\ep}d^c\log|z|^2\wedge T+ O(\ep)
\nonumber
\end{equation}
so we have 
\begin{equation} 
\int_{\partial \Omega_\ep}d^c\log|zw|^2\wedge T=  \lambda\int_{\partial \Omega_\ep}d^c\log|z|^2\wedge T+ 
\lambda^{-1}\int_{\partial \Omega_\ep}d^c\log|w|^2\wedge T+ O(\ep).
\nonumber
\end{equation}
The conclusion follows, since $\lambda$ is a negative real number and the 
left hand side term of the preceding equality is positive.
\end{proof}
\medskip

\noindent For the rest of the proof of Theorem \ref{index} we refer 
to \cite{Brun1}: it consists in a case-by-case analysis, as follows. By the proof of 
Corollary \ref{c6} it is enough to show that we have 
$$\int_XR\wedge c_1\left(N_{\cF}(-B)\right)\geq 0,$$
where $R$ is the diffuse part of $T[f]$. Theorem \ref{lelong} shows that this 
quantity is bounded from below by the Lelong numbers of $R$ at the singular points of 
the foliation $\cF$. If $x_0\in \cF_{\rm sing}$ is a linearizable singularity, then his 
contribution to the quantity $\displaystyle 
\int_XR\wedge c_1\left(N_{\cF}(B)\right)$ can be seen to be positive by 
a direct computation. If $x_0$ is degenerate, then we have nothing to do, 
thanks to Theorem \ref{supp}. The remaining case follows from Lemma \ref{trivial}:
since the Lelong number is equal to zero, the result follows.\qed 

\medskip
\noindent
We will establish here a generalization of of Theorem \ref{index} in the context of symmetric differentials. 
\begin{corollary}
Let $L$ be a line bundle on $X$, such that 
$$H^0(X, S^mT^\star_X\otimes L)\neq 0.$$
Then we have
$$\int_XT[f]\wedge c_1(L)\geq 0.\leqno(71)$$
\end{corollary}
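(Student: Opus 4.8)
The plan is to reduce the statement to Brunella's theorem (Theorem \ref{index}) by the same mechanism already used several times in this paper: a non-zero section of $S^mT^\star_X\otimes L$ produces, on a suitable projectivized tower over $X$, a foliated structure along $f$, and the degree inequality for the normal bundle of that foliation translates back into the desired inequality for $c_1(L)$. First I would observe that a non-zero section $\omega\in H^0(X,S^mT^\star_X\otimes L)$ defines a hypersurface $Z\subset \bP(T_X)$ of relative degree $m$, namely the zero locus of the induced section of $\cO_{\bP(T_X)}(m)\otimes\pi^\star L$, where $\pi:\bP(T_X)\to X$; this is exactly the ``tautological'' viewpoint recalled in subsection 2.2. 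The lift $f_1:\cY\to\bP(T_X)$ of $f$ is tangent to a one-dimensional foliation $\cG$ on $\bP(T_X)$ (the canonical lift of $\cF$), and the key point is to compare $T_{\cG}$, $N_{\cG}$ with the pulled-back data on $X$.

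The next step is bookkeeping with Chern classes on $\bP(T_X)$. Pulling back the exact sequence $0\to T_\cF\to T_X\to N_\cF\to 0$ and using that $f_1$ is tangent to $\cG$, one has $c_1(T_{\cG})=\pi^\star c_1(T_\cF)-c_1(\cO(-1))$ away from the indeterminacy, hence $c_1(N_{\cG})=\pi^\star c_1(X)-c_1(T_{\cG})=\pi^\star c_1(N_\cF)+c_1(\cO(-1))$ modulo the exceptional contributions coming from $\cF_{\rm sing}$. Applying Theorem \ref{index} (in its natural generalization to the foliated surface-like situation, or by iterating: after blowing up to reduce singularities, $\bP(T_X)$ is a threefold, so one in fact wants the version of the normal-bundle inequality valid in the relevant codimension — here I would instead work directly on $X$ whenever $m=1$, $L=N_\cF$, and bootstrap) gives $\int T[f_1]\wedge c_1(N_{\cG})\ge0$. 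Since $\pi_\star T[f_1]=T[f]$ by Remark 6.9 and the tautological inequality (Lemma \ref{trivia}) gives $\langle T[f_1],\Theta_h(\cO(-1))\rangle\ge0$, one extracts $\int_X T[f]\wedge c_1(N_\cF)+(\text{nonneg.})\ge0$; iterating the construction $m$ times (forming the $m$-th symmetric power, or equivalently descending through the zero divisor $Z$ of $\omega$ which carries the lifted curve) converts the single-power statement into the statement for $S^mT^\star_X\otimes L$. Concretely: since $f_1(\cY)\subset Z$ whenever $\omega(f',\dots)\not\equiv0$ is violated, and otherwise by the vanishing Theorem \ref{t1}-type reasoning $\omega$ restricts to give a foliated structure with $T_\cG^\star$ a quotient of $\cO_Z(1)^{\otimes\frac1m}\otimes L$, one reads off $\int_X T[f]\wedge c_1(L)\ge \frac1m\int_Z T[f_1|_Z]\wedge c_1(\cO_Z(1))\ge0$ using the tautological inequality once more.

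The main obstacle I expect is the treatment of the base locus of $\omega$ and the singularities of the induced foliation on the projectivized space: the section $\omega$ may vanish on a divisor or along fibers, $Z$ need not be smooth, and the lift $f_1$ may hit $Z_{\rm sing}$, so one must either perform a principalization/resolution (as in Theorem \ref{t4} and the proof of Theorem \ref{taut}) and control the resulting exceptional counting terms, or invoke the fact that the negative contributions are truncated counting functions which, by the convergence-of-Lelong-numbers argument in Theorem \ref{taut}, are absorbed in the limit. A secondary subtlety is that Brunella's Theorem \ref{index} is stated for $f:\cY\to X$ with $X$ a surface, so applying it on $\bP(T_X)$ (a threefold) is not literally legitimate; the honest route is to stay on $X$, use Theorem \ref{index} for $m=1$, $L=N_\cF$, and handle general $(m,L)$ by an explicit Chern-class manipulation on $Z\subset\bP(T_X)$ together with the projection formula and Lemma \ref{trivia} — the arithmetic there, distributing $c_1(\cO(1))$ and the ramification/singularity terms correctly, is the part that requires genuine care rather than routine computation. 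One then closes the argument exactly as in Corollary \ref{c6}: the inequality $\int_X T[f]\wedge c_1(L)\ge0$ is what remains after all nonnegative error terms are discarded.
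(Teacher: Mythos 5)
You have gathered the right ingredients --- the divisor $Z=(u=0)\subset\bP(T_X)$ of class $m\,c_1(\cO(1))+\pi^\star c_1(L)$, the vanishing theorem forcing the lift $f_1$ into $Z$ when the conclusion fails, the tautological inequality of Lemma \ref{trivia}, and Brunella's Theorem \ref{index} --- but the assembly has a genuine gap. You correctly observe that Theorem \ref{index} is a statement about surfaces and cannot be invoked on the threefold $\bP(T_X)$, yet your proposed remedy (``stay on $X$ and do Chern-class manipulations'') is never carried out and cannot work as described: the Chern-class bookkeeping for $N_{\cG}$ on $\bP(T_X)$ is a dead end. The missing idea is that the irreducible component $\Gamma_1$ of $Z$ containing $f_1(\cY)$ is itself a \emph{surface}: desingularizing it produces a projective surface $X_m$ carrying a foliation $\cF_m$, a generically finite morphism $p:X_m\to X$, and a lift $f_m$ with $f=p\circ f_m$. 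Writing $p^\star(u)=0$ as $\sum_j m_j\Gamma_j$ with $\Gamma_j\equiv\nu_j\,c_1(\cO(1))+\pi_m^\star L_j$, one identifies $L_1$ with $N_{\cF_m}$, so Theorem \ref{index} applies \emph{on the surface $X_m$} and gives $\int T[f_m]\wedge c_1(L_1)\geq 0$; for $j\geq 2$ the lifted curve is not contained in $\Gamma_j$, hence $\int T[f_m^\prime]\wedge c_1(\Gamma_j)\geq 0$, which combined with the tautological inequality yields $\int T[f_m]\wedge c_1(L_j)\geq 0$; summing against $p^\star L\equiv\sum_j m_jL_j$ and using the projection formula concludes. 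Without passing to this foliated surface you have no legitimate occurrence of Brunella's theorem anywhere in the argument.

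There is also a sign error that prevents your final chain from closing: Lemma \ref{trivia} gives $\int T[f_1]\wedge c_1(\cO(-1))\geq 0$, i.e.\ $\int T[f_1]\wedge c_1(\cO(1))\leq 0$, so it cannot be ``used once more'' to conclude $\int_Z T[f_1|_Z]\wedge c_1(\cO_Z(1))\geq 0$ as your last display claims. In the correct argument the tautological inequality is used with its actual sign, precisely to convert the nonnegativity of the degree of the lifted curve on the effective divisors $\Gamma_j$, $j\geq 2$, into the nonnegativity of $\int T[f_m]\wedge c_1(L_j)$ after subtracting the (nonpositive) $\nu_j\,\cO(1)$-contribution; it never produces a nonnegative $\cO(1)$-degree.
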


\begin{proof} Observe that Brunella's result corresponds to the case $m=1$ and $L= N_{\cF}$.
Indeed, if the inequality (71) above does not hold, then given a section $u$ of 
$S^mT^\star_X\otimes L$ we have $u(f^{\prime \otimes m})\equiv 0$, by the vanishing theorem.
Let $\Gamma\subset \bP(T_X)$ be the set of zeros of $u$, interpreted as a section of 
the bundle $\cO(m)\otimes \pi^\star (L)$, where $\pi: \bP(T_X)\to X$ is the natural projection.
The natural lift of the curve $f$ is contained in one of the irreducible components of $\Gamma$;
let $X_m$ be a desingularization of the component containing the image of $f$.

In conclusion, we have a projective surface $X_m$, endowed with a holomorphic foliation 
$\cF_m$, a generically finite morphism $p: X_m\to X$, and a curve $f_m: \cY\to X_m$, such that
$f= p \circ f_m$.

Let $u_m:= p^\star(u)$ be the section of $\displaystyle S^mT^\star_{X_m}\otimes p^\star (L)$
obtained by taking the inverse image of $u$.
We decompose the set $Z_m: =(u_m=0)\subset \bP(T_{X_m})$ as follows
$$Z_m= \sum_j m_j\Gamma_j\leqno(72)$$
where $\Gamma_j\subset \bP(T_{X_m})$ are irreducible hypersurfaces, and $m_j$ are positive integers. 

The image of the lift 
of $f_m$ to $\bP(T_{X_m})$ is contained in $Z_m$, and it is equally contained in the graph
of $\cF_m$. Since the curve $f_m$ is supposed to be Zariski dense, its lift can be contained in 
at most one hypersurface $\Gamma_j$.
We will henceforth assume that the graph of the foliation $\cF_m$ coincides with $\Gamma_1$.

Numerically, we have
$$\Gamma_j\equiv \nu_j\cO(1)+ \pi_m^\star(L_j)\leqno(73)$$
where $\pi_m: \bP(T_{X_m})\to X_m$ is the projection, $\nu_j$ are positive integers, and $L_j$ are line bundles on $X_m$. This is a consequence of the structure of the Picard group of 
$\bP(T_{X_m})$.

The relations (72), (73) show that we have
$$p^\star (L)\equiv \sum m_j L_j.$$
Since the lift of the curve $f_m$ is not contained in any $\Gamma_j$ for $j\geq 2$, we have
$\displaystyle \int_XT[f_m^\prime]\wedge c_1(\Gamma_j)\geq 0$. The tautological inequality
\ref{trivia}
states that $\displaystyle \int_XT[f_m^\prime]\wedge c_1\big({\cO}(1)\big)\leq 0$ 
so we obtain from (73)
$$\int_{X_m}T[f_m]\wedge c_1(L_j)\geq 0\leqno(74)$$
for each index $j\geq 2$. 

But we have assumed that (71) does not hold, so we infer that
$$\int_{X_m}T[f_m]\wedge c_1(L_1)< 0\leqno(75)$$ 
which contradicts Theorem \ref{index} (because $L_{1}$ is the normal bundle of $\cF_{m}$).
\end{proof}
\smallskip

\noindent In particular, this gives a proof of the Green-Griffiths conjecture for minimal general type surfaces 
$X$ such that $c_1^2(X)> c_2(X)$ without using any consideration about the tangent bundles of foliations! Indeed, under this hypothesis we have 
$$H^0(X, S^mT^\star_X\otimes A^{-1})\neq 0$$
where $A$ is ample, and $m\gg 0$ is a positive integer (by a result due to F. Bogomolov, \cite{Bog}; it is at this point that we are using the hypothesis about the Chern classes of $X$). 
So if $f$ is Zariski dense, we obtain a contradiction by the 
corollary above, given the strict positivity of $A$. 
\smallskip

\begin{rem}{\rm
As we see, the proof is by contradiction. However,
by Bogomolov \cite{BoGo} theorem there are only a finite number of 
elliptic or rational curves if $c_1^2> c_2$. Thus there exists a fixed sub-variety of $X$ containing all the images of $\bC$. }
\end{rem}  

\noindent 
Given the aforementioned consequence of Brunella's theorem, it is very tempting to formulate the following problem.

\begin{conjecture}
Let $k, m$ be two positive integers, and let $L\to X$ be a line bundle, such that
$$H^0\big(X, E_{k, m}T^\star_X\otimes L\big)\neq 0.$$
If $f: \cY\to X$ is a Zariski dense parabolic curve such that $\mathfrak{X}_\sigma (r)= o(T_f(r))$, then we have
$$\int_XT[f]\wedge c_1(L)\geq 0.\leqno(76)$$
\end{conjecture}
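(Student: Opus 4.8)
The plan is to argue by contradiction, assuming $\int_X T[f]\wedge c_1(L)<0$, and to run one stage higher in the Demailly--Semple tower the same strategy that produced the Corollary following Theorem \ref{index}. First I would fix a non-zero section $u\in H^0(X,E_{k,m}T^\star_X\otimes L)$ and reinterpret it, via $(\pi_k)_\star\cO_{X_k}(m)=E_{k,m}T^\star_X$, as a section $\wt u$ of the line bundle $\cO_{X_k}(m)\otimes\pi_k^\star L$ on the $k$-th stage $X_k$ of the tower built from $(X,T_X)$. The lift $f_k:\cY\to X_k$ is tangent to $V_k$, and its Zariski closure $W\subset X_k$ surjects onto $X$ since $f$ is Zariski dense.

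The first step is the vanishing statement: under $\int_X T[f]\wedge c_1(L)<0$ one has $\wt u(j_k(f))\equiv 0$, i.e. $f_k(\cY)\subset Z:=(\wt u=0)$, hence $W\subset Z$. This is the analogue of Theorem \ref{t7}, the only difference being that $A:=L^{-1}$ need not be ample. But the proof of Theorem \ref{t7} uses only that the characteristic function $T_{f,\Theta_h(A)}(r)$ grows fast compared with $\log r+\mathfrak{X}_\sigma(r)$: since $T[f]$ is closed and $\int_X T[f]\wedge c_1(A)>0$, one may choose a smooth metric $h$ on $A$ and a sequence $r_\nu\to\infty$ along which $T_{f,\Theta_h(A)}(r_\nu)\geq\delta\,T_{f}(r_\nu)$ for some $\delta>0$; then inequality (20) combined with the logarithmic derivative lemma (Theorem \ref{t5}) gives $T_{f,\Theta_h(A)}(r_\nu)\leq C\big(\log T_f(r_\nu)+\log r_\nu+\mathfrak{X}_\sigma(r_\nu)\big)$, which is absurd because $\mathfrak{X}_\sigma(r)=\cO(\log r)$ and the area of $f$ is infinite. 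So $\wt u$ vanishes on $f_k(\cY)$.

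Next I would write $Z=\sum_j m_j\Gamma_j$ and observe that, $f_k(\cY)$ being Zariski dense in $W$, it lies in exactly one component, say $\Gamma_1$, so $W\subset\Gamma_1$. Let $p:X'\to\Gamma_1$ be a desingularization and $f':\cY\to X'$ the induced curve. Using the structure of the Picard groups along the tower one obtains a numerical decomposition on $X'$, paralleling (72)--(75) of the case $k=1$, of the form $p^\star\pi_k^\star L\equiv\sum_j a_j\,\zeta_j+\pi'^\star(L_1)$, where each $\zeta_j$ is the pull-back to $X'$ of a tautological class $c_1(\cO_{X_j}(1))$ and $a_j\geq0$. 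The tautological inequality of Lemma \ref{trivia} and its iterates along the stages of the tower (valid here since the area is infinite and $\mathfrak{X}_\sigma(r)=\cO(\log r)$) give $\int_{X'}T[f']\wedge\zeta_j\leq0$, so that $\int_{X'}T[f']\wedge\pi'^\star c_1(L_1)=\int_X T[f]\wedge c_1(L)-\sum_j a_j\int_{X'}T[f']\wedge\zeta_j<0$ by the projection formula. One is thus reduced to proving $\int_{X'}T[f']\wedge c_1(L_1)\geq0$, i.e. the analogue of the inequality we want, one stage lower, with $L_1$ playing the role of a normal bundle of the foliation by curves cut out on $X'$ by $\Gamma_1$.

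The main obstacle is exactly this final reduction. When $k=1$ and $\dim X=2$, $X'$ is a surface with a genuine one-dimensional foliation, and one may invoke Seidenberg's reduction of singularities together with Brunella's index theorem (Theorem \ref{index}), whose proof rests on the Leau flower theorem and a holonomy analysis at Siegel points, to conclude; the argument above is precisely that induction. In the general case $W$ may have dimension $\geq3$, there is no reduction of singularities for foliations by curves in dimension $\geq3$, and no substitute for the flower-theorem dynamics is known, so the induction cannot be closed by these means. A proof would seem to require either a genuinely higher-dimensional index theorem for Nevanlinna currents directed by a foliation, or some other mechanism on $X'$ (e.g. an Ahlfors--Schwarz negative-curvature or algebraicity argument) that bypasses the surface dynamics; everything before this last step — the vanishing reduction, the passage to $X'$, and the tautological estimates — carries over verbatim to parabolic source curves of finite Euler characteristic, exactly as in the body of the paper.
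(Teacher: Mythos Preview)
The statement you are addressing is a \emph{conjecture} in the paper, not a theorem: the paper offers no proof, and immediately after stating it remarks that a proof would imply the Green--Griffiths conjecture for surfaces of general type. There is therefore no argument in the paper to compare against.

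Your proposal is not a proof either, and to your credit you say so yourself. The first step---extending the vanishing theorem to a non-ample $L$ under the hypothesis $\int_X T[f]\wedge c_1(L)<0$---is sound: the proof of Theorem~\ref{t7} only needs $T_{f,-\Theta_h(L)}(r)$ to dominate $\log r+\mathfrak{X}_\sigma(r)$ along a suitable sequence, and your contradiction argument supplies exactly that. This is the same mechanism the paper exploits in the corollary following Theorem~\ref{index}.

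The attempt to mimic the decomposition (72)--(75) at stage $k$, however, already contains imprecisions beyond the obstacle you flag. The component $\Gamma_1\subset X_k$ is a hypersurface of dimension $n+k(n-1)-1$, and it is not clear what you mean by ``$L_1$ playing the role of a normal bundle of the foliation by curves cut out on $X'$'': the distribution $V_k|_{\Gamma_1}$ has rank $n$, not $1$, so there is no canonical foliation by curves on $X'$ to which an index theorem could even be addressed. Nor is it clear that the coefficients $a_j$ in your numerical decomposition are non-negative, which you need for the tautological inequalities to go the right way; in the $k=1$ case this comes for free from the Picard group of a projective bundle, but on $X_k$ the effective cone is more complicated.

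The genuine obstruction, which you identify correctly, is that Brunella's Theorem~\ref{index} is a two-dimensional statement whose proof rests on Seidenberg reduction and on the local dynamics of holonomy maps, neither of which has a known analogue in dimension $\geq 3$. This is precisely why the paper leaves the statement open. Your write-up is a fair account of what carries over from the $k=1$ case and where the induction breaks, but it should be presented as a discussion of the difficulty rather than as a proof proposal.
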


\noindent This would imply the Green-Griffiths conjecture for surfaces of general type, since in this case it is known that for any ample line bundle $A$ there exists $k\gg m\gg0$ such that 
$H^0\big(X, E_{k, m}T^\star_X\otimes A^{-1}\big)\neq 0$.


\bigskip



\end{document}